\begin{document}

\newcommand{\eps}{\varepsilon}

\newcommand{\bei}{\begin{itemize}}
\newcommand{\eei}{\end{itemize}}
\newcommand{\beq}{\begin{equation}}
\newcommand{\eeq}{\end{equation}}
\newcommand{\beqr}{\begin{eqnarray}}
\newcommand{\eeqr}{\end{eqnarray}}
\newcommand{\beqrn}{\begin{eqnarray*}}
\newcommand{\eeqrn}{\end{eqnarray*}}
\newcommand{\brr}{\begin{array}}
\newcommand{\err}{\end{array}}
\newcommand{\bef}{\begin{figure}}
\newcommand{\eef}{\end{figure}}
\newcommand{\al}{\alpha}
\newcommand{\ga}{\gamma}

\newcommand{\del}{\delta}
\newcommand{\sig}{\sigma}

\newcommand{\Tau}{\mbox{\rm T}}

\newcommand{\disju}{\dot{\bigcup}}
\newcommand{\eqdf}{\stackrel{\mathrm{def}}{=}}
\newcommand{\Diag}{\mbox{Diag}}

\newcommand{\barA}{\overline{\mathcal{A}}}
\newcommand{\A}{\mathcal{A}}
\newcommand{\K}{\mathcal{K}}
\newcommand{\Si}{\mathcal{S}}
\newcommand{\Z}{\mathcal{Z}}
\newcommand{\B}{\mathring{\mathcal{B}}}
\newcommand{\E}{\mathcal{E}}
\newcommand{\G}{\mathcal{G}}
\newcommand{\zlj}{Z_{\ell, j}}
\newcommand{\Lp}{\mathcal{L}}
\newcommand{\ppi}{\mathcal{P}_\pi}
\newcommand{\hDelta}{\mathring{\Delta}}
\newcommand{\dlj}{\Delta_{\ell,j}}
\newcommand{\hdlj}{\hDelta_{\ell,j}}
\newcommand{\hlj}{H_{\ell,j}}
\newcommand{\wslj}{\W^s_{\ell,j}}
\newcommand{\wulj}{\W^u_{\ell,j}}
\newcommand{\F}{\mathring{F}}
\newcommand{\tF}{\tilde{F}}
\newcommand{\FN}{\overline{F}_N}
\newcommand{\T}{\mathring{T}}
\newcommand{\hf}{\mathring{f}}
\newcommand{\I}{\hat{I}}
\newcommand{\X}{\hat{X}}
\newcommand{\V}{\mathcal{V}}
\newcommand{\M}{\mathcal{M}}
\newcommand{\hM}{\mathring{M}}
\newcommand{\W}{\mathcal{W}}
\newcommand{\Ho}{\mathcal{H}}
\newcommand{\Q}{\mathcal{Q}}
\newcommand{\J}{\mathcal{J}}
\newcommand{\N}{\mathbb{N}}
\newcommand{\Na}{\mathcal{N}}
\newcommand{\R}{\mathbb{R}}
\newcommand{\ff}{\mathcal{F}}
\newcommand{\tQ}{\tilde{\Q}}
\newcommand{\ab}{\bar{a}}
\newcommand{\ve}{\varepsilon}
\newcommand{\vf}{\varphi}
\newcommand{\bvf}{\overline{\varphi}}
\newcommand{\bpsi}{\overline{\psi}}
\newcommand{\barF}{\overline{F}}
\newcommand{\bDelta}{\overline{\Delta}}
\newcommand{\bLp}{\overline{\Lp}}
\newcommand{\bpi}{\overline{\pi}}
\newcommand{\bm}{\overline{m}}
\newcommand{\bmu}{\overline{\mu}}
\newcommand{\bnu}{\overline{\nu}}
\newcommand{\bareta}{\overline{\eta}}
\newcommand{\brho}{\overline{\rho}}
\newcommand{\bpa}{\overline{\pa}}
\newcommand{\bp}{\overline{P}}
\newcommand{\bdelta}{\overline{\delta}}
\newcommand{\bH}{\overline{H}}
\newcommand{\tm}{\tilde{m}}
\newcommand{\hm}{\hat{m}}
\newcommand{\tG}{\tilde{\G}}
\newcommand{\tf}{\tilde{f}}
\newcommand{\tphi}{\tilde{\varphi}}
\newcommand{\tpsi}{\tilde{\psi}}
\newcommand{\tP}{\tilde{P}}
\newcommand{\tH}{\tilde{H}}
\newcommand{\td}{\tilde{d}}
\newcommand{\tx}{\tilde{x}}
\newcommand{\ty}{\tilde{y}}
\newcommand{\tnu}{\tilde{\nu}}
\newcommand{\hnu}{\hat{\nu}}
\newcommand{\hmu}{\hat{\mu}}
\newcommand{\tmu}{\tilde{\mu}}
\newcommand{\tT}{\tilde{T}}
\newcommand{\heta}{\hat{\eta}}
\newcommand{\teta}{\tilde{\eta}}
\newcommand{\accim}{{\em a.c.c.i.m.}}
\newcommand{\diam}{\mbox{diam}}
\newcommand{\pa}{\mathcal{P}}
\newcommand{\tpa}{\tilde{\mathcal{P}}}
\newcommand{\lip}{\mbox{\tiny Lip}}
\newcommand{\loc}{\mbox{\scriptsize loc}}

\newcommand{\flag}[1]{\textbf{******[#1]******}}
\newcommand{\ra}{\vartheta}

\newtheorem{theorem}{Theorem}[section]
\newtheorem{thm}{Theorem}
\newtheorem{lemma}[theorem]{Lemma}
\newtheorem{sublemma}[theorem]{Sublemma}
\newtheorem{convention}[theorem]{Convention}
\newtheorem{proposition}[theorem]{Proposition}
\newtheorem{remark}[theorem]{Remark}
\newtheorem{definition}[theorem]{Definition}
\newtheorem{corollary}[theorem]{Corollary}
\newtheorem{assumption}{Assumption}
\newtheorem{claim}{Claim}[section]
\setlength{\unitlength}{1cm}

\title{Escape Rates and Physically Relevant Measures for Billiards with
Small Holes}
\author{Mark Demers\thanks{Department of Mathematics and Computer Science,
Fairfield University.  Email:  mdemers@mail.fairfield.edu.  This
research is partially supported by NSF grant DMS-0801139.} \and Paul
Wright\thanks{Department of Mathematics, University of Maryland.
Email:  paulrite@math.umd.edu.  This research is partially supported
by an NSF Mathematical Sciences Postdoctoral Research Fellowship.  
This author would also like to thank The Courant Institute of
Mathematical Sciences, New York University, where he was affiliated when
this project began.}
\and Lai-Sang Young\thanks{Courant Institute of Mathematical
Sciences, New York University. Email: lsy@cims.nyu.edu. This
research is partially supported by a grant from the NSF.
\newline  The authors would like to thank MSRI, Berkeley, and ESI, Vienna, where
part of this work was carried out.
}}

\maketitle

\begin{abstract}
We study the billiard map corresponding to a periodic Lorentz gas in 
2-dimensions in the presence of small holes in the table.  We allow 
holes in the form of open sets away
from the scatterers as well as segments on the
boundaries of the scatterers.   For a large class of smooth initial distributions, 
we establish the existence of a common escape rate and normalized
limiting distribution.  This limiting
distribution is conditionally invariant and is the natural analogue
of the SRB measure of a closed system.
Finally, we prove that as the size of the hole tends to
zero, the limiting distribution converges to the smooth invariant measure of
the billiard map.
\end{abstract}



This paper is about {\it leaky dynamical systems}, or {\it dynamical systems with
holes}. Consider a dynamical system defined by a map or a flow on a phase space
$M$, and let $H \subset M$ be a hole through which orbits escape, that is to say,
once an orbit enters $H$, we stop considering it from that point on.
Starting from an initial probability distribution $\mu_0$ on $M$, mass will leak out of
the system as it evolves. Let $\mu_n$ denote the distribution remaining at time $n$.
The most basic question one can ask about a leaky system is its
{\it rate of escape}, i.e. whether $\mu_n(M) \sim \ra^n$ for some $\ra$.
Another important question concerns the nature of the remaining distribution.
One way to formulate that is to normalize $\mu_n$, and to inquire about properties
of $\mu_n/\mu_n(M)$ as $n$ tends to infinity. Such limiting distributions, when
they exist, are not invariant; they are {\it conditionally invariant}, meaning
they are invariant up to a normalization.
Comparisons of systems with
small holes with the corresponding {\it closed} systems, i.e. systems for which
the holes have been plugged, are also natural. These are some of the questions
we will address in this paper.

We do not consider these questions in the abstract, however; for a review paper
in this direction, see \cite{demers young}. Our context here is that of billiard systems
with small holes. Specifically,
we carry out our analysis for the collision map of
a 2-dimensional periodic Lorentz gas, and expect our results to be
extendable to other dispersing billiards. Our holes are ``physical" holes,
in the sense that they are derived from holes in the physical domain of
the system, i.e., the billiard table: we consider
both convex holes away from the scatterers and holes that live on the boundaries
of the scatterers. The holes considered in this paper are very small,
but their placements are immaterial.
For these leaky systems, we prove that there is a common rate
of escape and a common limiting distribution for a large class of natural
initial distributions including those with densities with respect to Liouville measure.
These conditionally invariant measures, therefore, can be viewed as characteristic
of the leaky systems in question, in a way that is analogous to physical measures or
SRB measures for closed systems. We show, in fact, that as hole size tends to
zero, these measures tend to the natural invariant measure of the corresponding
closed billiard system.

Our proof involves constructing a Markov tower extension with a special property
over the billiard map, the new requirement being that it {\it respects the hole}.
Let us backtrack a little for readers not already familiar with these ideas:
In much the same way that Markov partitions have proved to be very useful
in the study of Anosov and Axiom A diffeomorphisms, it was shown, beginning
with [Y] and continued in a number of other papers, that many systems with sufficiently
strong hyperbolic properties (but which are not necessarily uniformly hyperbolic)
admit countable Markov extensions. Roughly speaking, these extensions behave
like countable state Markov chains ``with nonlinearity"; they have considerably
simpler structures than the original dynamical system.  The idea behind this work
is that escape dynamics are much simpler in a Markov setting when the hole corresponds to a collection of ``states"; this is what we mean by the Markov
extension ``respecting the hole."
All this is not for free, however. We pay a price with a somewhat elaborate construction
of the tower, and again when we pass the information back to the billiard system,
in exchange for having a Markov structure to work with in the treatment of the hole.

There are advantages to this route of proof: First, once a Markov extension 
is constructed for a system, it can be used many times over for entirely 
different purposes. For the billiard 
maps studied here, these extensions were constructed in [Y]; 
our main task is to adapt them to holes. Second, once results on escape dynamics
are established on towers, they apply to all Markov extensions. 
Here, the desired results are already known in a special 
case, namely expanding towers  \cite{demers bruin}; we need to extend them
to the general, hyperbolic setting. {\it What we propose here is a unified,
generic approach for dealing with holes in dynamical systems}, 
one that can, in principle, be carried out for all systems that admit Markov towers. 
Such systems include logistic maps, rank one attractors including 
the H\'enon family, piecewise hyperbolic maps and other dispersing 
billiards in 2 or more dimensions.

Conditionally invariant measures were first introduced in 
probabilistic settings, namely
countable state Markov chains and topological Markov chains, 
beginning with
\cite{vere} and more recently in \cite{ferrari} and \cite{collet 3}.
In this setting, such measures are called quasi-stationary distributions
and the existence of a Yaglom limit corresponds to 
the limit $\mu_n/\mu_n(M)$, which we use here to identify a physical 
conditionally invariant measure for the leaky system.

The first works to study deterministic systems with holes took advantage of
finite Markov partitions.  These include: 
Expanding maps on $\R^n$ with holes which are elements of a finite Markov
partition \cite{pianigiani yorke, collet 1, collet 2};
Smale horseshoes \cite{cencova 1, cencova 2};
Anosov diffeomorphisms
\cite{chernov m 1, chernov m 2, chernov mt 1, chernov mt 2};
billiards with convex scatterers satisfying a non-eclipsing condition
\cite{lopes mark, richardson} and large parameter logistic maps whose critical
point maps out of the interval \cite{homburg young}. In the latter two, the
holes are chosen in such a way that the surviving dynamics are uniformly
expanding or hyperbolic with Markov partitions.  
First results
which drop Markov requirements on the map
include piecewise expanding maps of the interval
\cite{baladi keller, chernov bedem, liverani maume, demers exp, demers bruin};
Misiurewicz \cite{demers logistic} and Collet-Eckmann \cite{demers bruin} maps
with generic holes; and
piecewise uniformly hyperbolic maps \cite{demers liverani}.
The tower construction is used in the one-dimensional studies
\cite{demers exp, demers logistic,
demers bruin}. Typically a restriction on the size of the hole is introduced
in order to control the dynamics when a finite Markov partition is absent.

General conditions ensuring the existence of conditionally invariant
measures are first given in \cite{maume}.  The physical
relevance of such measures, however, is unclear without further 
qualifications.
As noted in \cite{demers young}, under very weak assumptions
on the dynamical system, many such
measures exist: for any prescribed rate of escape, one can construct
infinitely many conditionally invariant densities.
This is the reason for the
emphasis placed in this paper on the limit $\mu_n/\mu_n(M)$,
which identifies a unique, physically relevant conditionally
invariant measure.

\medskip
This paper is organized as follows: Our results are formulated in Sect.~1.
In Sects.~2 and 3, the geometry of billiard maps and holes are looked at
carefully as we modify previous constructions to give a generalized
horseshoe that respects the hole. Out of this horseshoe, a Markov tower
extension is constructed and results on escape dynamics on it proved; this
is carried out in Sects.~4 and 5. These results are passed back to the billiard system
in Sect.~6, where the remaining theorems are also proved.

\section{Formulation of Results}

\subsection{Basic definitions}
\label{open def}

We consider a {\it closed} dynamical system defined by a self-map
$f$ of a manifold $M$, and let $H \subset M$ be a {\it hole} through
which orbits escape, {\it i.e.},  we stop considering an orbit once
it enters $H$. In this paper we are primarily concerned with holes
that are open subsets of the phase space; they are not too large and
generally not $f$-invariant. We will refer to the triplet $(f,M,H)$
as a {\it leaky system}.

First we introduce some notation. Let $\mathring M = M \backslash
H$. At least to begin with, let us make a formal distinction between
$f$ and $\mathring f = f |(\mathring M \cap f^{-1}\mathring M):
\mathring M \cap f^{-1}\mathring M \to \mathring M$, and write
$\mathring f^n = f^n | (\bigcap_{i=0}^n f^{-i}\mathring M)$. Let
$\eta$ be a probability measure on $\mathring M$. We define
$\mathring f_*\eta$ to be the measure on $\mathring M$ defined by
$(\mathring f_*\eta)(A) = \eta(\mathring f^{-1}A)$ for each Borel
set $A \subset \mathring M$. If $\eta$ is an initial distribution on
$\mathring M$, then $\eta^{(n)} := \mathring f^n_*\eta/|\mathring
f^n_*\eta|$ is the normalized distribution of points remaining in
$\mathring M$ after $n$ units of time.

Given an initial distribution $\eta$, the most basic question is the
rate at which mass is leaked out of the system. We define the {\it
escape rate} starting from $\eta$ to be $-\log \ra (\eta)$ where
$$
\log \ra (\eta) = \lim_{n\to \infty} \frac{1}{n} \log
\eta\left(\bigcap_{i=0}^n f^{-i}\mathring M \right) \qquad {\rm assuming \
such \ a \ limit \ exists.}
$$
Another basic object is the {\it limiting distribution}
$\eta^{(\infty)}$ defined to be $\eta^{(\infty)} =  \lim_{n \to
\infty} \eta^{(n)}$ if this weak limit exists. Of particular
interest is when there is a number $\ra_*$ and a probability measure
$\mu_*$ with the property that for all $\eta$ in a large class of
natural initial distributions (such as those having densities with
respect to Lebesgue measure), we have $\ra(\eta)=\ra_*$ and
$\eta^{(\infty)}=\mu_*$. In such a situation, $\mu_*$ can be thought
of as {\it a physical measure for the leaky system $(f,M,H)$}, in
analogy with the idea of physical measures for closed systems.

A Borel probability measure $\eta$ on $M$ is said to be
\emph{conditionally invariant} if it satisfies $\mathring
f_*\eta=\ra \eta$ for some $\ra \in (0,1]$. Clearly, the escape rate
of a conditionally invariant measure $\eta$ is well defined and is
equal to $- \log \ra$. Most leaky dynamical systems admit many
conditionally invariant measures; see [DY]. In particular, limiting
distributions, when they exist, are often conditionally invariant; they
are among the more important conditionally invariant measures from
an observational point of view.

Finally, when a physical measure $\eta$ for a leaky system $(f,M,H)$ has
absolutely continuous conditional measures on the
unstable manifolds of the underlying closed system $(f,M)$, we will
call it {\it an SRB measure for the leaky system},
in analogy with the idea of SRB measures for closed systems.


\subsection{Setting of present work}
\label{setting}

The underlying closed dynamical system here is the billiard map
associated with a 2-dimensional periodic Lorentz gas. Let $\{
\Gamma_i : i = 1, \cdots , d\}$ be pairwise disjoint $C^3$
simply-connected curves on ${\mathbb T}^2$ with strictly positive
curvature, and consider the billiard flow on the ``table" $X =
{\mathbb T}^2 \setminus \bigcup_i\{{\rm interior} \Gamma_i\}$. We
assume the ``finite horizon" condition, which imposes an upper bound
on the number of consecutive tangential collisions with $\cup
\Gamma_i$. The phase space of the unit-speed billiard flow is
$\mathcal{M} = (X \times {\mathbb S}^1)/\sim$ with suitable
identifications at the boundary. Let $M = \cup_i \Gamma_i \times
[-\frac{\pi}{2}, \frac{\pi}{2}] \subset \mathcal{M}$ be the
cross-section to the billiard flow corresponding to collision with
the scatterers, and let $f: M \to M$ be the Poincar\'e map. The
coordinates on $M$ are denoted by $(r,\varphi)$ where $r \in \cup
\Gamma_i$ is parametrized by arc length and $\varphi$ is the angle a
unit tangent vector at $r$ makes with the normal pointing into the
domain $X$. We denote by $\nu$ the invariant probability measure
induced on $M$ by Liouville measure on $\mathcal{M}$, {\it i.e.},
$d\nu = c \cos \varphi dr d\varphi$ where $c$ is the normalizing
constant.

We consider the following two types of holes:

\smallskip
\noindent {\it Holes of Type I.} In the table $X$, a hole $\sigma$
of this type is an open interval in the boundary of a scatterer.
When $q_0 \in \cup \Gamma_i$, we refer to $\{q_0\}$ as an {\it
infinitesimal hole}, and let $\Sigma_h(q_0)$ denote the collection
of all open intervals $\sigma \subset \cup \Gamma_i$ in the
$h$-neighborhood of $q_0$. A hole $\sigma$ in $X$ of this type corresponds to a
set $H_\sigma \subset M$ of the form $(a,b)\times [-\frac{\pi}{2},
\frac{\pi}{2}]$.

\smallskip
\noindent {\it Holes of Type II.} A hole $\sigma$ of this type is an
open convex subset of $X$ away from $\cup_i \Gamma_i$ and bounded by
a $C^3$ simple closed curve with strictly positive curvature. As
above, we regard $\{q_0 \}\subset X \setminus \cup \Gamma_i$ as an
infinitesimal hole, and use $\Sigma_h(q_0)$ to denote the set of all
$\sigma$ in the $h$-neighborhood of $q_0$. In this case, $\sigma
\subset X$ does not correspond directly to a set in $M$. Rather,
$\sigma$ corresponds directly to a set in $\mathcal{M}$, the phase
space for the billiard {\it flow}, and we must make a choice as to
which set in the cross section M will represent the hole for the
billiard map.  There is a well defined set $B_\sigma \subset M$
consisting of all $(r,\varphi)$ whose trajectories under the
billiard flow on $\mathcal{M}$ will enter $\sigma \times
\mathbb{S}^1$ before reaching $M$ again. Thus $H_\sigma =
f(B_\sigma)$ is a natural candidate for the hole in $M$ representing
$\sigma$, and will be taken as such in this work.  However, it would
also have been possible to take $B_\sigma$ as the representative
set.  The geometry of $B_\sigma$ and $H_\sigma$ in phase space will
be discussed in detail in Sect.~3.1. Also, we note that the
requirement that $\partial\sigma$ be a $C^3$ simple closed curve
with strictly positive curvature can be considerably relaxed. It is
even possible to allow some holes $\sigma$ that are not convex. See
the remark at the end of Sect. 3.1.


\subsection{Statement of results}
\label{results}

Let $\G = \G( H_\sigma)$ denote the set of finite Borel measures
$\eta$ on $M$ that are absolutely
continuous with respect to $\nu$ with $d\eta/d\nu$ being (i) Lipschitz on each connected component of $M$ and (ii) strictly positive on
$\cap_{i=0}^\infty f^{-i}\hM$.
 Notice that measures on $M$ with Lipschitz $d\eta/d\nu$
correspond to measures on
$\mathcal{M}$ having a Lipschitz density with respect to Liouville
measure.

\bigskip
\noindent {\bf Standing hypotheses for Theorems 1--3:} {\it We assume

(1) $f:M \to M$ is the billiard map defined in Sect.~1.2,

(2) $\{q_0\}$ is an infinitesimal hole of either Type I or Type II,
and

(3) $\sigma \in \Sigma_h(q_0)$ where $h>0$
is assumed to be sufficiently small. }

\begin{thm}{\bf (Common escape rate).}
\label{thm:escape rate}
All initial distributions $\eta \in \cal G$ have a common
escape rate $-\log \ra_*$ for some $\ra_*<1$; more precisely, for all
$\eta \in \cal G$, $\ra(\eta)$ is well
defined and is equal to $\ra_*$.
\end{thm}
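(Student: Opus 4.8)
The plan is to reduce the statement to escape-rate results on the Markov tower extension described in the introduction, and to use the fact that the tower ``respects the hole.'' The strategy has three stages. First, one must check that every $\eta \in \G$ lifts to a measure on the tower $\FN$ (or $\tF$) with a density that decays exponentially along the tower levels, so that the known spectral results for the tower transfer operator apply. This is where condition (i), Lipschitz-ness of $d\eta/d\nu$ on each component of $M$, is used: it ensures the lifted density is dynamically H\"older (or belongs to the relevant cone of densities on the reference set $\Delta_0$), which is the regularity hypothesis needed to run the tower machinery from Sects.~4--5. Second, one invokes the spectral picture on the tower: the transfer operator associated with the open tower map has a spectral gap, so there is a leading eigenvalue $\ra_*$, a corresponding conditionally invariant density $h_*$, and all admissible initial densities satisfy $|\mathring{F}^n_* \eta_{\mathrm{tower}}| \sim C(\eta)\, \ra_*^n$ with $C(\eta) > 0$ provided $\eta$ charges the relevant part of the tower. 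Third, one pushes this estimate back down to $M$ via the projection $\pi$, using that projection commutes with the dynamics and sends $\bigcap_{i=0}^n f^{-i}\hM$ to the corresponding surviving set on the tower; this yields $\eta\big(\bigcap_{i=0}^n f^{-i}\hM\big) \asymp \ra_*^n$ and hence $\ra(\eta) = \ra_*$.

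In more detail, the order of steps I would carry out: (a) Fix $\eta \in \G$ and construct its lift $\bar\eta$ to the tower; verify $d\bar\eta/d(\text{reference measure})$ lies in the admissible cone, using the Lipschitz hypothesis together with the distortion and contraction estimates for the billiard holonomy that were set up in Sects.~2--3. (b) Quote the escape-rate/spectral-gap theorem for hyperbolic towers with holes (the extension of \cite{demers bruin} to the hyperbolic setting, proved in Sects.~4--5), which gives $\ra_*$ as the leading eigenvalue and an asymptotic $|\mathring{F}^n_*\bar\eta| = (c_\eta + o(1))\ra_*^n$ with a strictly positive constant $c_\eta$. The positivity of $c_\eta$ is where condition (ii), strict positivity of $d\eta/d\nu$ on $\bigcap_{i \ge 0} f^{-i}\hM$, enters: it guarantees $\bar\eta$ gives positive mass to the surviving set of the tower, so the projection onto the leading eigendirection is nonzero. (c) Translate $|\mathring{F}^n_*\bar\eta|$ back to $\eta(\bigcap_{i=0}^n f^{-i}\hM)$: the projection $\pi$ is a semiconjugacy, so the two quantities differ by at most a bounded factor (in fact they should be equal once one accounts correctly for return-time normalizations), giving the two-sided bound $c_1 \ra_*^n \le \eta(\bigcap_{i=0}^n f^{-i}\hM) \le c_2 \ra_*^n$ and therefore $\lim_n \frac1n \log \eta(\bigcap_{i=0}^n f^{-i}\hM) = \log \ra_*$, with $\ra_*$ independent of $\eta$.

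The main obstacle I anticipate is stage (a): showing that an arbitrary $\eta \in \G$ lifts to an admissible density on the tower, uniformly enough that the spectral estimates apply with the \emph{same} $\ra_*$ for all such $\eta$. The tower is built over a generalized horseshoe with a hierarchical (multi-scale) structure, and the return-time function has exponential tails rather than finite range; so one must control how the pushforward of a merely Lipschitz density on $M$ distributes across infinitely many tower levels, and check that after one return it contracts into the invariant cone. This requires combining the uniform hyperbolicity and bounded-distortion estimates for dispersing billiards (the Lipschitz-control-of-densities and growth lemmas) with the specifics of the hole-respecting construction from Sect.~3 — in particular that the hole pulls back to a union of tower elements, so that opening the system does not destroy the Markov/cone structure. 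A secondary technical point is ensuring that the exceptional set where the lift fails to be well-behaved (boundaries of the partition, the singularity set of the billiard, points that never enter the base of the tower) carries zero $\eta$-mass and does not affect the $\limsup$/$\liminf$; this is handled by the fact that $\eta \ll \nu$ and the standard estimates on the measure of neighborhoods of the singularity set.
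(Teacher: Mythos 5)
Your proposal is correct and follows essentially the same route as the paper: lift $\eta$ to the tower via $d\tilde\eta = (\psi\circ\pi)\,d\tilde\nu$, use condition (i) plus the $p$-metric estimate (P4)(a) to show the lifted density lands in the Banach space $\B$ on the quotient tower, use condition (ii) plus Corollary~\ref{cor:positive} to get $d(\bar\rho)>0$, and then push back with the exact identity $\eta(M^n)=\tilde\eta(\Delta^n)$ (not merely a two-sided bound) supplied by $\pi_*\tilde\eta=\eta$ and the Markov-hole property. The only detail worth flagging is that the semiconjugacy gives you this identity on the nose, so the hedging about bounded factors in your step (c) is unnecessary.
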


\begin{thm}{\bf (Common limiting distribution).}
\label{thm:accim}

\noindent (a) For all $\eta \in \cal G$, the normalized surviving
distributions $\mathring f^n_*\eta/|\mathring f^n_*\eta|$ converge
weakly to a

common conditionally invariant distribution $\mu_*$ with
$\ra(\mu_*)=\ra_*$.

\noindent (b) In fact, for all $\eta \in \cal G$, there is a
constant $c(\eta) > 0$ s.t. $\ra_*^{-n} \mathring f^n_*\eta$
converges weakly to 

$c(\eta) \mu_*$.
\end{thm}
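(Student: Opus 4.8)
\textbf{Proof proposal for Theorem~\ref{thm:accim}.}
The plan is to transport the question to the Markov tower built in Sects.~4--5 (the one constructed so as to respect the hole), invoke the escape-dynamics results proved there, and push the conclusions back down to $M$. Write $\pi\colon \Delta \to M$ for the tower projection; by construction $\pi$ semiconjugates the tower map $\F$ to $\hf$, and $\pi^{-1}H_\sigma$ is exactly the hole of the tower, so $\pi$ intertwines $\F^n_*$ with $\hf^n_*$ on the survivor sets. Recall from Sects.~4--5 that the leaky tower dynamics carries a conditionally invariant probability measure $\tilde\mu_*$ with $\F_*\tilde\mu_* = \ra_*\tilde\mu_*$ (built by saturating the positive left/right eigendata of the perturbed transfer operator on the associated quotient tower with absolutely continuous conditionals on unstable leaves), and that the spectral gap of that operator yields, for every initial measure $\tilde\eta$ in the admissible class on $\Delta$, a constant $c(\tilde\eta)>0$ with $\ra_*^{-n}\F^n_*\tilde\eta \to c(\tilde\eta)\,\tilde\mu_*$ weakly.

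First comes a \emph{lifting} step: given $\eta\in\G$, produce a measure $\tilde\eta$ on $\Delta$ in the admissible class with $\pi_*\tilde\eta=\eta$. Since $d\eta/d\nu$ is Lipschitz on each component of $M$ and strictly positive on $\cap_i f^{-i}\hM$, the standard lift along the return-time partition gives a $\tilde\eta$ whose restriction to each level $\Delta_\ell$ has a density (against the reference measure) that is regular in the tower's sense and whose mass on $\bigsqcup_{\ell\ge L}\Delta_\ell$ decays exponentially in $L$, using the exponential return-time tail bounds. One checks that this $\tilde\eta$ satisfies the hypotheses of the tower escape-dynamics theorem and that no mass of $\tilde\eta$ leaks ``in the wrong place'', both being consequences of the hole-respecting property from Sect.~3. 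Next one pushes forward: from $\pi\circ\F^n = \hf^n\circ\pi$ and $\pi^{-1}H_\sigma$ being the tower hole we get $\pi_*(\F^n_*\tilde\eta) = \hf^n_*\eta$ for all $n$, so for any Lipschitz test function $\psi$ on $M$ — noting that $\psi\circ\pi$ lies in, or is uniformly approximable in, the function space against which the tower convergence holds — we obtain $\ra_*^{-n}(\hf^n_*\eta)(\psi) = \ra_*^{-n}(\F^n_*\tilde\eta)(\psi\circ\pi) \to c(\tilde\eta)\,(\pi_*\tilde\mu_*)(\psi)$. Setting $\mu_*:=\pi_*\tilde\mu_*$ and $c(\eta):=c(\tilde\eta)$ gives part~(b). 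Part~(a) then follows by normalization: $\hf^n_*\eta/|\hf^n_*\eta|\to \mu_*/|\mu_*| = \mu_*$, where clause~(ii) in the definition of $\G$ together with the positivity of $\tilde\mu_*$ on the survivor set ensures $|\hf^n_*\eta|>0$ for every $n$ and $c(\eta)>0$, so no division by zero occurs; and $\mu_*$ is conditionally invariant with $\ra(\mu_*)=\ra_*$ because $\pi$ intertwines $\F_*$ with $\hf_*$ and $\F_*\tilde\mu_*=\ra_*\tilde\mu_*$ (the absolute continuity of $\tilde\mu_*$ on unstable leaves passes to $\mu_*$, which is the SRB-type statement).

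The real weight of the argument sits in Sects.~4--5: the uniform Lasota--Yorke estimates and the Keller--Liverani-type perturbation argument giving the transfer operator a spectral gap with a \emph{simple} leading eigenvalue $\ra_*$ — simplicity being exactly what forces $\tilde\mu_*$ and $\ra_*$ to be independent of $\tilde\eta$. In the passage back to $M$ carried out here, the two delicate points are (i) showing that an arbitrary $\eta\in\G$ admits an admissible lift with exponentially small tails across the tower levels, so that the tower theorem genuinely applies and the truncation errors at level $L$ are controlled uniformly in $n$, and (ii) transferring weak convergence through $\pi$, which is not continuous, by exploiting the dynamically adapted regularity of $\psi\circ\pi$ rather than plain continuity. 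I expect (i) to be the main obstacle at this stage.
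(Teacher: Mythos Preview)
Your proposal is correct and follows essentially the same route as the paper: lift $\eta$ to the tower, apply Theorem~\ref{thm:tower} there, and push back through $\pi$ using $\pi_*(\F^n_*\tilde\eta)=\hf^n_*\eta$, with $\mu_*=\pi_*\tilde\mu_*$ and $c(\eta)=d(\brho)$.

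One refinement worth noting concerns your step~(i), which you correctly flag as the main issue. The paper does not use a generic ``lift along the return-time partition''; it uses a specific, clean construction: since $\nu=\pi_*\tnu$ for the unique $F$-invariant measure $\tnu$ on $\Delta$ with absolutely continuous conditionals on unstable leaves, one sets $d\teta=(\psi\circ\pi)\,d\tnu$ where $\psi=d\eta/d\nu$. This makes $\pi_*\teta=\eta$ automatic, and the verification that $\teta\in\tG$ (Lemma~\ref{lemma:G}) reduces to checking that $\brho=\int_{\gamma^s}\rho_\gamma\,d\teta^s$ lies in $\B$ --- which follows because $\rho_\gamma$ is a product of three factors ($\tpsi$, $d\tnu/d\mu_\gamma$, $d\mu_\gamma/dm_\gamma$), each bounded and in $\mbox{Lip}^u(\Delta)$ --- and that $d(\brho)>0$, which uses the positivity of $\psi$ on $M^\infty$ together with Corollary~\ref{cor:positive}. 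Your concern about exponential tails across levels is subsumed in the $\|\cdot\|$-norm bound on $\brho$; no separate truncation argument is needed. Your point~(ii) is handled by observing that for $\vf\in C^0(M)$ one has $\tphi=\vf\circ\pi\in C^0_b(\Delta)$, which suffices since the tower convergence in Theorem~\ref{thm:tower}(b) is weak*; the $\mbox{Lip}^u$ regularity (Lemma~\ref{lemma:lip}) is used earlier, inside the proof of Lemma~\ref{lemma:G}.
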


Thus from an observational point of view, $- \log \ra_*$ is the escape rate
and $\mu_*$ the physical measure for the leaky system
${(f,M,H_\sigma)}$.

\begin{thm}{\bf (Geometry of limiting distribution). }
\label{thm:geometry}

\noindent (a) $\mu_*$ is singular with respect to $\nu$;

\noindent  (b) $\mu_*$ has strictly positive conditional 
densities on local unstable manifolds.
\end{thm}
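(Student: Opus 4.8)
The plan is to read off both statements from the structure of $\mu_*$ as the image, under the projection $\pi$ from the Markov tower to $M$, of a conditionally invariant measure $\tilde\mu_*$ on the tower whose conditional densities along unstable leaves are uniformly comparable to those of the reference (SRB-like) measure used to build the tower. Concretely, $\mu_*$ arises as a limit of $\ra_*^{-n}\F^n_*\eta$; on the tower side this limit is governed by a transfer operator acting on a cone of functions that are Lipschitz (or dynamically H\"older) along unstables and have controlled regularity transverse to them. So before proving (a) or (b), I would record the key property inherited from the tower construction: $\mu_*$ disintegrates over a measurable family of local unstable manifolds $\{W^u_\alpha\}$, and for $\mu_*$-a.e.\ $\alpha$ the conditional measure $\mu_*^\alpha$ on $W^u_\alpha$ is absolutely continuous with respect to the induced Riemannian volume, with density bounded above and below by positive constants depending only on the hole and the billiard (uniform distortion bounds along unstable leaves). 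This is exactly part (b), and it is essentially a restatement of what the tower gives us, pushed forward by $\pi$ (which is finite-to-one and smooth along unstables).

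For part (a), the idea is that $\mu_*$ cannot be absolutely continuous with respect to $\nu$ because it is carried by the surviving set $\Omega := \bigcap_{i\ge 0}\F^{-i}\hM = \bigcap_{i\ge 0} f^{-i}(M\setminus \overline{H_\sigma})$ — more precisely by its closure minus escaping parts — together with the conditional-invariance relation $\F_*\mu_* = \ra_*\mu_*$ with $\ra_*<1$ from Theorem~\ref{thm:escape rate}. First I would argue that $\mu_*$ gives full measure to $\Omega$: any mass on a point that escapes in finite time would be destroyed by $\F_*$, contradicting $\F_*\mu_*=\ra_*\mu_*$ together with $\mu_*(M)=1$; a short induction on the iterates of the relation pins $\mu_*$ on $\bigcap_n \F^{-n}\hM$. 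Then I would show $\nu(\Omega)=0$: for a topologically mixing dispersing billiard with a hole of positive $\nu$-measure, the non-escaping set has zero Liouville measure — this follows from the exponential escape rate for $\nu$-a.c.\ initial data (a consequence of Theorem~\ref{thm:escape rate} applied to $\eta = \nu|_{\hM} \in \G$, giving $\nu(\bigcap_{i=0}^n f^{-i}\hM)\sim \ra_*^n \to 0$). Since $\mu_*(\Omega)=1$ while $\nu(\Omega)=0$, $\mu_*\perp\nu$.

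The main obstacle is the first reduction in (a): showing rigorously that $\mu_*$ is supported on the full forward-surviving set, not merely that it is conditionally invariant. The subtlety is that $\hM = M\setminus H_\sigma$ is open and $\F$ is only defined on $\hM\cap \F^{-1}\hM$, so "supported on $\bigcap_n \F^{-n}\hM$" must be phrased carefully at the boundary of the hole and at the billiard singularity set; one has to check that $\mu_*$ charges neither. For the hole boundary this uses that $\mu_*$ is a weak limit of measures pushed forward under $\F^n$, which by construction assign no mass to $\partial H_\sigma$ (a $\nu$-null, hence $\mu_*$-controllable, set given the bounded-density-along-unstables property from (b)), and one invokes the portmanteau theorem together with $\ra_*<1$ to kill any atom that would survive only finitely many steps. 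For the singularity set $\mathcal{S}$ of $f$, I would use that the tower was constructed so that $\pi$ avoids $\mathcal{S}$ on a full-measure set, so $\mu_*(\mathcal{S})=0$ automatically. Once these two measure-zero facts are in hand, the conditional-invariance relation iterated backwards forces $\mu_*(\Omega)=1$ and the rest is the volume computation above. I expect (b) to be comparatively routine given the tower, since uniform distortion along unstable manifolds for 2D dispersing billiards is standard and is built into the cone of densities propagated by the transfer operator.
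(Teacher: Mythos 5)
Your argument for part (a) has a sign error that is fatal. You take $\Omega := \bigcap_{i\ge 0}f^{-i}\hM$, the set of points whose \emph{forward} orbit avoids the hole, and claim $\mu_*(\Omega)=1$. But conditional invariance $\hf_*\mu_*=\ra_*\mu_*$ with $\ra_*<1$ gives $\mu_*\bigl(\bigcap_{i=0}^n f^{-i}\hM\bigr) = |\hf^n_*\mu_*| = \ra_*^n$, so in fact $\mu_*(\Omega)=\lim_n\ra_*^n = 0$: the forward-surviving set carries \emph{none} of $\mu_*$, not all of it. (Your claim that ``mass on a point that escapes in finite time would be destroyed, contradicting $\hf_*\mu_*=\ra_*\mu_*$'' is backwards: conditional invariance with $\ra_*<1$ says precisely that a fraction $1-\ra_*$ of the mass is destroyed every step.) The set you want is the \emph{backward}-surviving set $\bigcap_{n\ge 0}f^n(\hM)=M\setminus\bigcup_{n\ge 0}f^n(H)$. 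From $\hf_*^m\mu_*=\ra_*^m\mu_*$ together with the fact that the range of $\hf^m$ is contained in $\bigcap_{j=0}^m f^j\hM$, one sees $\mu_*\bigl(\bigcup_{j=0}^m f^j(H)\bigr)=0$ for every $m$, and this is where $\mu_*$ lives. The paper then invokes ergodicity of $(f,\nu)$ to get $\nu\bigl(\bigcup_n f^n(H)\bigr)=1$; your escape-rate computation (that $\nu\bigl(\bigcap_{i=0}^n f^{-i}\hM\bigr)\to 0$, and hence by $f$-invariance $\nu\bigl(\bigcap_{j=0}^n f^{j}\hM\bigr)\to 0$ as well) would serve equally well as the second half — but only after you switch to the correct set.

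Your argument for part (b) misses the central difficulty. You assert that $\pi_*\tmu_*$ directly inherits strictly positive conditional densities on local unstable manifolds from the tower. But each unstable leaf of $\Delta$ projects under $\pi$ to a Cantor set $\gamma\cap\pi(\dlj)$ inside a local unstable curve of $f$, not to a whole curve. Proposition~\ref{prop:uniform bounds}(i) controls the conditional density of $\tmu_*$ on these Cantor sets; pushing forward therefore gives absolute continuity with two-sided density bounds \emph{on the Cantor sets}, but nothing prevents the density of $\mu_*$ from vanishing on the gaps. This is exactly why the statement of Theorem~\ref{thm:geometry}(b) is formulated in terms of ``patches,'' and why the paper remarks that ``it is not productive to view $\mu_*$ as $\pi_*\tmu_*$'' for this part. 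Instead, the paper realizes $\mu_*$ as the weak limit of $\hf^n_*\nu/|\hf^n_*\nu|$. Because the horseshoe was built to respect $H$ (with $f(\partial H)$ added to the discontinuity set of $f^{-1}$), every $\gamma^u\in\Gamma^u(\Lambda)$ either falls into $H$ completely or avoids it completely under each backward iterate, so $\hf^n_*\nu/|\hf^n_*\nu|$ restricted to $V=\cup\Gamma^u(\Lambda)$ equals $c_n\,\nu|_{U_n}$ for a decreasing sequence of unions $U_n$ of \emph{whole} unstable curves. Strict positivity of the limiting conditional densities on whole curves is then inherited from the smooth measure $\nu$, not from the tower densities; pushing the construction forward under $f^\ell$ produces the remaining patches. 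That argument is the missing ingredient.
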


The precise meaning of the statement in part (b) of Theorem 3
 is that there are countably many
``patches" $(V_i, \mu_i)$, $i=1,2,\ldots$, where for each $i$,

(i) $V_i \subset M$ is the union of a continuous family of unstable curves
$\{ \gamma^u \}$;

(ii) $\mu_i$ is a measure on $V_i$ whose  conditional measures on
$\{\gamma^u\}$ have strictly positive

\quad densities with respect to the
Riemannian measures on $\gamma^u$;

(iii) $\mu_i \le \mu_*$ for each $i$, and $\sum_i \mu_i \geq \mu_*$.

\noindent This justifies viewing $\mu_*$ as the
SRB measure for the leaky system ${(f,M,H_\sigma)}$.

Our final result can be interpreted as a kind of stability for the natural
invariant measure $\nu$ of the billiard map without holes.

\begin{thm}{\bf (Small-hole limit).}
\label{thm:small hole limit} We assume (1) and (2) in the Standing
Hypotheses above. Let $\sigma_h \in \Sigma_h(q_0), h>0$, be an arbitrary
family of holes, and let  $-\log\ra_*(\sigma_h)$ and $\mu_*(\sigma_h)$
be the escape rate and physical measure for the leaky system
$(f,M,H_{\sigma_h})$. Then $\ra_*(\sigma_h) \to 1$ and
$\mu_*(\sigma_h) \to \nu$ as $h \to 0$.
\end{thm}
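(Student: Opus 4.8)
The plan is to derive both conclusions from uniform control of the spectral data of the transfer operators on the Markov tower as $h \to 0$, together with the characterization of $\ra_*$ and $\mu_*$ already obtained in Theorems 1--3. First I would recall that for each $\sigma_h$ the tower construction in Sects.~2--4 produces a transfer operator $\bLp_{\sigma_h}$ acting on a suitable Banach space of densities, whose leading eigenvalue is $\ra_*(\sigma_h)$ and whose normalized leading eigenfunction projects down to $\mu_*(\sigma_h)$; when $h=0$ (no hole) the leading eigenvalue is $1$ and the eigenfunction corresponds to $\nu$. The key point is that the constants in the Lasota--Yorke / spectral-gap estimates for $\bLp_{\sigma_h}$ can be taken uniform in $h$ (this is already implicit in the statement ``$h$ sufficiently small'' in the Standing Hypotheses), so the family $\{\bLp_{\sigma_h}\}$ is a small perturbation, in the appropriate operator topology, of $\bLp_0$ as $h \to 0$. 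Since $\diam(H_{\sigma_h}) \to 0$ and $\nu(H_{\sigma_h}) \to 0$, one gets $\|\bLp_{\sigma_h} - \bLp_0\| \to 0$ in the weak norm used for stability of the spectral picture (cf.\ the Keller--Liverani perturbation framework, which applies here once the uniform Lasota--Yorke bounds are in hand).

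Given that, the argument splits cleanly. For the escape rate: by upper semicontinuity of the isolated spectrum under this perturbation, $\limsup_{h\to 0}\ra_*(\sigma_h) \le 1$; for the lower bound, I would produce an explicit lower estimate on the surviving mass, e.g.\ using a fixed $\eta \in \G$ and the first-return structure to show $\eta\big(\bigcap_{i=0}^{n} f^{-i}\hM\big) \ge (1 - C\,\nu(H_{\sigma_h}))^{n}$-type bounds with $C$ independent of $h$, forcing $\liminf_{h\to 0}\ra_*(\sigma_h)\ge 1$. Combined with $\ra_* < 1$ from Theorem 1, this gives $\ra_*(\sigma_h)\to 1$. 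For the measures: the normalized leading eigenfunction $g_{\sigma_h}$ of $\bLp_{\sigma_h}$ converges in the weak norm to the leading eigenfunction $g_0$ of $\bLp_0$ (spectral projections converge because the eigenvalue $\ra_*(\sigma_h)$ stays isolated and simple and separates from the rest of the spectrum uniformly). Pushing this convergence down through the tower projection $\bpi$ and using that $\bpi_* g_0 \, d(\text{ref}) = \nu$, one obtains $\mu_*(\sigma_h) \to \nu$ weakly. I would also note that weak convergence on $M$ suffices for the statement, and that any two weak limit points of $\mu_*(\sigma_h)$ are $f$-invariant (the conditional invariance relation $\mathring f_* \mu_*(\sigma_h) = \ra_*(\sigma_h)\mu_*(\sigma_h)$ passes to the limit with $\ra_* \to 1$) and absolutely continuous in the relevant sense, hence equal to $\nu$ by uniqueness of the SRB measure of the closed billiard map --- this gives an alternative, softer route to the convergence of the measures that does not even need the full strength of the perturbation theory, only tightness plus identification of the limit.

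The main obstacle I expect is establishing the uniformity in $h$ of the tower construction and of the spectral estimates: the tower associated with $\sigma_h$ is built so as to ``respect the hole,'' and a priori its reference sets, return-time function, and the resulting Banach-space constants could degenerate as the hole shrinks and its position varies. I would need to check that the horseshoe and the set of ``good'' return times constructed in Sects.~2--3 can be chosen from a single $h$-independent family (shrinking $h$ only removes finitely many, or a controlled collection of, states from play), so that tail estimates for the return-time function, distortion bounds, and the contraction rate in the Lasota--Yorke inequality are all bounded independently of $\sigma_h \in \Sigma_h(q_0)$. Once this uniformity is secured, both conclusions follow from standard perturbation-of-spectrum arguments; the softer invariance-plus-uniqueness argument for $\mu_*(\sigma_h)\to\nu$ is a useful fallback should the quantitative operator convergence prove delicate to state in the weak norm.
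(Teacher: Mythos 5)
Your proposal offers two routes, and it is the second, ``softer'' one that the paper actually uses; your primary Keller--Liverani route has a genuine obstruction that you flag but underestimate. The problem is not merely that the tower constants might degenerate: the towers $\Delta^{(\sigma_h)}$ associated to different holes are \emph{different abstract spaces}, built from different horseshoes $\Lambda^{(\sigma_h)}$ with different Markov partitions (since the boundary of each hole is incorporated into the discontinuity set used to define the horseshoe). Consequently the operators $\bLp_{\sigma_h}$ do not act on a common Banach space, and there is no natural sense in which $\|\bLp_{\sigma_h}-\bLp_0\|\to 0$ without first doing substantial identification work. The paper acknowledges precisely this at the start of its argument for $\mu_*(\sigma_h)\to\nu$: ``Since the measures $\tilde\mu_n$ do not live on the same space for different $n$, a first task here is to find common domains in $M$ on which $(\pi_n)_*\tilde\mu_n$ can be compared.'' A perturbation-of-spectrum argument in the style you sketch would require solving exactly this comparison problem, and at that point one has already done most of the work of the softer route.

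On the escape rate, the paper does not use a surviving-mass bound of the form $(1-C\nu(H_{\sigma_h}))^n$ (which is a heuristic and would be delicate to justify via first returns). Instead it uses the explicit eigenvalue bound of Theorem~4.2(2),
\[
\ra_* \;>\; 1 - \frac{1+C_1}{\bm(\bDelta_0)}\sum_{\ell\ge 1}\beta^{-(\ell-1)}\bm(H_\ell),
\]
together with the fact that $\bar n(h)\to\infty$ as $h\to 0$ (Proposition~2.2) so that $H_\ell=\emptyset$ for $\ell<\bar n(h)$. Combined with the uniform exponential tail $\bm(\bDelta_\ell)\le C_0\theta_0^\ell$ and $\beta>\theta_0$, this yields $\ra_*(\sigma_h)\to 1$; this is Proposition~4.5(iii). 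For the measures, the paper carries out exactly your fallback: pass to a weakly convergent subsequence $\mu_n\to\mu_\infty$, show $\mu_\infty$ is $f$-invariant (this requires first proving $\mu_\infty(S)=0$ for the singularity set $S$, which uses the uniform tail estimate Proposition~4.5(ii)), and show $\mu_\infty$ has absolutely continuous conditional measures on unstable leaves (this is the heart of the argument and uses uniform conditional density bounds from Proposition~4.5(i) plus a Cantor diagonal argument over tower levels to pass the bounds to the limit). Uniqueness of the SRB measure then forces $\mu_\infty=\nu$. So the ``fallback'' is not merely a useful hedge; it is the proof, and the work you should anticipate is in establishing the uniform estimates of Proposition~4.5 and in making the absolute-continuity step precise across the varying towers.
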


\medskip
\noindent {\bf Some straightforward generalizations:}
Our proofs continue to hold under the more general conditions below, but
we have elected not to discuss them (or to include them formally in the
statement of our theorems) because keeping track of an increased
number of objects will necessitate more cumbersome notation.

\medskip
\noindent 1. {\it Holes.} Our results apply to more general classes
of holes than those described above. For example,
we could fix a finite number of infinitesimal holes $\{q_0\},
..., \{q_k\}$ and consider $\sigma = \cup_i \sigma_i$ with $\sigma_i
\in \Sigma_h(q_i)$. In fact, we may take more than one $\sigma_i$ in
each $\Sigma_h(q_i)$ for as long as the total number of holes is
uniformly bounded.  See Sect.~3.4 for further generalizations on the types
of holes allowed.

\medskip
\noindent 2. {\it Initial distributions.} Theorems 1 and 2 (and consequently
Theorems 3 and 4) remain true with
$\mathcal{G}$ replaced by a broader class of measures.
For example, we use only the Lipschitz property of $d\eta/d\nu$
along unstable leaves, and it is sufficient
for $d\eta/d\nu$ to be strictly positive on large enough open sets
(see Remark~\ref{rem:generalize}). Moreover, $d\eta/d\nu$ need not be bounded
provided it blows up
sufficiently slowly near the singularity set for $f$.
Finally, we remark that Theorem 2(b) continues to hold without requiring
that $d \eta / d \nu$ be strictly positive anywhere, except that
now $c(\eta)$ might be $0$.

\section{Relevant Dynamical Structures}
\label{relevant structures}

Our plan is to show that the billiard maps described in Sect.~1.2
admit certain structures called ``generalized horseshoes" which can
be arranged to ``respect the holes." The main results are summarized
in Proposition 2.2 in Sect. 2.2 and proved in Sect.~3.

\subsection{Generalized horseshoes}
\label{general horseshoe}

We begin by recalling the idea of a {\it horseshoe with infinitely
many branches and variable return times} introduced in [Y] for
general dynamical systems without holes. These objects will be
referred to in this paper as ``generalized horseshoes".

Following the notation in Sect.~1.1 of [Y], we consider a smooth or
piecewise smooth invertible map $f: M \to M$, and let $\mu$ and $\mu_\gamma$
denote respectively the Riemannian measure on $M$ and on $\gamma$
where $\gamma \subset M$ is a submanifold. We say the pair
$(\Lambda, R)$ defines a {\it generalized horseshoe} if {\bf
(P1)}--{\bf (P5)} below hold (see [Y] for precise formulation):

\begin{itemize}
\item[{\bf (P1)}] $\Lambda$ is a compact subset of $M$ with a hyperbolic product
structure, {\it i.e.}, $\Lambda = (\cup \Gamma^u) \cap (\cup
\Gamma^s)$ where $\Gamma^s$ and $\Gamma^u$ are continuous families
of local stable and unstable manifolds, and $\mu_{\gamma}\{\gamma
\cap \Lambda\}>0$ for every $\gamma \in \Gamma^u$.
\item[{\bf (P2)}] $R: \Lambda \to {\mathbb Z}^+$ is a {\it return time function}
to $\Lambda$. Modulo a set of $\mu$-measure zero, $\Lambda$ is the
disjoint union of $s$-subsets $\Lambda_j, j=1,2, \cdots,$ with the
property that for each $j$, $R|_{\Lambda_j}=R_j \in {\mathbb Z}^+$
and $f^{R_j}(\Lambda_j)$ is a $u$-subset of $\Lambda$.
\end{itemize}
There is a notion of {\it separation time} $s_0(\cdot, \cdot)$,
depending only on the unstable coordinate, defined for pairs of
points in $\Lambda$, and there are numbers $C>0$ and $\alpha <1$
such that the following hold for all $x,y \in \Lambda$:
\begin{itemize}
\item[{\bf (P3)}] For $y \in \gamma^s(x)$, $d(f^nx,f^ny) \le C\alpha^n$
for all $n \ge 0$.
\item[{\bf (P4)}] For $y \in \gamma^u(x)$ and $0 \le k \le n < s_0(x,y)$,

(a) $d(f^nx,f^ny) \le C \alpha^{s_0(x,y)-n}$;

(b) $\log \Pi_{i=k}^n \frac{\det Df^u(f^ix)}{\det Df^u(f^iy)} \ \le
\ C \alpha^{s_0(x,y)-n}.$

\item[{\bf (P5)}] (a) For $y \in \gamma^s(x)$,
$\log \Pi_{i=n}^\infty \frac{\det Df^u(f^ix)}{\det Df^u(f^iy)} \ \le
\ C \alpha^n$ for all $n \ge 0.$

(b) For $\gamma, \gamma' \in \Gamma^u$, if $\Theta : \gamma \cap
\Lambda \to \gamma' \cap \Lambda$ is defined by
$\Theta(x)=\gamma^s(x) \cap \gamma'$, then $\Theta$ is absolutely
continuous and $\frac{d(\Theta_*^{-1}\mu_{\gamma'})}{d\mu_\gamma}(x)
\ = \ \Pi_{i=0}^\infty \frac{\det Df^u(f^ix)}{\det Df^u(f^i\Theta
x)}$.
\end{itemize}

The meanings of the last three conditions are as follows: Orbits
that have not ``separated" are related by local hyperbolic
estimates; they also have comparable derivatives. Specifically, {\bf
(P3)} and {\bf (P4)}(a) are (nonuniform) hyperbolic conditions on
orbits starting from $\Lambda$. {\bf (P4)}(b) and {\bf (P5)} treat
more refined properties such as distortion and absolute continuity
of $\Gamma^s$, conditions that are known to hold for
$C^{1+\varepsilon}$ hyperbolic systems.

We say the generalized horseshoe $(\Lambda, R)$ has {\it exponential
return times} if there exist $C_0>0$ and $\theta_0>0$ such that for
all $\gamma \in \Gamma^u$, $\mu_\gamma\{R>n\} \le C_0 \theta_0^n$
for all $n \ge 0$.

\medskip
The setting described above is that of [Y]; it does not involve
holes. In this setting, we now identify a set $H \subset M$ (to be
regarded later as the hole) and introduce a few relevant
terminologies. Let $(\Lambda, R)$ be a generalized horseshoe for $f$
with $\Lambda \subset (M \setminus H)$.

We say $(\Lambda, R)$ {\it respects} $H$ if for every $i$ and every
$\ell$ with $0 \le \ell \le R_i$, $f^\ell(\Lambda_i)$ either does
not intersect $H$ or is completely contained in $H$.

The following definitions of ``mixing" are motivated by Markov-chain
considerations: Let  $\Lambda^s \subset \Lambda$ be an $s$-subset.
We say $\Lambda^s$ makes a {\it full return} to $\Lambda$ at time
$n$ if there are numbers $i_0, i_1, \cdots, i_k$ with $n=R_{i_0}+
\cdots + R_{i_k}$ such that $\Lambda^s \subset \Lambda_{i_0},
f^{R_{i_0} + \cdots + R_{i_j}}(\Lambda^s) \subset \Lambda_{i_{j+1}}$
for $j < k$, and $f^n(\Lambda^s)$ is a $u$-subset of $\Lambda$. (i)
We say the horseshoe $(\Lambda, R)$ is {\it mixing} if there exists
$N$ such that for every $n \ge N$, some $s$-subset $\Lambda^s(n)$
makes a full return at time $n$. (ii) If $(\Lambda, R)$ respects
$H$, then when we treat $H$ as a hole, we say the {\it surviving
dynamics} are {\it mixing} if in addition to the condition in (i),
we require that $f^\ell\Lambda^s(n) \cap H = \emptyset$ for all
$\ell$ with $0\le \ell \le n$. This is equivalent to requiring that
$\Lambda^s(n)$ makes a full return to $\Lambda$ at time $n$ under
the dynamics of $\mathring f$, where $\mathring f$ is the map
defined in Sect. 1.1.

We note that the mixing of $f$ in the usual sense of ergodic theory
does not imply that any generalized horseshoe constructed is
necessarily mixing in the sense of the last paragraph, nor does
mixing of the horseshoe imply that of its surviving dynamics.


\subsection{Main Proposition for billiards with holes}

With these general ideas out of the way, we now return to the
setting of the present paper. From here on, $f: M \to M$ is the
billiard map of the 2-D Lorentz gas as in Sect.~1.2. The following
result lies at the heart of the approach taken in this paper:

\begin{proposition} {\rm (Theorem 6(a) of [Y])} The map
$f$ admits a generalized horseshoe with exponential return times.
\end{proposition}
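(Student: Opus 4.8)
The plan is to recall how the generalized horseshoe for a finite-horizon 2-D dispersing billiard is constructed; this is Theorem~6(a) of [Y]. The starting point would be the classical Bunimovich--Sinai--Chernov hyperbolicity package for $f$: there are $f$-invariant stable and unstable cone fields with uniform expansion along unstable curves and uniform contraction along stable curves, but the singularity set of $f$ (curves of grazing collisions, $\partial M$, together with $f^{-1}(\partial M)$) makes $Df$ blow up nearby. So the first thing I would do is tame distortion by introducing \emph{homogeneity strips}: partition the $k^{-2}$-neighbourhoods of $\{\varphi = \pm\tfrac{\pi}{2}\}$ into countably many strips $\mathbb{I}_k$ and append their boundaries to the singularity set. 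On a \emph{homogeneous} unstable curve --- one meeting no such boundary, together with however many of its forward images are relevant --- one gets uniformly bounded distortion of $\det Df^u$ along the curve and H\"older control of the Jacobian of the unstable holonomy. These are exactly the ingredients needed for \textbf{(P4)(b)} and \textbf{(P5)}, and since billiard maps are $C^{1+\eps}$ on homogeneity strips the usual $C^{1+\eps}$ arguments apply.

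Next I would establish the \emph{growth lemma}, which I expect to be the main obstacle. Fixing a length scale $\delta_0 > 0$, the goal is to produce $n_0 \in \N$ and $\theta \in (0,1)$ such that for every unstable curve $\gamma$ that is not too short, if $f^{n_0}\gamma = \bigsqcup_i \gamma_i$ is the decomposition of its image into maximal homogeneous smooth pieces, then the total $\mu_\gamma$-weight of the preimages of the \emph{short} pieces (those with $|\gamma_i| < \delta_0$) is at most $\theta\,\mu_\gamma(\gamma)$. In words, uniform expansion along unstable curves has to outrun the rate at which the primary and secondary (homogeneity-strip) singularities chop those curves up; this is where the finite-horizon hypothesis enters, since it bounds the number of singularity lines through any point and hence the subdivision rate. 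Everything after this step is assembly in the format of [Y].

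Granting the growth lemma, I would build $\Lambda$ as follows. Pick a point of $\mathrm{supp}\,\nu$ with a homogeneous unstable curve through it crossed transversally by a continuous family of stable curves, and thicken in the unstable direction to obtain a reference set $\Lambda = (\cup \Gamma^u) \cap (\cup \Gamma^s)$ with a hyperbolic product structure; this gives \textbf{(P1)}. By the growth lemma, under iteration a definite $\mu_\gamma$-fraction of any $\gamma \in \Gamma^u$ grows, within $O(n_0)$ steps, into a homogeneous curve longer than $\delta_0$, which then stretches fully across $\Lambda$ in the unstable direction (here the absolute-continuity and alignment estimates from the first step are used). The $s$-subset of points that do so at the first such time $R$ becomes a component $\Lambda_j$ with $R_j = R$, and $f^{R_j}\Lambda_j$ is by construction a $u$-subset of $\Lambda$ --- this gives \textbf{(P2)}. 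Iterating the same argument on the not-yet-returned remainder, whose unstable sections still have the required geometry, exhausts $\Lambda$ modulo $\mu$-measure zero and also yields $\mu_\gamma\{\gamma \cap \Lambda\} > 0$ for each $\gamma \in \Gamma^u$.

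Finally I would collect the remaining estimates. \textbf{(P3)} and \textbf{(P4)(a)} are the uniform hyperbolicity bounds along non-separated billiard orbits, with the separation time $s_0$ read off from the homogeneous-component structure; \textbf{(P4)(b)} and \textbf{(P5)} are the distortion and absolute-continuity bounds from the homogeneity strips already noted above. For the exponential return times, each block of $n_0$ iterates returns at least a fraction $1-\theta$ of the remaining $\mu_\gamma$-mass, so $\mu_\gamma\{R > m n_0\} \le \theta^m \mu_\gamma(\gamma)$, and hence $\mu_\gamma\{R > n\} \le C_0 \theta_0^n$ with $\theta_0 = \theta^{1/n_0}$, which is the required tail bound. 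The only genuinely hard input in this chain is the growth lemma of the second step; the rest is a careful transcription of known dispersing-billiard estimates into the axioms \textbf{(P1)}--\textbf{(P5)}.
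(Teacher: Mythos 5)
Your sketch reproduces the overall architecture of [Y] (homogeneity strips for distortion, a growth lemma pitting expansion against cutting, a Cantor set $\Lambda$ with hyperbolic product structure, a return time with exponential tail), and items \textbf{(P1)}--\textbf{(P5)} are handled in essentially the way the paper recounts in Sect.~2.3. But there are two genuine gaps in the passage from the growth lemma to the exponential tail of $R$, and together they hide the real work of the proof.

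First, you pass from ``a definite fraction of $\gamma$ grows to a homogeneous curve of length $\ge\delta_0$ within $n_0$ steps'' to ``which then stretches fully across $\Lambda$,'' and you attribute the latter to absolute-continuity and alignment estimates. That is not correct: a curve of length $\ge\delta_0$ may lie anywhere in phase space and need not be anywhere near $Q(\Lambda)$. The fact that long homogeneous curves $u$-cross $Q$ within a bounded number of additional iterates is the property the paper calls (**) (Theorem~3.13 of [BSC2], sketched in Sect.~2.4). Its proof is a compactness-plus-mixing argument: one covers the set of possible long curves by finitely many mixing boxes $U_j$, uses mixing of $(f,\nu)$ to push each $U_j$ across a target set $\tilde U_0\subset\Lambda$ at all large times, and then uses the geometry of $f^i(\partial M)$ (Sublemma~B) to ensure the crossing is unbroken and homogeneous. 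This is a separate mixing input, not a consequence of distortion/holonomy estimates, and even once it is in hand only a fixed fraction $\varepsilon_1>0$ (not $1-\theta$) of a long curve actually returns on any given opportunity.

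Second, the tail estimate $\mu_\gamma\{R>n\}\le C_0\theta_0^n$ does not follow by simply iterating the growth lemma as a one-step contraction. After a return is declared, the non-returning portion of $\omega$ consists of the ``gaps'' of $\Lambda$ along $\gamma^u$, and these have a specific adverse geometry: each gap $\omega'$ was created at some generation $q$ because it came close to $f^{-1}(\partial M)$, and its length and expansion history are tied to that. The paper's Step~5 (Lemma~2.4) supplies a separate estimate $p(\omega^c_n\setminus\{T\le n\})<D_2\theta_2^n$ for exactly these gaps, splitting into high-generation gaps (controlled by cut-versus-growth) and low-generation gaps (controlled by applying the modified growth lemma to $f^{q-1}\omega'$). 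Only after Lemmas 2.3, 2.4 and property (**) are all in place does the stopping-time combinatorics of Step~7 produce the exponential tail, via the two estimates $p(T_{[\varepsilon' n]}>n)<D_3\theta_3^n$ and $p(\omega\cap\{R>n+n_0\})\le(1-\varepsilon_1)p(\omega)$ whenever $T_k|\omega=n$. Your final paragraph compresses all of this into ``each block of $n_0$ iterates returns a fraction $1-\theta$,'' which is not a statement the growth lemma alone justifies. You also omit the ``matching of Cantor sets'' point (why $f^{R_j}(\Lambda_j)$ is exactly a $u$-subset of $\Lambda$, which requires returns to be forbidden before a large $R_1$), but the two items above are the more serious omissions.
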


A few more definitions are needed before we are equipped to state
our main proposition: We call $Q \subset M$ a rectangular region if
$\partial Q = \partial^u Q \cup \partial^s Q$ where $\partial^u Q$
consists of two unstable curves and $\partial^s Q$ two stable
curves. We let $Q(\Lambda)$ denote the smallest rectangular region
containing $\Lambda$, and define $\mu^u(\Lambda):= \inf_{\gamma \in
\Gamma^u} \mu_\gamma(\Lambda \cap \gamma)$. Finally, for a
generalized horseshoe $(\Lambda, R)$ respecting a hole $H$, we
define
$$n(\Lambda, R; H)= \sup \{n \in {\mathbb Z}^+: \
{\rm no \ point \ in} \ \Lambda \ {\rm falls \ into} \ H \ {\rm in \
the\ first} \ n \ \rm iterates\}.
$$

In the rest of this paper, $C$ and $\alpha$ will be the constants in
{\bf (P3)}--{\bf (P5)} for the closed system $f$. All notation is as
in Sect.~1.2.

\begin{proposition}
\label{prop:horseshoe} Given an infinitesimal hole $\{q_0\}$ of Type
I or II, there exist $C_0, \kappa>0$, $\theta_0 \in (0,1)$, and a
rectangular region $Q$ such that for all small enough $h$ we have
the following:
\begin{itemize}
\item[(a)] For each $\sigma \in \Sigma_{h}(q_0)$,

(i) $f$ admits a generalized horseshoe $(\Lambda^{(\sigma)},
R^{(\sigma)})$ respecting $H_\sigma$;

(ii) both $(\Lambda^{(\sigma)}, R^{(\sigma)})$ and the corresponding
 surviving dynamics are mixing.
\item[(b)] All $\sigma \in \Sigma_{h}(q_0)$ have the following uniform properties:

(i) $Q(\Lambda^{(\sigma)}) \approx Q$ \footnote{By
$Q(\Lambda^{(\sigma)}) \approx Q$, we only wish to convey that both
rectangular regions are located in roughly the same region of the
phase space, $M$, and not anything technical in the sense of
convergence.} , and $\mu^u(\Lambda^{(\sigma)}) \ge \kappa$;

(ii) $\mu_\gamma\{R^{(\sigma)}>n\} < C_0 \theta_0^n$ for all $n \ge
0$;

(iii) (P3)--(P5) hold with the constants $C$ and $\alpha$.
\end{itemize}
Moreover, if $\bar n(h)= \inf_{\sigma \in \Sigma_h(q_0)} n(\Lambda,
R; H_\sigma)$, then $\bar n(h) \to \infty$ as $h \to 0$.
\end{proposition}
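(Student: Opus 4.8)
The plan is to construct the generalized horseshoe $(\Lambda^{(\sigma)}, R^{(\sigma)})$ by starting from the horseshoe $(\Lambda, R)$ guaranteed by Proposition~\ref{prop:horseshoe} (Theorem 6(a) of [Y]) for the closed billiard map, and then surgically modifying it so that it respects $H_\sigma$. The key structural observation is that near the infinitesimal hole $\{q_0\}$, for $h$ small the set $H_\sigma \subset M$ is a very thin tube: in the Type I case it is literally a thin vertical strip $(a,b) \times [-\frac{\pi}{2},\frac{\pi}{2}]$, and in the Type II case $H_\sigma = f(B_\sigma)$ where $B_\sigma$ is a thin region whose geometry will be analyzed in Sect.~3.1. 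In either case $\diam(H_\sigma) \to 0$ as $h \to 0$ in an appropriate sense, and one first chooses $Q$ to be a fixed rectangular region (coming from the closed horseshoe) positioned far from $q_0$ and from the singularity set of $f$, so that for all small $h$ the region $Q$, and the first few iterates of the base, are disjoint from $H_\sigma$.

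The central difficulty, and the step I expect to be the main obstacle, is arranging that $(\Lambda^{(\sigma)}, R^{(\sigma)})$ \emph{respects} $H_\sigma$ — i.e.\ for every branch $\Lambda^{(\sigma)}_i$ and every $0 \le \ell \le R^{(\sigma)}_i$, the image $f^\ell(\Lambda^{(\sigma)}_i)$ is either disjoint from $H_\sigma$ or entirely inside it. The natural mechanism is a refinement procedure: given the original partition $\{\Lambda_j\}$ of $\Lambda$ into $s$-subsets, one subdivides each $\Lambda_j$ further according to the itinerary of its points relative to $H_\sigma$ over the time interval $[0, R_j]$, using the stable/unstable foliations of $\Lambda$ to cut along $s$-curves and $u$-curves so that the subdivided pieces remain $s$-subsets. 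Because $H_\sigma$ has boundary components that are (pre)images of smooth curves transverse to the foliations, each such cut introduces at most boundedly many new pieces per unit time, but one must control the \emph{total} number and the total measure lost: here one uses the exponential tail $\mu_\gamma\{R > n\} \le C_0\theta_0^n$ together with the fact that, by hyperbolicity, the fraction of an unstable curve that can shadow the thin set $H_\sigma$ for $\ell$ steps without falling in is controlled, so that the surviving set still satisfies $\mu^u(\Lambda^{(\sigma)}) \ge \kappa$ for a uniform $\kappa > 0$ and retains exponential return times with uniform constants $C_0, \theta_0$. The uniformity in $\sigma \in \Sigma_h(q_0)$ is what forces the somewhat elaborate bookkeeping; it is obtained by doing all estimates in terms of $\diam H_\sigma$ and the distance of $Q$ from $q_0$, both of which are controlled uniformly over $\Sigma_h(q_0)$.

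Once the respecting horseshoe is in hand, properties (b)(i)--(iii) are comparatively routine: (b)(iii), that (P3)--(P5) hold with the \emph{same} $C, \alpha$ as for the closed system, is immediate because $\Lambda^{(\sigma)} \subset \Lambda$ and the separation-time and distortion estimates are inherited by passing to subsets; (b)(i) follows from the construction of $Q$ and the lower bound on surviving $u$-measure established above; (b)(ii) is the exponential-tail bound just discussed. For the mixing statements in (a)(ii), one first invokes mixing of the closed horseshoe — or rather one builds it into the construction, as in [Y], by ensuring some $s$-subset makes a full return at every sufficiently large time $n$ — and then argues that for $h$ small this full-return orbit segment can be chosen to avoid the thin set $H_\sigma$ entirely. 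Concretely, among the (infinitely many) $s$-subsets realizing a full return at time $n$, the ones whose entire orbit of length $n$ stays in a fixed compact region bounded away from $q_0$ will survive; establishing that such choices exist for all large $n$, uniformly in small $h$, is the content of the surviving-mixing claim, and it reduces to a combinatorial statement about the return-time structure of the closed horseshoe near $Q$ plus the geometric fact that an $n$-orbit spends only a bounded proportion of its time near any fixed point of $M$.

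Finally, for the last assertion that $\bar n(h) = \inf_{\sigma \in \Sigma_h(q_0)} n(\Lambda, R; H_\sigma) \to \infty$ as $h \to 0$: the base $\Lambda = \Lambda^{(\sigma)}$ lies in the fixed region $Q$, which is at a fixed positive distance $\delta_0$ from $q_0$; by uniform continuity of $f, f^2, \dots, f^k$ away from the singularity set, there is, for each fixed $k$, an $h_k > 0$ such that for all $h < h_k$ the orbit segment $\{x, fx, \dots, f^k x\}$ of any $x \in \Lambda$ stays in the $\delta_0/2$-neighborhood of its closed-system counterpart and in particular avoids the $h$-neighborhood of $q_0$, hence avoids $H_\sigma$ for every $\sigma \in \Sigma_h(q_0)$. (In the Type II case one uses instead that $H_\sigma = f(B_\sigma)$ with $B_\sigma$ in the $h$-neighborhood of $q_0$, so the argument applies to the trajectory in $\mathcal{M}$.) Thus $\bar n(h) \ge k$ for $h < h_k$, which gives the claim. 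The one subtlety is that the base region $Q$ and the stable/unstable curves defining $\Lambda$ must genuinely be taken independent of $h$, which is exactly what part (b)(i) asserts, so the argument is self-consistent.
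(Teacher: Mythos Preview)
Your overall strategy---start from the closed-system horseshoe $(\Lambda,R)$ and refine it by subdividing each $\Lambda_j$ according to the $H_\sigma$-itinerary on $[0,R_j]$---has a structural gap at exactly the point you flag as the ``central difficulty.'' The problem is with property \textbf{(P2)}. The preimages $f^{-\ell}(\partial H_\sigma)$ for $1\le\ell\le R_j$ intersect $\Lambda_j$ in $s$-curves, so your subdivided pieces $\Lambda_j'$ are indeed $s$-subsets of $\Lambda_j$. But then $f^{R_j}(\Lambda_j')$ is only a \emph{partial} $u$-crossing of $\Lambda$: in product coordinates, if $\Lambda_j\cong[0,1]_u\times[0,1]_s$ maps onto the $u$-subset $[0,1]_u\times[c,d]_s$ of $\Lambda$, an $s$-subset $[a,b]_u\times[0,1]_s$ maps onto $[a',b']_u\times[c,d]_s$ with $[a',b']\subsetneq[0,1]$. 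So the refined collection is not a generalized horseshoe at all. To repair this you would have to let the small pieces keep iterating until they grow long enough to $u$-cross again, and then re-establish the exponential tail with the additional cuts coming from the hole---but that is precisely redoing the growth-versus-cutting argument of [Y], not a surgical modification of the finished object.

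This is why the paper takes a different route: rather than refining an existing horseshoe, it \emph{enlarges the discontinuity set} to include $f^{-1}(\partial H_\sigma\cup H_0)$ and reruns the entire [Y] construction (items 1--7 of Sect.~2.3) from scratch in this modified phase space $M_\sigma$. The hole-respecting property is then automatic, since points on opposite sides of $\partial H_\sigma$ are declared separated from the outset. The real work shifts to showing that the key competition between expansion and cutting (item~4, Lemma~2.3) survives with uniform constants when the new cuts are added; this is the content of Lemma~3.3, which bounds the number of connected components of $f^m\omega$ by $K_1 m^2+4$ rather than $K_0 m+1$, and the choice of $m$ absorbs the polynomial loss. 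A consequence is that $\Lambda^{(\sigma)}$ is \emph{not} a subset of the closed-system $\Lambda$; it is built around a nearby but different density point $x_1^{(\sigma)}$, so your argument for (b)(iii) via inheritance under passing to subsets does not apply as stated (though the conclusion is correct for the simpler reason that separation under the enlarged discontinuity set happens no later than under the original one). Your sketch for $\bar n(h)\to\infty$ is close in spirit, but note that $f,f^2,\dots$ are not uniformly continuous on $\Lambda$; the paper instead uses that, by construction, orbits from $\Lambda^{(\sigma)}$ approach $f^{-1}H_0$ no faster than the prescribed rate $\delta_1\lambda_1^{-n}$, and then invokes Lemma~3.1.
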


 \noindent {\bf Clarification:}

\smallskip \noindent
1. Here and in Sect. 3, there is a set, namely $H_\sigma$, that is
identified to be ``the hole," and a horseshoe is constructed to
respect it. Notice that the construction is continued after a set
enters $H_\sigma$. For reasons to become clear in Sect.~6, we cannot
simply disregard those parts of the phase space that lie in the
forward images of $H_\sigma$.

\smallskip \noindent
2. Proposition 2.2 treats only small $h$, i.e. small holes. The
smallness of the holes and the uniformness of the estimates in part
(b) are needed for the spectral arguments in Sect.~4 to apply.
Without any restriction on $h$, all the conclusions of
Proposition 2.2 remain true except for the following: (a)(ii), where
for large holes the surviving dynamics need not be mixing, (b)(i),
and (b)(ii), where $C_0$ and $\theta_0$ may be $\sigma$-dependent.
The assertions for large $h$ will be evident from our proofs; no separate
arguments will be provided.

\bigskip
A proof of Proposition 2.2 will require that we repeat the
construction in the proof of Proposition 2.1 -- and along the way,
to carry out a treatment of holes and related issues. We believe it
is more illuminating conceptually (and more efficient in terms of
journal pages) to focus on what is new rather than to provide a
proof written from scratch. We will, therefore, proceed as follows:
The rest of this section contains a review of all the arguments used
in the proof of Proposition 2.1, with technical estimates omitted
and specific references given in their place. A proof of Proposition
2.2 is given in Sect.~3. There we go through the same arguments point
by point, explain where modifications are needed and treat new
issues that arise. For readers willing to skip more technical
aspects of the analysis not related to holes, we expect that they
will get a clear idea of the proof from this paper alone. For
readers who wish to see all detail, we ask that they read this proof
alongside the papers referenced.


\subsection{Outline of construction in [Y]}
\label{young horseshoe}

In this subsection, the setting and notation are both identical to
that in Sect.~8 of [Y]. Referring the reader to [Y] for detail, we
identify below 7 main ideas that form the crux of the proof of
Proposition 2.1. We will point out the use of billiard properties
and other geometric facts that may potentially be impacted by the
presence of holes. Holes are not discussed explicitly, however,
until Sect.~3.

\medskip
\noindent {\bf Notation and conventions:}  In [Y], $S_0$ and
$\partial  M$ were used interchangeably. Here we use exclusively
$\partial M$. Clearly, $f^{-1}\partial M$ is the discontinuity set of $f$.

\smallskip
\noindent (i) {\it $u$- and $s$-curves.} Invariant cones $C^u$ and $C^s$
are fixed at each point, and curves all of whose tangent vectors are
in $C^u$ (resp. $C^s$) are called $u$-curves (resp. $s$-curves).

\smallskip
\noindent (ii) {\it The $p$-metric.} Euclidean distance on $M$ is
denoted by $d(\cdot, \cdot)$. Unless declared otherwise, distances
and derivatives along $u$- and $s$-curves are measured with respect
to a semi-metric called the $p$-metric defined by $\cos\varphi dr$.
These two metrics are related by $cp(x,y) \le d(x,y) \le p(x,y)^{\frac12}$.
By $W^u_\delta(x)$, we refer to the piece of local unstable curve of
$p$-length $2\delta$ centered at $x$. {\bf (P3)}--{\bf (P5)} in Sect. 2.1
hold with respect to the $p$-metric.  See Sect. 8.3 in [Y] for details.

\smallskip
\noindent (iii) {\it Derivative bounds.} With respect to the
$p$-metric, there is a number $\lambda>1$ so that all vectors in
$C^u$ are expanded by $\ge \lambda$ and all vectors in $C^s$
contracted by $\le \lambda^{-1}$. Furthermore, derivatives at $x$
along $u$-curves are $\sim d(x,
\partial M)^{-1}$. For purposes of distortion control, homogeneity
strips of the form
$$
I_k \ = \ \left \{(r,\varphi): \frac{\pi}{2} - \frac{1}{k^2} <
\varphi < \frac{\pi}{2} - \frac{1}{(k+1)^2} \right\}, \quad k \ge
k_0,
$$
are used, with $\{I_{-k}\}$ defined similarly in a neighborhood of
$\varphi = -\frac{\pi}{2}$. For convenience, we will refer to $M
\setminus (\cup_{|k| \ge k_0} I_k)$ as one of the ``$I_k$".

\medskip
\noindent {\bf Important Geometric Facts ($\dagger$):} The following
facts are used many times in the proof:
\begin{enumerate}   \vspace{-8 pt}
  \item[(a)]  the discontinuity set $f^{-1}\partial M$ is the union
of a finite number of compact piecewise smooth decreasing
curves, each of which stretches from $\{ \varphi = \pi / 2 \}$ to
$\{ \varphi = -\pi / 2 \}$;  \vspace{-8 pt}
  \item[(b)] $u$-curves are uniformly transversal (with angles
bounded away from zero) to $\partial M$ and to $f^{-1}\partial M$.
\end{enumerate}

\noindent {\bf 1. Local stable and unstable manifolds.} Only {\it
homogeneous} local stable and unstable curves are considered.
Homogeneity for $W^u_\delta$, for example, means that for all $n \ge
0$, $f^{-n}W^u_\delta$ lies in no more than 3 contiguous $I_k$. Let
$\delta_1>0$ be a small number to be chosen. We let $\lambda_1
=\lambda^{\frac14}$,  $\delta = \delta_1^4$, and define
$$
B^+_{\lambda_1, \delta_1} = \{x \in M: d(f^nx, \partial M \cup
f^{-1}(\partial M)) \ge \delta_1 \lambda_1^{-n} \ {\rm \ for \ all}
\ n \ge 0\}\ ,
$$
$$
B^-_{\lambda_1, \delta_1} = \{x \in M: d(f^{-n}x, \partial M \cup
f(\partial M)) \ge \delta_1 \lambda_1^{-n} \ {\rm \ for \ all} \ n
\ge 0\}\ .
$$
We require $d(f^nx, f^{-1}(\partial M))\ge \delta_1 \lambda_1^{-n}$
to ensure the existence of a local unstable curve through
$x$, while the requirement on $d(f^nx,
\partial M)$ is to ensure its homogeneity.\footnote{ In
fact, provided $\delta_1$ is chosen sufficiently small,
one can verify that $d(f^n x, f^{-1} (\partial M)) \ge \delta_1 \lambda_1^{-n}$
implies that $d(f^{n + 1} x, \partial M) \ge \delta_1 \lambda_1^{-(n + 1)}$
for all $n \ge 0$.  This fact, which was not used in \cite{young},
will be used in item 2 below to simplify our presentation. }
Similar reasons apply to
stable curves.  Observe that (i) for all $x \in B^+_{\lambda_1,
\delta_1}$, $W^s_{10 \delta}(x)$ is well defined and homogeneous
(this is straightforward since $\delta << \delta_1$ and $\lambda_1$
is closer to $1$ than $\lambda$); and (ii) as $\delta_1 \to 0$,
$\nu(B^+_{\lambda_1, \delta_1}) \to 1$ (this follows from a standard
Borel-Cantelli type argument). Analogous statements hold for
$B^-_{\lambda_1, \delta_1}$.

\medskip
\noindent {\bf 2. Construction of the Cantor set $\Lambda$.} The
choice of $\Lambda$ is, in fact, quite arbitrary. We pick a density
point $x_1$ of $B^+_{\lambda_1, 2\delta_1} \cap B^-_{\lambda_1,
2\delta_1}$ at least $2\delta_1$ away from $f^{-1}(\partial M) \cup
\partial M \cup f(\partial M)$, and let $\Omega =
W^u_\delta(x_1)$.\footnote{Later we will impose one further
technical condition on the choice of $x_1$.  See the very end of
Sect.~2.4.} For each $n$, we define
$$
\Omega_n = \{y \in \Omega : d(f^iy, f^{-1}(\partial  M) ) \geq
\delta_1 \lambda_1^{-i}\ \text{ for } 0 \leq i \leq n\}\ ,
$$
and let $\Omega_\infty = \cap_n \Omega_n$. Then
$\Omega_\infty\subset B^+_{\lambda_1, \delta_1}$, by the
footnote in item 1 above and our choice of $x_1$ far from $\partial M$.
Let $\Gamma^s$
consist of all $W^s_\delta(y), y \in \Omega_\infty$, and let
$\Gamma^u$ be the set of all homogeneous $W^u_{\text{loc}}$ curves
that meet every $\gamma^s \in \Gamma^s$ and which extend by a
distance $> \delta$ on both sides of the curves in $\Gamma^s$. The
set $\Lambda$, which is defined to be $(\cup \Gamma^u) \cap (\cup
\Gamma^s)$, clearly has a hyperbolic product structure. {\bf
(P5)(b)} is standard. This together with the choice of $x_1$
guarantees $\mu_\gamma\{\gamma \cap \Lambda\}>0$ for $\gamma \in
\Gamma^u$, completing the proof of {\bf (P1)}.

A natural definition of separation time for $x,y \in \gamma^u$ is as
follows: Let $[x,y]$ be the subsegment of $\gamma^u$ connecting $x$
and $y$. Then $f^nx$ and $f^ny$ are ``not yet separated," i.e.
$s_0(x,y) \geq n$, if  for all $i \le n$, $f^i[x,y]$  is connected
and is contained in at most 3 contiguous $I_k$. With this definition
of $s_0(\cdot, \cdot)$, {\bf (P3)}--{\bf (P5)(a)} are checked using
previously known billiard estimates.

\medskip
\noindent {\bf 3. The return map $f^R: \Lambda \to \Lambda$.} We
point out that there is some flexibility in choosing the return map
$f^R$: Certain conditions have to be met when a return takes place,
but when these conditions are met, we are not obligated to call it a
return; in particular, $R$ is not necessarily the {\it first} time
an $s$-subrectangle of $Q$ $u$-crosses $Q$ where $Q=Q(\Lambda)$.

We first define $f^R$ on $\Omega_\infty$.  Let $\tilde \Omega_n =
\Omega_n \setminus \{R \le n\}$. On  $\tilde \Omega_n$ is a
partition $\tilde {\cal P}_n$ whose elements are segments
representing distinct trajectories. The rules are different before
and after a certain time $R_1$, a lower bound for which is
determined by $\lambda_1$, $\delta_1$ and the derivative of
$f$.\footnote{In [Y], properties of $R_1$ are used in 4 places:
(I)(i) in Sect.~3.2, Sublemma 3 in Sect.~7.3, the paragraph
following (**) in Sect.~8.4., and a requirement in Sect.~8.3 that
stable manifolds pushed forward more than $R_1$ times are
sufficiently contracted.}

 (a) For $n <R_1$, $\tilde {\cal P}_n$ is constructed from
the results of the previous step\footnote{ In [Y], it was sufficient
to allow returns to $\Lambda$ at times that were multiples of a
large fixed integer $m$. Not only is this not necessary (see
Paragraph 4), here it is essential that we avoid such periodic
behavior to ensure mixing. Thus we take $m=1$ when choosing return
times in Paragraph 3. This is the only substantial departure we make
from the construction in [Y].  } as follows: Let $\omega \in \tilde
{\cal P}_{n-1}$, and let $\omega'$ be a component of $\omega \cap
\Omega_n$. Inserting cut-points only where necessary, we divide
$\omega'$ into subsegments $\omega_i$ with the property that
$f^n(\omega_i)$ is homogeneous. These are the elements of $\tilde
{\cal P}_n$. No point returns before time $R_1$.

(b) For $n \ge R_1$, we proceed as in (a) to obtain $\omega_i$. If
$f^n(\omega_i)$ $u$-crosses the middle of $Q$ with $ \ge 1.5\delta$
sticking out on each side, then we declare that $R=n$ on $\omega_i
\cap f^{-n} \Lambda$, and the elements of $\tilde {\cal
P}_n|_{\omega_i \cap \tilde \Omega_n}$ are the connected components
of $\omega_i \setminus f^{-n} \Lambda$. Otherwise put  $\omega_i \in
\tilde {\cal P}_n$ as before.

This defines $R$ on a subset of $\Omega_\infty$ (which we do not
know yet has full measure); the definition is extended to the
associated $s$-subset of $\Lambda$ by making $R$ constant on
$W^s_{\text{loc}}$-curves. The $s$-subsets associated with $\omega_i
\cap f^{-n} \Lambda$ in (b) above are the $\Lambda_j$ in {\bf (P2)}.
It remains to check that $f^R(\Lambda_j)$ is in fact a $u$-subset of
$\Lambda$. This is called the ``matching of Cantor sets" in [Y] and
is a consequence of the fact that $\Omega_\infty$ is dynamically
defined and that $R_1$ is chosen sufficiently large.

\medskip
It remains to prove that $p\{R \ge n\}$ decays exponentially with
$n$. Paragraphs 4, 5 and 6  contain the 3 main ingredients of the
proof, with the final count given in 7.

\medskip
\noindent {\bf 4. Growth of $u$-curves to ``long" segments.} This is
probably the single most important point, so we include a few more
details. We first give the main idea before adapting it to the form
it is used. Let $\eps_0>0$ be a number the significance of which we
will explain later. Here we think of a $u$-curve whose $p$-length
exceeds $\eps_0>0$ as ``long". Consider a $u$-curve $\omega$. We
introduce a stopping time $T$ on $\omega$ as follows. For $n=1,2,
\cdots$, we divide $f^n\omega$ into homogeneous segments
representing distinguishable trajectories. For $x \in \omega$, let
$$
T(x) = \inf \{n>0 :{\rm the \ segment \ of} \ f^n\omega \ {\rm
containing} \ f^nx \ {\rm has} \ p{\rm -length} \ > \eps_0\}\ .
$$

\begin{lemma} There exist $D_1>0$ and $\theta_1<1$ such that for any $u$-curve
$\omega$,
$$
p(\omega \setminus \{T \le n\}) < D_1 \theta_1^n \qquad {\rm for \
all} \ n \ge 1.
$$
\end{lemma}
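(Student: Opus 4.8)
The plan is to run the usual ``growth of unstable curves'' argument for dispersing billiards, whose engine is a one-step expansion estimate made quantitative by the homogeneity strips. For a $u$-curve $\omega$ and each $n\ge1$, form the partition of $f^n\omega$ into maximal homogeneous segments representing distinguishable trajectories: cut $f^n\omega$ at the points lying in $f^{-1}\partial M$, at the homogeneity-strip boundaries $\partial I_k$, and wherever connectedness of the iterate is lost. Call such a segment \emph{long} if its $p$-length exceeds $\eps_0$ and \emph{short} otherwise. Then $\{T\le n\}$ is exactly the set of $x$ whose segment has become long at some time $\le n$, so $\omega\setminus\{T\le n\}$ is the set of $x$ whose segment has stayed short at \emph{every} time $1,\dots,n$; the goal is to show this set has $p$-measure $<D_1\theta_1^n$.

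The key input is the \emph{one-step expansion estimate}: there are an integer $N_0\ge1$ and $\theta<1$ such that for every homogeneous $u$-curve $\gamma$ with $p(\gamma)\le\eps_0$ (with $\eps_0$ chosen small), writing $\gamma_1,\gamma_2,\dots$ for the homogeneous components of $f^{N_0}\gamma$ and $\Lambda_i=\inf_{\,\gamma\cap f^{-N_0}\gamma_i}|Df^{N_0}|$ in the $p$-metric, one has $\sum_i\Lambda_i^{-1}\le\theta$. The reason, which I would quote rather than re-derive since it is part of the standard billiard package behind {\bf (P3)}--{\bf (P5)} (cf.\ \cite{young} and the billiard references therein), is twofold. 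The boundedly-many components staying in the ``bulk'' $M\setminus\bigcup_{|k|\ge k_0}I_k$ are each expanded by $\ge\lambda^{N_0}$, and because the number of singularity curves of $f^{N_0}$ grows only subexponentially in $N_0$ (finite horizon), their total contribution is at most $(\mathrm{const}\cdot N_0)\,\lambda^{-N_0}$, which is $<\tfrac12$ once $N_0$ is large. A component entering $I_k$ is expanded there by $\gtrsim k^2$, since the derivative along $u$-curves is $\sim d(\cdot,\partial M)^{-1}\sim k^2$ on $I_k$; hence those pieces contribute at most $\mathrm{const}\cdot\sum_{|k|\ge k_0}k^{-2}$, which is $<\tfrac12$ once $k_0$ is large. (Bounded distortion on homogeneous pieces is what lets one pass from center-point expansion to the infimum $\Lambda_i$.)

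Granting this, I would finish by induction on $k$, over short curves. First reduce to $p(\omega)\le\eps_0$: a general $u$-curve is a union of at most $\mathrm{const}/\eps_0$ sub-curves of $p$-length $\le\eps_0$, and since for $\omega_j\subset\omega$ the $f^n\omega_j$-segment through a point is contained in the $f^n\omega$-segment through it, one gets $p(\omega\cap\{T_\omega>n\})\le\sum_j p(\omega_j\cap\{T_{\omega_j}>n\})$. So it suffices to prove $p(\gamma\cap\{T_\gamma>kN_0\})\le\eps_0\,\theta^{k}$ for every short $\gamma$; then set $\theta_1=\theta^{1/N_0}$, and for general $n=kN_0+r$ with $0\le r<N_0$ use $\{T>n\}\subset\{T>kN_0\}$, absorbing $\theta_1^{-N_0}$ and the $\mathrm{const}/\eps_0$ factor into $D_1$. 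The case $k=0$ is $p(\gamma)\le\eps_0$. For the step, let $\xi$ range over the \emph{short} components of $f^{N_0}\gamma$; because the endpoints of $\xi$ are cut points for all further iterates, each homogeneous component of $f^{iN_0}\xi$ lies inside the homogeneous component of $f^{(i+1)N_0}\gamma$ through the same point, so if the latter is short so is the former. This gives $\{T_\gamma>kN_0\}\cap(f^{N_0}|_\gamma)^{-1}\xi\subset(f^{N_0}|_\gamma)^{-1}\{y\in\xi:T_\xi(y)>(k-1)N_0\}$. Since $p\big((f^{N_0}|_\gamma)^{-1}E\big)\le\Lambda_\xi^{-1}p(E)$ for $E\subset\xi$, summing over short $\xi$, applying the induction hypothesis to each (short) $\xi$, and then $\sum_\xi\Lambda_\xi^{-1}\le\theta$,
\[
p(\gamma\cap\{T_\gamma>kN_0\})\ \le\ \sum_\xi\Lambda_\xi^{-1}\,p\big(\xi\cap\{T_\xi>(k-1)N_0\}\big)\ \le\ \theta\cdot\eps_0\,\theta^{k-1}\ =\ \eps_0\,\theta^{k},
\]
which closes the induction.

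The step I expect to be the real obstacle is the one-step expansion estimate itself; everything else is bookkeeping. Concretely, the two quantitative points that must genuinely be controlled are: (i) that the proliferation of the singularity curves $f^{-1}\partial M,\dots,f^{-N_0}\partial M$ does not outrun the expansion $\lambda^{N_0}$ (the subexponential-complexity fact for finite-horizon dispersing billiards), and (ii) the summability gain $\sum_{|k|\ge k_0}k^{-2}\to0$ from the homogeneity strips, together with the distortion bound needed to make the infimum $\Lambda_i$ legitimate. Since the excerpt already tells us {\bf (P3)}--{\bf (P5)} are verified from known billiard estimates, I would invoke this estimate as part of that same toolbox rather than prove it from scratch.
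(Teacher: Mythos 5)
Your proof is correct, and it follows the same conceptual line as the paper's argument (which is credited to [BSC2]): both rest on the competition between expansion along $u$-curves (quantified by the $\lambda^{-1}+\alpha_0 = \lambda^{-1}+2\sum_{k\ge k_0}k^{-2}$ per-step contraction of preimages, using the homogeneity strips) and the rate at which curves are cut (quantified by the subexponential complexity bound (*), giving branching $\le K_0m+1$ per block of $m$ steps). The difference is packaging. The paper, following [BSC2], labels the surviving short pieces by their $I_k$-itineraries and sums directly over all itineraries of length $n$, obtaining $\theta_1^n$ with $\theta_1 = (K_0m+1)^{1/m}(\lambda^{-1}+\alpha_0)$. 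You instead bundle the same two ingredients into the Chernov--Markarian ``one-step expansion estimate'' $\sum_i \Lambda_i^{-1}\le\theta$ for $f^{N_0}$ and run a clean induction over blocks of $N_0$ steps. Since $\theta$ for the block map is precisely $\theta_1^{N_0}$ (branching $K_0N_0+1$ times itinerary sum $(\lambda^{-1}+\alpha_0)^{N_0}$, with distortion absorbed), the two bookkeeping schemes are equivalent; the induction is just the step-by-step unravelling of the itinerary sum. What your version buys is a more modular statement: once the one-step expansion estimate is granted as a black box, the induction and the containment $\{T_\gamma>kN_0\}\cap(f^{N_0})^{-1}\xi\subset(f^{N_0})^{-1}\{T_\xi>(k-1)N_0\}$ (which you justify correctly, using that cut points remain cut points under further iteration) are purely formal, and your reduction to short sub-curves and to multiples of $N_0$ is handled correctly by absorbing constants into $D_1$. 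You also correctly flag that the one-step expansion estimate itself is where the billiard geometry genuinely enters --- subexponential singularity complexity plus the $k^{-2}$ summability gain from the homogeneity strips with bounded distortion --- which is exactly the hard content the paper also defers to [BSC2] rather than re-deriving.
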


This lemma relies on the following important geometric property of
the class of billiards in question. This choice of $\eps_0>0$ is
closely connected to this property:

\begin{itemize}
\item[(*)]  ([BSC1], Lemma 8.4)
 {\it The number of curves in $\cup_{i=1}^n f^{-i} (\partial M)$ passing through or ending
 in any one point in $M$
is $\leq K_0 n$, where $K_0$ is a constant depending only on the
``table" $X$.}
\end{itemize}

Let $\alpha_0 := 2 \sum_{ k=k_0}^\infty \frac{1}{k^2}$ where $\{I_k,
|k| \geq k_0\}$ are the homogeneity strips, and assume that
$\lambda^{-1} + \alpha_0 < 1$. Choose $m$ large enough that
$\theta_1:= (K_0 m+1)^{\frac{1}{m}}(\lambda^{-1} + \alpha_0) < 1$.
We may then fix $\eps_0 < \delta$ to be small enough that every
$W^u_{\text{loc}}$-curve of $p$-length $\leq \eps_0$ has the
property that it intersects $\leq K_0 m$ smooth segments of
$\cup_1^m f^{-i} (\partial M)$, so that the $f^m $-image of such a
$W^u_{\text{loc}}$-curve has $\le (K_0m+1)$ connected components.

The proof of Lemma 2.3, which follows [BSC2], goes as follows:
Consider a large $n$, which we may assume is a multiple of $m$.
(Once Lemma 2.3 is proved for multiples of $m$, the estimate can be
extended to intermediate values by enlarging the constant $D_1$.) We
label distinguishable trajectories by their $I_k$-itineraries.
Notice that because $f^i\omega$ is the union of a number of
(disconnected) $u$-curves, it is possible for many distinguishable
trajectories to have the same $I_k$-itinerary. Specifically, by (*),
each trajectory of length $jm, j \in {\mathbb Z}^+$, gives birth to
at most $(K_0 m+1)$ trajectories of length $(j+1)m$ with the same
$I_k$-itinerary. To estimate $p(\omega \setminus \{T \le n\})$, we
assume the worst case scenario, in which the $f^n$-images of
subsegments of $\omega$ corresponding to {\it all} distinguishable
trajectories have length $\le \eps_0$. We then sum over all possible
itineraries using bounds on $Df$ along $u$-curves in $I_k$.

We now adapt Lemma 2.3 to the form in which it will be used. Let
$\omega = f^k\omega'$ for some $\omega' \in \tilde {\cal P}_k$ in
the construction in Paragraph 3. As we continue to evolve $\omega$,
$f^n\omega$ is not just chopped up by the discontinuity set, bits of
it that go near $f^{-1}(\partial M)$ will be lost by intersecting
with $f^{k+n} \Omega_{k+n}$, and we need to estimate $p(\omega_n
\setminus \{T \le n\})$ where $\omega_n := \omega \cap
f^k(\Omega_{k+n})$ takes into consideration these intersections and
$T$ is redefined accordingly. {\it A priori} this may require a
larger bound than that given in Lemma 2.3: it is conceivable that
there are segments that will grow to length $\eps_0$ without losing
these ``bits" but which do not now reach this reference length. We
claim that all such segments have been counted, because (i) the
deletion procedure does not create new connected components; it
merely trims the ends of segments adjacent to cut-points; and (ii)
the combinatorics in Lemma 2.1 count all possible itineraries (and
not just those that lead to ``short" segments). This yields the
desired estimate on $p(\omega_n \setminus \{ T \le n \})$, which is
Sublemma 2 in Sect. 8.4 of [Y].

\medskip
\noindent {\bf 5. Growth of ``gaps" of $\Lambda$.} Let $\omega$ be
the subsegment of some $\gamma^u \in \Gamma^u$ connecting the two
$s$-boundaries of $Q$. We think of this as a return in the
construction outlined in Paragraph 3, with the connected components
$\omega'$ of $\omega^c= \omega \setminus \Lambda$ being $f^k$-images
of elements of $\tilde {\cal P}_k$. We define a stopping time $T$ on
$\omega^c$ by considering one $\omega'$ at a time and defining on it
the stopping time in Paragraph 4.

\begin{lemma} There exist $D_2>0$ and $\theta_2<1$ independent of $\omega$
such that
$$
p(\omega_n^c \setminus \{T \le n\}) < D_2 \theta_2^n \qquad {\rm for
\ all} \ n \ge 1.
$$
\end{lemma}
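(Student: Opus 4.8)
The plan is to reduce Lemma 2.5 to Lemma 2.3 by showing that the "gap" $\omega^c$ is, up to harmless trimming, a union of $u$-curves to which the earlier growth estimate applies. The key observation is that $\omega^c = \omega \setminus \Lambda$ decomposes into countably many connected components $\omega'$, each of which is the $f^k$-image of an element of $\tilde{\mathcal P}_k$ for some $k \ge 0$; in particular each $\omega'$ is itself a $u$-curve of $p$-length $\le$ (the length of $\omega$), and the stopping time $T$ in the statement is defined on each $\omega'$ exactly as the stopping time in Paragraph 4. So I would first apply Lemma 2.3 (in the form stated at the end of Paragraph 4, i.e.\ the version that absorbs losses from intersecting with $f^k\Omega_{k+n}$) to each component $\omega'$ separately, obtaining $p(\omega'_n \setminus \{T \le n\}) < D_1 \theta_1^n \cdot (\text{length of } \omega')/\eps_0$ or some such length-proportional bound. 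Summing over all components $\omega'$ and using $\sum_{\omega'} p(\omega') \le p(\omega) \le \diam_p(Q)$, which is a fixed finite constant independent of the particular $\gamma^u$, yields $p(\omega^c_n \setminus \{T \le n\}) < D_2 \theta_2^n$ with $\theta_2 = \theta_1$ and $D_2$ depending only on $D_1$, $\eps_0$, and $\diam_p Q$.

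**The subtle point** — and what I expect to be the main obstacle — is making precise the claim that the components $\omega'$ of $\omega^c$ really are $f^k$-images of partition elements $\tilde{\mathcal P}_k$, and that $T$ restricted to each behaves as in Paragraph 4. This requires unwinding the construction of $\Lambda$ in Paragraph 2 and the return map in Paragraph 3: $\Lambda \cap \gamma^u = \gamma^u \cap (\cup \Gamma^s)$ where $\Gamma^s$ consists of $W^s_\delta(y)$ for $y \in \Omega_\infty$, so the gaps in $\Lambda$ along $\gamma^u$ correspond, via the holonomy $\Theta$ from $\gamma^u$ to $\Omega = W^u_\delta(x_1)$, to the gaps $\Omega \setminus \Omega_\infty$, which are exactly the complements of the sets where $R$ has been declared, i.e.\ the pieces of $\tilde\Omega_k$-partition elements on which no return has yet occurred. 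The absolute continuity and bounded-distortion of $\Theta$ (property {\bf (P5)}) guarantee that transporting the length estimate from $\Omega$ back to $\gamma^u$ costs only a bounded distortion factor, which can be folded into $D_2$. One must also check, as in the "claim" at the end of Paragraph 4, that re-trimming the ends of these gap-segments when they approach $f^{-1}(\partial M)$ does not create new components and hence does not spoil the combinatorial count.

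**In summary**, the proof has three steps: (1) identify the connected components of $\omega^c$ with $f^k$-images of elements of $\tilde{\mathcal P}_k$, so each is a $u$-curve carrying the Paragraph-4 stopping time; (2) apply the adapted Lemma 2.3 to each component, getting a length-proportional exponential bound; (3) sum over components and transport across holonomies, using $\sum p(\omega') \le p(\omega) \le \diam_p Q < \infty$ and bounded distortion of $\Theta$, to absorb everything into new constants $D_2, \theta_2$ independent of $\omega$. The geometric facts $(\dagger)$ and property $(*)$ enter only through their use inside Lemma 2.3, so they need not be reinvoked; the real work is purely bookkeeping about how the gaps of $\Lambda$ sit inside the dynamically-defined sets $\Omega_n$. (This is Sublemma 3 in Sect.\ 8.4 of [Y], and I would cite it there for the omitted distortion estimates.)
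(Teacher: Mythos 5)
Your plan breaks down at step (2): Lemma~2.3 gives a \emph{uniform} bound $p(\omega \setminus \{T \le n\}) < D_1\theta_1^n$ for any $u$-curve $\omega$, not a length-proportional bound of the form $D_1\theta_1^n\, p(\omega)/\eps_0$. No such length-proportional version is stated or implied, and indeed it cannot hold as written: for a $u$-curve of length $\ell \ll \eps_0$ and small $n$, the left side is close to $\ell$, while your proposed right side $D_1\theta_1^n\ell/\eps_0$ is $\ll \ell$ once $n$ exceeds a fixed constant. Without a length-proportional estimate, applying Lemma~2.3 to each of the (countably many) components $\omega'$ and summing gives only $D_1\theta_1^n \cdot (\text{number of components})$, which is infinite. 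This is precisely the difficulty that the paper's argument is designed to overcome, and you cannot route around it by "bookkeeping" and holonomy distortion alone.

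The paper's proof handles this by a mechanism you do not mention. It assigns each gap $\omega'$ a \emph{generation} $q$ --- the first step at which part of $\omega'$ was removed in constructing $\Omega_\infty$ --- and then splits the sum over $\omega'$ into the gaps with $q > \eps n$ and those with $q \le \eps n$. The total $p$-measure of the high-generation gaps is already exponentially small in $n$ by a growth-versus-cutting comparison (part $(I)$). For a low-generation gap $\omega'$, Lemma~2.3 is applied \emph{not} to $\omega'$ itself but to $f^{q-1}\omega'$, which is a genuine $u$-curve that has not yet been cut (only trimmed for homogeneity) and so has a controlled lower bound on its $p$-length; one then pulls back by $f^{q-1}$ using bounded distortion along $u$-curves, picking up a factor $Cp(\omega')/p(f^{q-1}\omega')$. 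The loss $\theta_1^{-q}$ from applying Lemma~2.3 only $n-q$ steps is compensated by $\theta_1^n$ provided $q \le \eps n$, and the remaining sum $\sum_{\omega'} p(\omega') \le p(\omega)$ is finite. Both the generation bookkeeping and the push-forward-then-pull-back step are essential and absent from your proposal; the stable-direction holonomy $\Theta$ and property {\bf (P5)} that you invoke are the wrong distortion estimates for this purpose (what is needed is the {\bf (P4)(b)}-type distortion of $f^{q-1}$ along an unstable curve).
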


The idea of the proof is as follows. We may identify $\omega$ with
$\Omega$ (see Paragraph 2), so that the collection of $\omega'$ is
precisely the collection of gaps in $\Lambda$. We say $\omega'$ is
of generation $q$ if this is the first time a part of $\omega'$ is
removed in the construction of $\Omega_\infty$. There are two
separate estimates:
$$(I) := \sum_{q > \eps n} \sum_{\text{gen}(\omega') = q} p(\omega');
\qquad (II):= \sum_{q \leq \eps n} \sum_{\text{gen}(\omega')= q}
p(\omega_n' \backslash \{T \leq n\}). $$ (I)  has exponentially
small $p$-measure: this follows from a comparison of the growth rate
of $Df$ along $u$-curves {\it versus} the rate at which these curves
get cut (see Paragraph 4). (II) is bounded above by
$$
\sum_{q \le \varepsilon n} \ \sum_{{\rm gen}(\omega')=q}
\frac{Cp(\omega')}{p(f^{q-1}\omega')} \cdot D_1 \theta_1^{n-q-1}\ .
$$
This is obtained by applying the modified version of Lemma 2.3 to
$f^{q-1}\omega'$. A lower bound on $p(f^{q-1}\omega')$ can be
estimated as these curves have not been cut by $f^{-1}(\partial M)$
(though they may have been shortened to maintain homogeneity),
reducing the estimate to $\sum_q \sum_{{\rm gen}(\omega')=q}
p(\omega')$, which is $\le p(\omega)$.

\medskip
\noindent {\bf 6. Return of ``long" segments.} This concerns the
evolution of unstable curves after they have grown ``long", where
``long" has the same meaning as in Paragraph 4. The following
geometric fact from [BSC2] is used:

\medskip
 (**) \ \ {\it Given $\varepsilon_0 > 0, \ \exists n_0$ s.t. for every
homogeneous $W^u_{\text{loc}}$-curve $\omega$ with $p(\omega)
> \varepsilon_0$ and every $q \ge n_0$, $f^q \omega$ contains a
homogeneous segment which $u$-crosses the middle half of $Q$ with $>
2\delta$ sticking out from each side.}

\medskip
We choose $\varepsilon_0 > 0$ as explained in Paragraph 4 above, and
apply (**) with $q =n_0 $ to the segments that arise in Paragraphs 4
and 5
 when the stopping time $T$ is
reached. For example, $\omega$ here may be equal to $f^n \omega''$
where $\omega''$ is a subsegment of the $\omega$ in the last
paragraph of Paragraph 4 with $T|_{\omega''}=n$. We claim that a
fixed fraction of such a segment will make a return within $n_0$
iterates. To guarantee that, two other facts need to be established:
(i) The small bits deleted by intersecting with
$f^{n+k}\Omega_{n+k}$ before the return still leave a segment which
$u$-crosses the middle half of $Q$ with $> 1.5\delta$ sticking out
from each side; this is easily checked. (ii) For $q \le n_0$,
$(f^q)'$ is uniformly bounded on $f^{-q}$-images of homogeneous
segments that $u$-cross $Q$. This is true because a segment
contained in $I_k$ for too large a $k$ cannot grow to length
$\delta$ in $n_0$ iterates.

\medskip
\noindent {\bf 7. Tail estimate of return time.} We now prove $p\{R
\ge n\} \le C_0 \theta_0^n$ for some $\theta_0<1$. On $\Omega$,
introduce a sequence of stopping times $T_1<T_2 < \cdots$ as
follows: A stopping time $T$ of the type in Paragraph 4 or 5 is
initiated on a segment as soon as $T_k$ is reached, and $T_{k+1}$ is
set equal to $T_k+T$. In this process, we stop considering points
that are lost to deletions or have returned to $\Lambda$. The
desired bound follows immediately from the following two estimates:
\begin{itemize}
\item[(i)] There exists $\varepsilon'>0, D_3 \ge 1$, and $\theta_3<1$ such that
$p(T_{[\varepsilon' n]}>n) < D_3 \theta_3^n$.
\item[(ii)] There exists $\varepsilon_1>0$ such that if $T_k|\omega = n$, then
$p(\omega \cap \{R > n+n_0\}) \le (1-\varepsilon_1) p(\omega)$ where
$n_0$ is as in (**) in Paragraph 6.
 \end{itemize}
(ii) is explained in Paragraph 6. To prove (i), we let
$p=[\varepsilon' n]$, decompose $\Omega$ into sets of the form
$A(k_1, \cdots, k_p) =\{x \in \Omega: T_1(x), \cdots, T_p(x)$ are
defined with $T_i=k_i\}$, apply Lemmas 2.1 and 2.2 to each set and
recombine the results. The argument here is combinatorial, and does
not use further geometric information about the system.


\subsection{Sketch of proof of (**) following [BSC2]}

Property (**) is a weaker version of Theorem 3.13 in [BSC2].  We
refer the reader to [BSC2] for detail, but include an outline of its
proof because a modified version of the argument will be needed in
the proof of Proposition 2.2.

We omit the proof of the following elementary fact, which relies on
the geometry of the discontinuity set including Property (*):

\bigskip
\noindent {\bf Sublemma A.} {\it Given any $u$-curve $\gamma$,
through $\mu_\gamma$-a.e. $x \in \gamma$ passes a homogeneous
$W^s_{\delta(x)}(x)$ for some $\delta(x)>0$. The analogous statement
holds for $s$-curves.}

\bigskip

Instead of considering every $W^u_{\text{loc}}$-curve  as required
in (**), the problem is reduced to a finite number of ``mixing
boxes" $U_1, U_2, \ldots, U_k $ with the following properties:

(i) $U_j$ is a hyperbolic product set defined by (homogeneous)
families $\Gamma^u(U_j)$ and $\Gamma^s(U_j)$;

\quad located in the middle third of $U_j$ is an $s$-subset $\tilde
U_j$ with $\nu(\tilde U_j)>0$;

(ii) $\cup \Gamma^u(U_j)$ fills up nearly $100\%$ of the measure of
$Q(U_j)$; and

(iii) every $W^u_{\text{loc}}$-curve $\omega$ with $p(\omega)>
\varepsilon_0$ passes through the middle third of one of the

\quad $Q(U_j)$ in the manner shown in  Fig.~1~(left).

\noindent That (i) and (ii) can be arranged follows from Sublemma A.
That a finite number of $U_j$ suffices for (iii) follows from a
compactness argument.

Next we choose a suitable subset $\tilde U_0 \subset \Lambda$ to be
used in the mixing. To do that, first pick a hyperbolic product set
$U_0$ related to $Q(\Lambda)$ as shown in Fig.~1~(right). We require
that it meet $Q(\Lambda)$ in
 a set of positive measure, that it
sticks out of $Q(\Lambda)$ in the $u$-direction by more than
$2\delta$, and that the curves in $\Gamma^u(U_0)$ fill up nearly
$100\%$ of $Q(U_0)$. Let $\ell_0>0$ be a small number, and let
$\tilde U_0 \subset U_0$ consist of those density points of $U_0\cap
Q(\Lambda)$ with the additional property that if a homogeneous
stable curve $\gamma^s$ with $p(\gamma^s)< \ell_0$ meets such a
point, then $p(\gamma^s \cap U_0)/p(\gamma^s) \approx 1$. For
$\ell_0$ small enough, $\nu(\tilde U_0)>0$ because the foliation
into $W_{\text{loc}}^{u}$-curves is absolutely continuous.

\begin{figure}[htp]
\centering
\resizebox{3 in}{!}{
\includegraphics{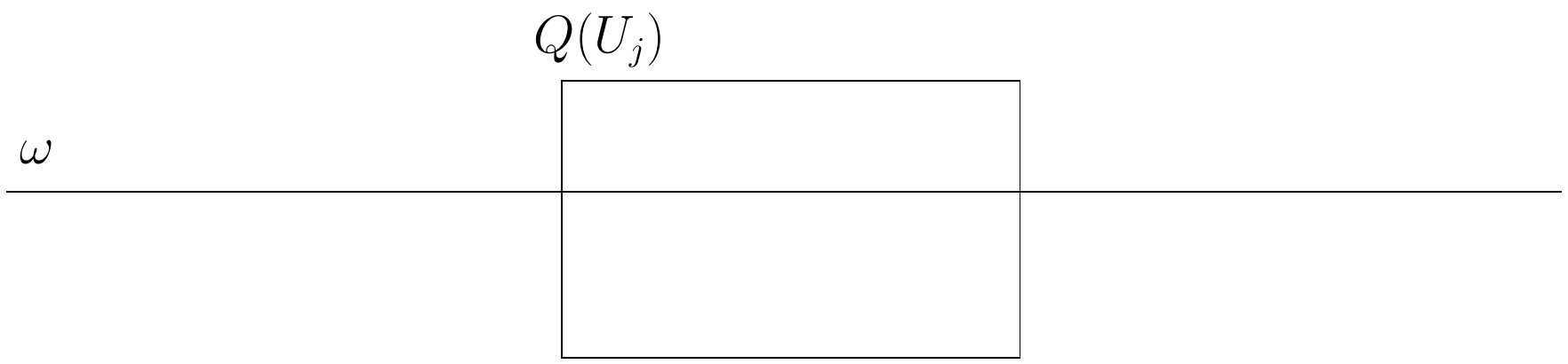}} \hspace{.4 in}
\resizebox{3 in}{!}{
\includegraphics{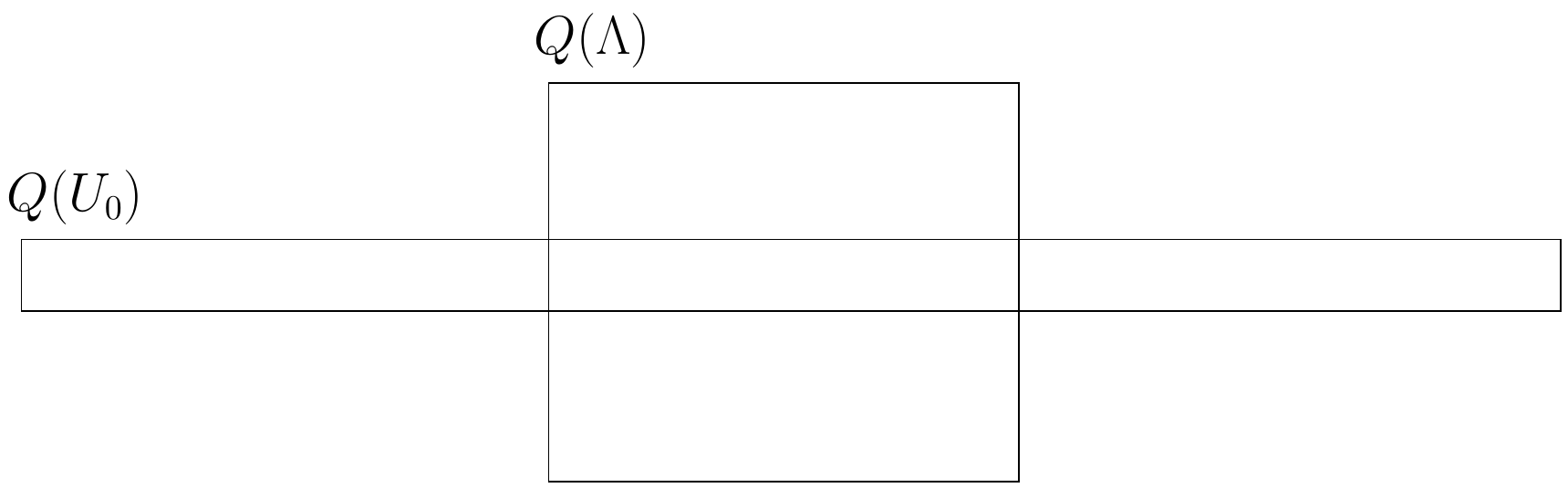}} \\
\label{figure 1}
\caption{{\it Left:} A mixing box $U_j$. {\it Right:}
The target box $U_0$.}
\end{figure}

By the mixing property of $(f,\nu)$, there exists $n_0$ such that
for all $q \ge n_0$, $\nu(f^q(\tilde U_j) \cap \tilde U_0)>0$ for
every $\tilde U_j$. We may assume also that $n_0$ is so large that
for $q \ge n_0$, if $x \in \tilde U_j$ is such that $f^qx \in \tilde
U_0$, then $p(f^q(\gamma^s(x)))< \ell_0$ where $\gamma^s(x)$ is the
stable curve in $\Gamma^s(\tilde U_j)$ passing through $x$. Let $q
\ge n_0$ and $j$ be fixed, and let $x \in \tilde U_j$ be as above.
From the high density of unstable curves in both $U_j$ and $U_0$, we
are guaranteed that there are two elements $\gamma^u_1, \gamma^u_2
\in \Gamma^u(U_j)$ sandwiching the middle third of $Q(U_j)$ such
that for each $i$, a subsegment of $\gamma^u_i$ containing
$\gamma^s(x) \cap \gamma^u_i$ is mapped under $f^q$ onto some $\hat
\gamma^u_i \in \Gamma^u(U_0)$. Let $Q^*=Q^*(q, j)$ be the
$u$-subrectangle of $Q(U_0)$ with $\partial^u Q^* = \hat \gamma^u_1
\cup \hat \gamma^u_2$.

\bigskip
\noindent {\bf Sublemma B.} $f^{-q}\mid_{Q^*}$ is continuous,
equivalently, $Q^*\cap (\cup_0^q f^{i}(\partial M)) = \emptyset$.

\bigskip
Sublemma B is an immediate consequence of the geometry of the
discontinuity set:  By the choice of $ x_1$ in item 2 of Sect.~2.3,
$Q^* \cap
\partial M = \emptyset$.  Suppose $Q^* \cap (\cup_1^q
f^{i}(\partial M)) \not= \emptyset$. Since $\cup_1^q f^{i}(\partial
M)$ is the union of finitely many piecewise smooth (increasing)
u-curves each connected component of which stretches from $\{
\varphi = -\pi/2 \}$ to $\{\varphi = \pi/2\}$, and these curves
cannot touch $\partial ^u Q^*$, a piecewise smooth segment from
$\cup_1^q f^{i}(\partial M)$ that enters $Q^*$ through one component
of $\partial^s Q^*$ must exit through the other. In particular, it
must cross $f^q\gamma^s(x)$, which is a contradiction.

To prove (**), let $\omega$ be a $W^u_{\text{loc}}$-curve with
$p(\omega)>\varepsilon_0$. We pick $U_j$ so that $\omega $ passes
through the middle third of $U_j$ as in (iii) above. Sublemma B then
guarantees that $f^q (\omega\cap f^{-q}Q^*)$ connects the two
components of $\partial^s Q^*$. This completes the proof of (**),
except that we have not yet verified that $f^q(\omega \cap
f^{-q}Q^*)$ is homogeneous.

To finish this last point, we modify the above argument as follows:
First, we define a $W_{\text{loc}}^u$ curve $\gamma$ to be {\it
strictly homogeneous} if for all $n\geq 0$, $f^{-n}\gamma$ is
contained inside one homogeneity strip $I_k(n)$.  Strict homogeneity
for $W_{\text{loc}}^s$ curves is defined analogously.  The
conclusions of Sublemma A remain valid if, in its statement, the
word ``homogeneous'' is replaced by ``strictly homogeneous.''  Thus
the mixing boxes $U_1, \ldots, U_k$ can be chosen so that their
defining families are comprised entirely of strictly homogeneous
local manifolds. Furthermore, if $x_1$ is also chosen as a density
point of points with sufficiently long strictly homogeneous unstable
curves, $\Gamma^u(U_0)$ can be chosen to be comprised entirely of
strictly homogeneous $W_{\text{loc}}^u$-curves.  Having done this,
an argument very similar to the proof of Sublemma B shows that
$f^{-i}Q^* \cap (\cup_k \partial I_k) = \emptyset $ for $0 \leq i
\leq q$, and this completes the proof of (**).


\section{Horseshoes Respecting Holes for Billiard Maps}
\label{horseshoe with hole}

\subsection{Geometry of holes in phase space}
\label{holes description}

We summarize here some relevant geometric properties and explain how
we plan to incorporate holes into our horseshoe construction.

\medskip
\noindent {\bf Holes of Type I}. Recall from Sect.~1.2 that for $q_0
\in \cup \Gamma_i$ and $\sigma \in \Sigma_h(q_0)$, $H_\sigma \subset
M$ is a rectangle of the form $(a,b) \times [-\frac{\pi}{2},
\frac{\pi}{2}]$. We define $\partial H_\sigma := \{a,b\} \times
[-\frac{\pi}{2}, \frac{\pi}{2}]$, i.e. $\partial H_\sigma$ is the
boundary of $H_\sigma$ viewed as a subset of $M$. It will also be
convenient to let $H_0 \subset M$ denote the vertical line $\{q_0\}
\times [-\frac{\pi}{2}, \frac{\pi}{2}]$. To construct a horseshoe
respecting $H_\sigma$, it is necessary to view two nearby points as
having {\it separated} when they lie on opposite sides of $\partial
H_\sigma$ or on opposite sides of $H_\sigma$ in $M \setminus
H_\sigma$. Thus it is convenient to view $f^{-1}(\partial H_\sigma)$
as part of the discontinuity set of $f$. For simplicity, consider
first the case where $q_0$ does not lie on a line in the table X
tangent to more than one scatterer. Then $f^{-1} (\partial
H_\sigma)$ is a finite union of pairs of roughly parallel, smooth
$s$-curves. (Recall that $s$-curves are negatively sloped, with
slopes uniformly bounded away from $0$ and $- \infty$.) Each of the
curves comprising $f^{-1} (\partial H_\sigma)$ begins and ends in
$\partial M \cup f^{-1}(\partial M)$, that is to say, the geometric
properties of $f^{-1}(\partial H_\sigma) \cup f^{-1} (\partial  M)$
are similar to those of $f^{-1} (\partial M)$. Likewise, $f(\partial
H_\sigma)$ is a finite of union of pairs of (increasing) $u$-curves
that begin and end in $\partial M \cup f(\partial M)$, and it will
be convenient to regard that as part of the discontinuity set of
$f^{-1}$.

Let $N_{\varepsilon}(\cdot)$ denote the $\varepsilon$-neighborhood
of a set.  We will need the following lemma.

\begin{lemma}
For each $\varepsilon > 0$ there is an $h > 0$ such that for each
$\sigma \in \Sigma_h$, $H_{\sigma} \subset N_{\varepsilon}(H_0)$,
$fH_{\sigma} \subset N_{\varepsilon}(fH_0)$, and $f^{-1}H_{\sigma}
\subset N_{\varepsilon}(f^{-1}H_0)$. \end{lemma}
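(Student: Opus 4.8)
The plan is to establish the three inclusions $H_\sigma \subset N_\varepsilon(H_0)$, $fH_\sigma \subset N_\varepsilon(fH_0)$, and $f^{-1}H_\sigma \subset N_\varepsilon(f^{-1}H_0)$ one at a time, treating the two hole types separately where needed, and reducing in each case to a uniform-continuity/compactness argument.

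First I would handle the first inclusion, $H_\sigma \subset N_\varepsilon(H_0)$. For holes of Type I this is essentially immediate: $H_\sigma = (a,b)\times[-\tfrac{\pi}{2},\tfrac{\pi}{2}]$ with $(a,b)$ an open interval in $\cup\Gamma_i$ contained in the $h$-neighborhood of $q_0$, while $H_0 = \{q_0\}\times[-\tfrac{\pi}{2},\tfrac{\pi}{2}]$; hence every point of $H_\sigma$ is within $h$ (in the $r$-coordinate) of $H_0$, so taking $h<\varepsilon$ suffices. For holes of Type II one must recall from Sect.~1.2 that $H_\sigma = f(B_\sigma)$, where $B_\sigma\subset M$ consists of phase points whose billiard flow trajectory enters $\sigma\times\mathbb{S}^1$ before the next collision; here I would define $B_0$ (resp.\ $H_0$) analogously for the infinitesimal hole $\{q_0\}$, i.e.\ the set of $(r,\varphi)$ whose forward flow trajectory passes through the point $q_0$, and note $H_0 = f(B_0)$. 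Then $B_\sigma \to B_0$ in the Hausdorff sense as $h\to 0$ because the flight map (``time to pass near $q_0$'') depends continuously on initial data away from tangencies, and since $\sigma$ lives in the $h$-ball around $q_0$; applying the (piecewise) continuity of $f$ on the relevant region then gives $H_\sigma\subset N_\varepsilon(H_0)$ for small $h$.

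Next, for the inclusions involving $f$ and $f^{-1}$, the key point is that $f$ and $f^{-1}$ are piecewise $C^1$ (indeed piecewise smooth) with the discontinuity sets $f^{-1}\partial M$ and $f(\partial M)$ being finite unions of compact piecewise-smooth monotone curves (Important Geometric Facts $(\dagger)$). On each closed piece of the complement of the discontinuity set, $f$ (resp.\ $f^{-1}$) is uniformly continuous. The set $H_0$ is a single vertical line; $fH_0$ and $f^{-1}H_0$ are compact. Given $\varepsilon>0$, I would choose $h_1$ so small that $H_\sigma\subset N_{\varepsilon/2}(H_0)$ (and correspondingly for $B_\sigma$ in the Type~II case) by the previous paragraph, then use uniform continuity of $f$ on a compact neighborhood of $H_0$ — chopped along the finitely many discontinuity curves — to find $h_2\le h_1$ so that $f(N_{h_2}(H_0)) \subset N_\varepsilon(fH_0)$, and similarly for $f^{-1}$. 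The one subtlety is that $H_0$ may itself straddle a discontinuity curve of $f$ (the vertical line $\{q_0\}\times[-\tfrac\pi2,\tfrac\pi2]$ can meet $f^{-1}\partial M$), so the neighborhood $N_\varepsilon(fH_0)$ must be understood to include the images of $H_0$ from both sides of any such curve; since $N_\varepsilon$ denotes a genuine $\varepsilon$-neighborhood in $M$ (not fiber-wise), this is automatically accommodated, and one simply takes $h$ small enough that no new discontinuity curves intervene between $H_0$ and $H_\sigma$ — which is possible because there are only finitely many such curves and $q_0$ is at positive distance from those not passing through it; for those passing through $q_0$, continuity from each side handles the matching.

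The main obstacle I anticipate is the Type~II case together with the presence of tangential trajectories: the map $\sigma\mapsto B_\sigma$ (and hence $H_\sigma = fB_\sigma$) is governed by the geometry of which flow-trajectories graze or pass through $\sigma$, and near grazing directions this dependence can be delicate. I would deal with this by invoking the finite-horizon hypothesis and the convexity of $\partial\sigma$ (with the curvature bound), which together force $B_\sigma$ to have boundary curves converging to those of $B_0$ in the $C^0$ (Hausdorff) sense as $h\to 0$; the quantitative estimates on $B_\sigma$, $H_\sigma$ deferred to Sect.~3.1 are exactly what make this rigorous, so in the proof I would cite that discussion. Beyond this, everything else is a routine compactness/uniform-continuity packaging, and the three inclusions follow by taking $h$ to be the minimum of the finitely many thresholds produced above.
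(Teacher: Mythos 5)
The paper itself does not supply a proof of Lemma~3.1 (it writes ``it can be easily verified, and we leave the proof to the reader''), so there is nothing to compare line-by-line; but the paragraph immediately following the lemma in Sect.~3.1 pins down exactly where your argument has a gap. For $q_0$ lying on a line segment in the table that is tangent to more than one scatterer, the \emph{na\"{\i}ve} sets $fH_0$ and $f^{-1}H_0$ are too small, and the lemma as literally stated with those sets would be \emph{false}; the paper accounts for this by redefining $f^{-1}H_0$ (and analogously $fH_0$, $H_0$ for Type~II) to include certain second-order iterates of $q_0$.

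Concretely, in the paper's example ($q_0\in\Gamma_3$ on a line tangent to $\Gamma_2$ at $r_2$ and to $\Gamma_1$ at $r_1$, with $\Gamma_2$ between $q_0$ and $r_1$): for $\sigma\neq 0$, some trajectories leaving $\Gamma_1$ near $(r_1,\pi/2)$ slip past $\Gamma_2$ and enter $\sigma$ in one collision, so $f^{-1}H_\sigma$ contains a small triangular region near $(r_1,\pi/2)\subset\Gamma_1\times\{\pi/2\}$. But $(r_1,\pi/2)$ is \emph{not} in the na\"{\i}ve $f^{-1}H_0$, which lives near $(r_2,\pi/2)\subset\Gamma_2\times\{\pi/2\}$; it is the further pre-image $f^{-1}(r_2,\pi/2)$. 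No amount of uniform continuity or ``continuity from each side of a discontinuity curve'' repairs this, since the offending piece of $f^{-1}H_\sigma$ is at macroscopic distance from the na\"{\i}ve $f^{-1}H_0$. Your closing remark (``one simply takes $h$ small enough that no new discontinuity curves intervene'') is exactly what cannot be arranged when $q_0$ sits on such a multi-tangency line, and your ``continuity from each side'' comment addresses only one-sided limits of $f$ at a single tangency, not this second-generation pre-image phenomenon. So the argument needs an extra step: either restrict from the outset to the generic case (no multiple tangencies through $q_0$), or adopt the paper's enlarged definitions of $fH_0$, $f^{-1}H_0$ (and of $H_0$ itself for Type~II) and re-run your compactness argument relative to those sets. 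With that fix in place, the remainder of your approach — handling $H_\sigma\subset N_\varepsilon(H_0)$ directly, then propagating via uniform continuity of $f,f^{-1}$ on the closures of the finitely many smooth pieces near $H_0$, with Hausdorff convergence of $B_\sigma$ for Type~II — is a sound way to carry out the verification the paper leaves to the reader.
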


As $f$ is discontinuous, Lemma 3.1 is not immediate.  However, it can
be easily verified, and we leave the proof to the reader.

Points $q_0$ that lie on lines in $X$ with multiple tangencies to
scatterers lead to slightly more complicated geometries, and special
care is needed when defining what is meant by $fH_0$ and
$f^{-1}H_0$.  For example, consider the case where $q_0 \in
\Gamma_3$ lies on a line that is tangent to $\Gamma_1$ and
$\Gamma_2$, but which is not tangent to any other scatterer
including $\Gamma_3$.  Suppose further that $r_1 \in \Gamma_1$, $r_2
\in \Gamma_2$ are the points of tangency, that $r_2$ is closer to
$q_0$ than $r_1$ is, that no other scatterer touches the line
segment $[q_0, r_1]$, and that $\Gamma_1$ and $\Gamma_2$ both lie on
the same side of $[q_0, r_1]$; see Fig.~2~(left). Let $\sigma$ be a
small hole of Type I with $q_0 \in \sigma$.  Then in $\Gamma_2
\times [-\pi/2, \pi/2]$, $f^{-1}(\partial H_{\sigma})$ appears as
described above.  However, $\Gamma_2$ ``obstructs'' the view of
$\sigma$ from $\Gamma_1$, and so in $\Gamma_1 \times [-\pi/2,
\pi/2]$, $f^{-1}(H_{\sigma})$ is a small triangular region whose
three sides are composed of a segment from $\Gamma_1 \times
\{\pi/2\}$, a segment from $f^{-1}(\Gamma_2 \times \{\pi/2\})$, and
a \emph{single} segment from $f^{-1}(\partial H_{\sigma})$. See
Fig.~2~(right). As a consequence, when we write $f^{-1} H_0$, we
include in this set not just $(r_2, \pi/2)$, but also $f^{-1}(r_2,
\pi/2) = (r_1, \pi/2)$. This is necessary in order for Lemma 3.1 to
continue to hold.  Aside from such minor modifications, the case of
multiple tangencies is no different than when they are not present,
and we leave further details to the reader.

\begin{figure}[htp]
\centering
\resizebox{1.5 in}{!}{
\includegraphics{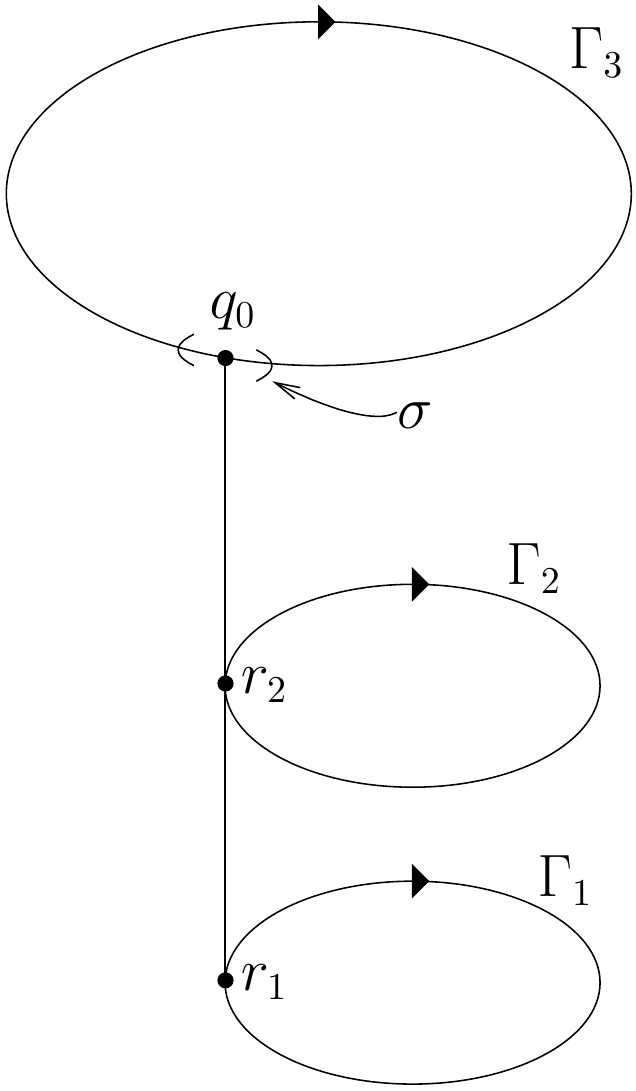}}
\hspace{.35 in}
\resizebox{4.5 in}{!}{
\includegraphics{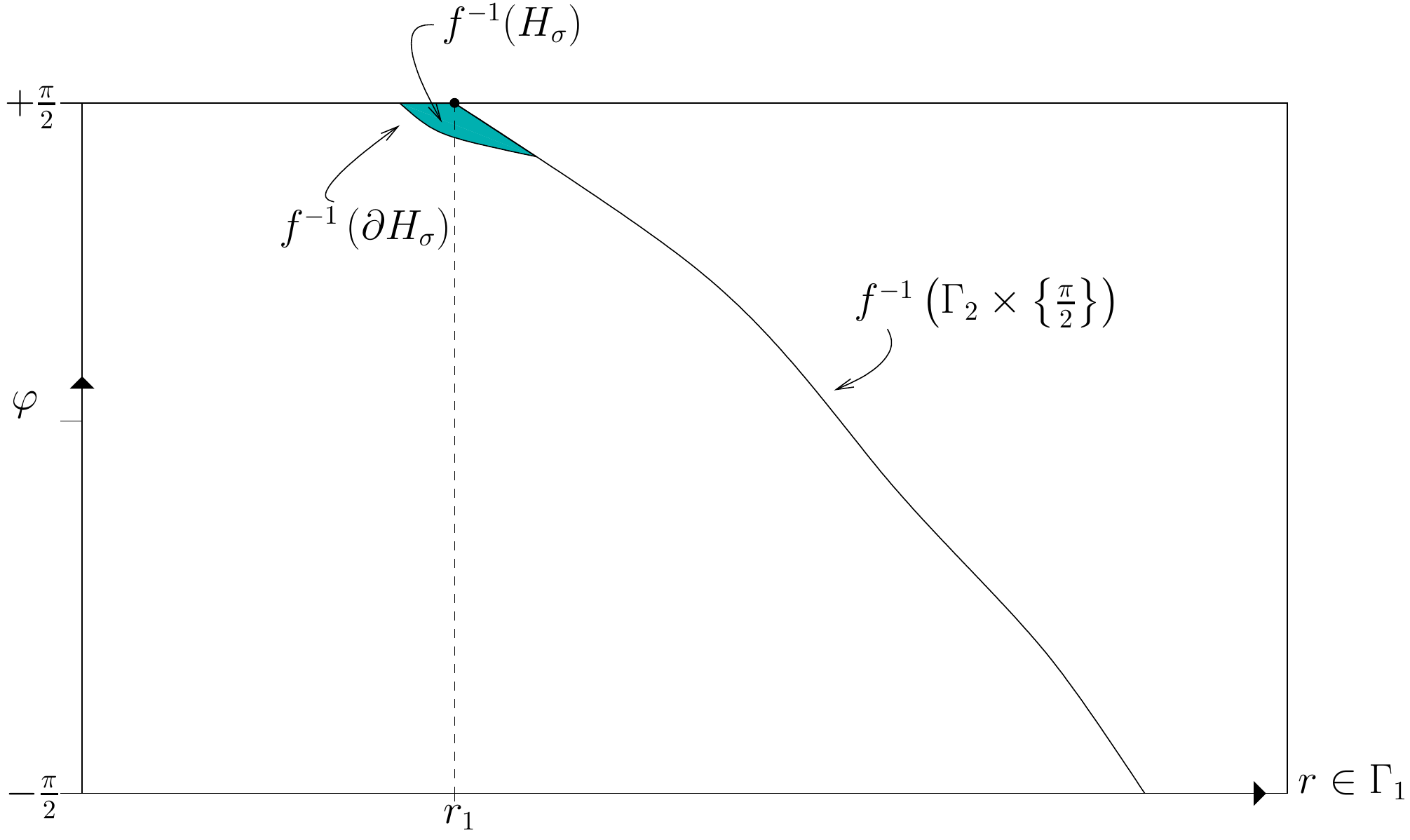}}
\label{figure 2} \caption{An infinitesmal hole aligned with multiple
tangencies. {\it Left:} $q_0$ lies on a line segment in the
billiard table $X$ that is tangent to two scatterers. {\it Right:}
Induced singularity curves in the subset $\Gamma_1 \times [-\pi/2,
\pi/2]$ of the phase space $M$.}
\end{figure}

\medskip
\noindent {\bf Holes of Type II.} For simplicity, consider first the
case where $q_0$ does not lie on a line in the ``table" $X$ tangent
to more than one scatterer. Recall from Sect.~1.2 that ``the hole''
$H_\sigma$ here is taken to be $f(B_\sigma)$ where $B_\sigma$
consists of points in $M$ which enter $\sigma \times \mathbb{S}^1$
under the billiard flow before returning to the section $M$. As with
holes of Type I, we define $\partial H_\sigma$ to be the boundary of
$H_\sigma$ viewed as a subset of $M$. The set $B_\sigma$ as a subset
of $M$ has similar geometric properties as $f^{-1}H_\sigma$ for Type
I holes, {\it i.e.}, $f^{-1}(\partial H_\sigma) \backslash (\partial M \cup f^{-1}( \partial M))$
consists of pairs
of negatively sloped curves ending in $\partial  M \cup
f^{-1}(\partial M)$. The slopes of these curves are uniformly
bounded (independent of $\sigma $) away from $- \infty$ and $0$.
For the reasons discussed, it will be convenient to view this set as
part of the discontinuity set of $f$. The infinitesimal hole $H_0
\subset M$ is defined in the natural way, and the analog of Lemma
3.1 can be verified.  We will say more about the geometry of
$H_\sigma$ in Sect.~3.3.

Points $q_0$ that lie on multiple tangencies lead to slightly more
complicated geometries, and special care is needed when defining
what is meant by the sets $f^{-1}H_0$, $H_0$, and $f H_0$ as in the
case of Type I holes.

\medskip
\noindent {\it Further generalizations on holes of Type II:} In
addition to the generalizations discussed in Sect.~1.3, sufficient
conditions on the holes allowed in $\Sigma_h$ for Prop. 2.2 to
remain true are the following, as can be seen from our proofs:

\smallskip
\noindent (1) There exist $N$ and $L$ for which the following hold
for all sufficiently small $h$:

\begin{enumerate} \vspace{-10 pt}
    \item[(a)]  $f^{-1} (\partial H_\sigma)$, $\partial H_\sigma$, and
$f(\partial H_\sigma)$ each consist of no more than $N$ smooth
curves, all of which have length no greater than $L$. \vspace{-8 pt}
    \item[(b)] For each $\sigma \in \Sigma_h$,
    $f^{-1} (\partial H_\sigma) \backslash (\partial M \cup f^{-1}( \partial M))$
consists of piecewise smooth, negatively sloped curves
(with slopes uniformly bounded away from $- \infty$ and $0$),
and the end points  of these curves must lie on $\partial M \cup f^{-1}(\partial M)$.
\end{enumerate}

\vspace{-8 pt}
\noindent
(2) The analog of Lemma 3.1 holds.

\smallskip

\noindent Thus it would be permissible to allow a convex hole
$\sigma$ to be in $\Sigma_h$ that did not have a ${C}^3$ simple
closed curve with strictly positive curvature as its boundary. For
example, conditions (a) and (b) above hold if $\partial \sigma$ is a
piecewise $C^3$ simple closed curve which consists of finitely many
smooth segments that are either strictly positively curved or flat.
As another generalization, consider the case when any line segment
in the table $X$ with its endpoints on two scatterers that passes
through the convex hull  of $\sigma$  also intersects $\sigma$. Then
it is no loss of generality to replace $\sigma$ by its convex hull.
Using this, one can often verify that the set $H_\sigma$ that arises
satisfies properties (a) and (b) above, even if $\sigma$ is not
itself convex. See Fig.~3.

\begin{figure}[htp]
\centering
\resizebox{5 in}{!}{
\includegraphics{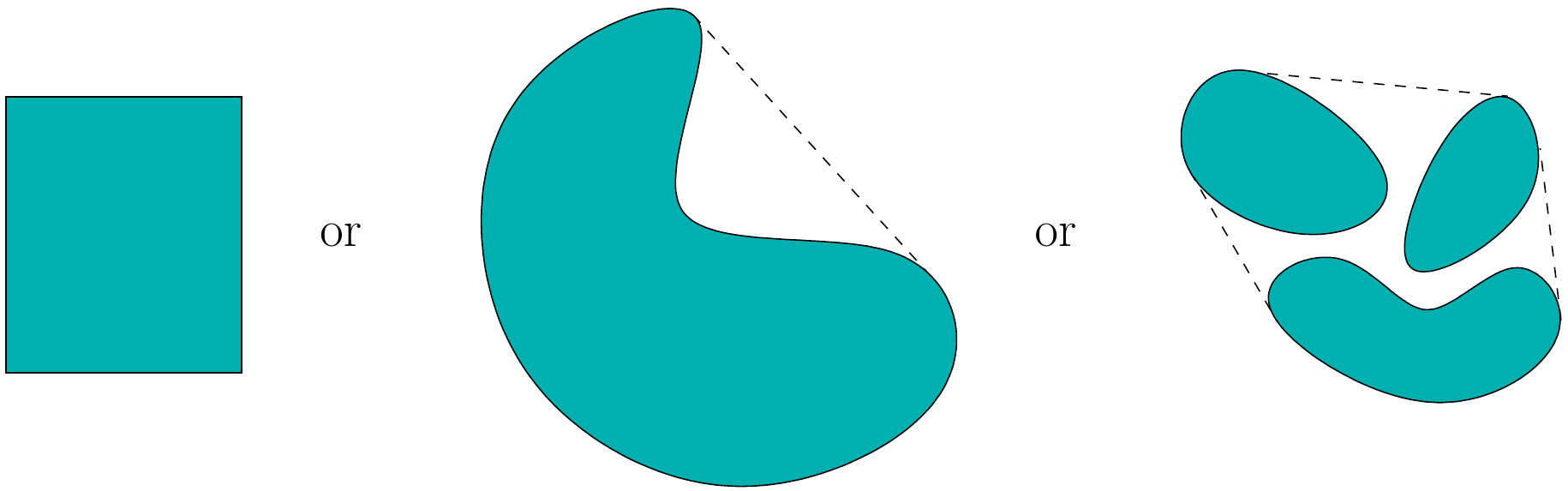}}
\label{figure 3}
\caption{Examples of Type II holes that are permissible.}
\end{figure}

\medskip
In Sect. 3.2, the discussion is for holes of Type I with a single
interval deleted. The proof follows {\it mutatis mutandis} for holes
of Type II, with the necessary minor modifications discussed in
Sect.~3.3.


\subsection{Proof of Proposition~\ref{prop:horseshoe} (for
holes of Type I)} \label{proof of horseshoe}

The idea of the proof is as follows. First we construct a horseshoe
$(\Lambda^{(0)}, R^{(0)})$ with the desired properties for the {\it
infinitesimal hole} $\{q_0\}$. Then we construct
$(\Lambda^{(\sigma)}, R^{(\sigma)})$ for all $\sigma \in
\Sigma_h(q_0)$, and show that with $\Lambda^{(\sigma)}$ sufficiently
close to $\Lambda^{(0)}$ in a sense to be made precise,
$(\Lambda^{(\sigma)}, R^{(\sigma)})$ will inherit the desired
properties with essentially the same bounds. To ensure that
$\Lambda^{(\sigma)}$ can be taken ``close enough" to
$\Lambda^{(0)}$, we decrease the size of the hole, {\it i.e.}, we
let $h \to 0$. Now the constructions of
 $(\Lambda^{(0)}, R^{(0)})$ and
$(\Lambda^{(\sigma)}, R^{(\sigma)})$ are essentially identical. To
avoid repeating ourselves more than needed, we will carry out the
two constructions simultaneously. It is useful to keep in mind,
however, that logically, the case of the infinitesimal hole is
treated first, and some of the information so obtained is used to
guide the arguments for positive-size holes.

As explained in Sect.~3.1, to ensure that the horseshoe respects the
hole, it is convenient to include $f^{-1}(\partial H_\sigma)$ as
part of the discontinuity set for $f$. Since $H_\sigma$ will be
viewed as a perturbation of $H_0$, we include $f^{-1}(H_0)$ in this
set as well. The following convention will be adopted when we
consider a system with hole $H_\sigma$:
\begin{itemize}
\item[(a)] Suppose for definiteness $q_0 \in \Gamma_1$. The new phase
space $M_\sigma$ is obtained from $M$ by cutting $\Gamma_1 \times
[-\frac{\pi}{2}, \frac{\pi}{2}]$ along the lines comprising
 $H_0 \cup \partial
H_\sigma$, splitting it into three connected components.
\item[(b)] As a consequence, the new discontinuity set of $f$ is
$f^{-1}(\partial M_\sigma)$, and the new discontinuity set of
$f^{-1}$ is $f(\partial M_\sigma)$.
\end{itemize}
We use the notation ``$\sigma = 0$" for the infinitesimal hole, so
that $M_0$ is obtained from $M$ by cutting along $H_0$.

Notice immediately that this changes the definitions of stable and
unstable curves, in the sense that if $\gamma$ was a stable curve
for the system without holes, then $\gamma$ continues to be a stable
curve if and only if (i) $\gamma \cap \partial M_\sigma =
\emptyset$, and (ii)  $f^n(\gamma) \cap f^{-1}\partial M_\sigma =
\emptyset$ for all $n \ge 0$; a similar characterization holds for
unstable curves. All objects constructed below will be
$\sigma$-dependent, but we will suppress mention of $\sigma$ except
where it is necessary. Observe also that the Important Geometric Facts
($\dagger$) in Sect.~2.3 with $f^{-1}\partial M_\sigma$ instead of
$f^{-1}\partial M$ as the new discontinuity set remains valid.

We now follow sequentially the 7 points outlined in Sect.~2.3 and
discuss the modifications needed. These modifications, along with
two additional points (8 and 9) form a complete proof of Proposition
2.2. We believe we have prepared ourselves adequately in Sects.~2.3
and 2.4 so that the discussion to follow can be understood on its
own, but encourage readers who wish to see proofs complete with all
technical detail to read the rest of this section alongside the
relevant parts of [Y] and [BSC2].

The notation within each item below is as in Sect.~2.3.

\bigskip
\noindent 1. The relationships $\lambda=\lambda_1^4$ and
$\delta=\delta_1^4$ are as before, and the sets $B^{(\sigma)
\pm}_{\lambda_1, \delta_1}$ are defined in a manner similar to that
in Sect.~2.3. For example,
$$
B^{(\sigma) +}_{\lambda_1, \delta_1} = \{x \in M_\sigma: d(x,
\partial M_\sigma) \ge \delta_1 \ {\rm and} \ d(f^nx,
f^{-1}\partial M_\sigma) \ge \delta_1 \lambda_1^{-n} \ \forall
n \ge 0\}.
$$
As in Sect.~2.3, the condition on $d(f^nx, f^{-1}\partial M_\sigma)$
is to ensure the existence of stable curves, and the
necessity for $x$ to be away from $\partial M_\sigma$ is obvious
(cf. footnote in item 1 of Section 2.3).
Properties (i) and (ii) continue to hold for each $\sigma$ given the
geometry of the new discontinuity set. With regard to the choice of
$\delta_1$, we let $\delta_1$ be as in [Y], and shrink it if
necessary to ensure that $B^{(0) +}_{\lambda_1, 2\delta_1} \cap
B^{(0) -}_{\lambda_1, 2\delta_1}$ has positive $\nu$-measure away
from $f^{-1}(\partial M_0) \cup \partial M_0 \cup f(\partial M_0)$.
This is where the sets $\Lambda^{(\sigma)}$ will be located (see
Paragraph 2).

The following lemma relates $B_{\lambda_1, \delta_1}^{(\sigma) \pm}$
and $B_{\lambda_1, \delta_1}^{(0) \pm}$:

\begin{lemma} (i) For all $\sigma \in \Sigma_h$, we have
$B_{\lambda_1, \delta_1}^{(\sigma) \pm} \subset B_{\lambda_1,
\delta_1}^{(0) \pm}$.

\noindent (ii) As $h \to 0$,
$$
\sup_{\sigma \in \Sigma_h} \nu(B_{\lambda_1, \delta_1}^{(0) +}
\setminus B_{\lambda_1, \delta_1}^{(\sigma) +})\ \to \ 0, \qquad
\sup_{\sigma \in \Sigma_h} \nu(B_{\lambda_1, \delta_1}^{(0) -}
\setminus B_{\lambda_1, \delta_1}^{(\sigma) -})
 \ \to \ 0.
$$
\end{lemma}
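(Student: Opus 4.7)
The plan for part (i) is a direct inclusion-of-singularity-sets argument. From the definitions in Sect.~3.2, $\partial M_\sigma = \partial M_0 \cup \partial H_\sigma$, so $\partial M_0 \subset \partial M_\sigma$, and hence $f^{-1}\partial M_0 \subset f^{-1}\partial M_\sigma$. Distance to a larger set is no greater than distance to a subset, so the defining inequalities of $B^{(\sigma)+}_{\lambda_1,\delta_1}$ are strictly stronger than those of $B^{(0)+}_{\lambda_1,\delta_1}$, and the containment follows at once. The ``$-$'' case uses $f\partial M_0 \subset f\partial M_\sigma$ in the same way.

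For part (ii), I decompose $B^{(0)+}_{\lambda_1,\delta_1} \setminus B^{(\sigma)+}_{\lambda_1,\delta_1}$ according to which defining condition of $B^{(\sigma)+}$ fails: either (a) $d(x, \partial H_\sigma) < \delta_1$ (while $d(x, \partial M_0) \ge \delta_1$), or (b) for some $n \ge 0$, $d(f^n x, f^{-1}\partial H_\sigma) < \delta_1\lambda_1^{-n}$ (while $d(f^n x, f^{-1}\partial M_0) \ge \delta_1\lambda_1^{-n}$). Given $\varepsilon > 0$, I invoke Lemma~3.1 to choose $h$ small so that $\partial H_\sigma \subset N_\varepsilon(H_0)$ and $f^{-1}\partial H_\sigma \subset N_\varepsilon(f^{-1}H_0)$ uniformly in $\sigma \in \Sigma_h$. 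Since $H_0 \subset \partial M_0$ and $f^{-1}H_0 \subset f^{-1}\partial M_0$, the lower bounds from $B^{(0)+}$ exclude $x$ from $N_{\delta_1}(H_0)$ in case (a) and $f^n x$ from $N_{\delta_1\lambda_1^{-n}}(f^{-1}H_0)$ in case (b); the Lemma~3.1 inclusions then confine $x$ (resp.\ $f^n x$) to an annular shell of width $\varepsilon$ around $H_0$ (resp.\ $f^{-1}H_0$).

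Since $H_0$ and $f^{-1}H_0$ are finite unions of smooth compact curves of fixed total length, each such shell has $\nu$-measure at most $C\varepsilon$, and by $f$-invariance of $\nu$ the $n$-th contribution has measure at most $C\varepsilon$ as well. The main obstacle is that the naive sum $\sum_n C\varepsilon$ diverges. I resolve this by producing a second per-$n$ bound, coming directly from the uniformly bounded total length of the $s$-curves comprising $f^{-1}\partial H_\sigma$: $\nu(N_{\delta_1\lambda_1^{-n}}(f^{-1}\partial H_\sigma)) \le C'\delta_1\lambda_1^{-n}$. Splitting the sum at the cutoff $N(\varepsilon) := \lceil \log(\delta_1/\varepsilon)/\log\lambda_1 \rceil$, I use the annulus bound $C\varepsilon$ for $n \le N(\varepsilon)$ to obtain a contribution of order $\varepsilon|\log\varepsilon|$, and the geometric bound $C'\delta_1\lambda_1^{-n}$ for $n > N(\varepsilon)$ to obtain a contribution of order $\varepsilon$. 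Both tend to $0$ as $\varepsilon \to 0$ with constants independent of $\sigma$, proving the uniform statement. The ``$-$'' case is identical, using the containment $f\partial H_\sigma \subset N_\varepsilon(fH_0)$ from Lemma~3.1 in place of $f^{-1}\partial H_\sigma \subset N_\varepsilon(f^{-1}H_0)$.
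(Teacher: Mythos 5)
Your proof is correct and follows essentially the same route as the paper's: part (i) via $\partial M_0 \subset \partial M_\sigma$, and part (ii) by combining the annular-shell bound of order $\varepsilon$ per level (via Lemma~3.1 and invariance of $\nu$) for $n$ below the cutoff $\approx \log(\delta_1/\varepsilon)/\log\lambda_1$ with the geometric tail bound $\mathrm{const}\cdot\delta_1\lambda_1^{-n}$ above it, yielding a total of order $\varepsilon\log(\delta_1/\varepsilon)$. No substantive differences.
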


\noindent {\bf Proof:} (i) follows immediately from $\partial
M_\sigma \supset \partial M_0$. As for (ii), let $\varepsilon > 0$
be given. Recall that $N_\alpha(\cdot)$ denotes the
$\alpha$-neighborhood of a set. By Lemma 3.1 we may choose $h$ small
enough that for all $\sigma \in \Sigma_h$, $\partial H_\sigma \in
N_\varepsilon(H_0)$ and $f^{-1}(\partial H_\sigma) \in
N_\varepsilon(f^{-1}H_0)$. Then if $x\in
B_{\lambda_1,\delta_1}^{(0)+} \setminus
B_{\lambda_1,\delta_1}^{(\sigma)+}$, either $x\in
N_{\delta_1}(\partial H_\sigma) \setminus N_{\delta_1}(H_0)$, or
$$x\in \cup_{n\ge0}f^{-n}(N_{\delta_1\lambda_1^{-n}}(f^{-1}\partial H_\sigma)
\setminus N_{\delta_1\lambda_1^{-n}}(f^{-1}H_0)).
$$
We estimate the $\nu$-measure of the right side separately for
$\cup_{n \ge n_\varepsilon}$ and $\cup_{n< n_\varepsilon}$ where
$n_{\varepsilon} = \inf \{n\geq 0 : \delta_1 \lambda_1^{-n} \leq
\eps \} \approx \frac{\ln(\frac{\delta_1}{\eps})}{\ln \lambda_1}$.
For $n \ge n_\varepsilon$, the measure in question is
$$
\le \sum_{n \geq n_{\eps}} \nu(N_{\delta_1
\lambda_1^{-n}}(f^{-1}(\partial H_{\sigma})) \le {\rm const} \cdot
\sum_{n \geq n_{\eps}} \delta_1\lambda_1^{-n} \le {\rm const} \cdot
\varepsilon\ .
$$
Here we have used that $f^{-1}(\partial H_{\sigma})$ consists of a
finite number of smooth compact curves the total length of which is
bounded independent of $\sigma$.  Adding to this that these curves
are within a distance $\varepsilon$ of the curves in $f^{-1}H_0$, we
see that for each $n< n_{\varepsilon}$,
$$
\nu(N_{\delta_1 \lambda_1^{-n}}(f^{-1}(\partial H_{\sigma}))
\setminus N_{\delta_1 \lambda_1^{-n}} (f^{-1}H_0)) \le {\rm const}
\cdot \varepsilon.
$$
Similarly, $\nu(N_{\delta_1}(\partial H_\sigma) \setminus
N_{\delta_1}(H_0)) \le {\rm const} \cdot \varepsilon$. Hence
$$\nu(B_{\lambda_1,\delta_1}^{(0) +} \setminus
B_{\lambda_1, \delta_1}^{(\sigma) +}) \le {\rm const} \cdot
(\varepsilon + (n_{\varepsilon}+1) \varepsilon) \le {\rm const}
\cdot \varepsilon \ln(\frac{\delta_1}{\varepsilon})
$$
which tends to $0$ as $\varepsilon \to 0$. \hfill $\square$

\bigskip

\noindent 2. To construct the Cantor sets, we first pick $x_1^{(0)}$
as a density point of $B^{(0) +}_{\lambda_1, 2\delta_1} \cap B^{(0)
-}_{\lambda_1, 2\delta_1}$ at least $2 \delta_1$ away from
$f^{-1}(\partial M_0) \cup \partial M_0 \cup f(\partial M_0)$ and
the boundaries of the homogeneity strips, and begin to construct
$\Lambda^{(0)}$ with $\Omega = W^u_\delta(x_1^{(0)})$. We then do
the same for each $\sigma$, {\it i.e.}, pick $x_1^{(\sigma)}$ as a
density point of $B^{(\sigma) +}_{\lambda_1, 2\delta_1} \cap
B^{(\sigma) -}_{\lambda_1, 2\delta_1}$ and begin to construct
$\Lambda^{(\sigma)}$ centered at $x_1^{(\sigma)}$ -- except that for
reasons to become clear, we will want $d(x_1^{(\sigma)}, x_1^{(0)})<
\delta_2$ where $\delta_2>0$ is determined by properties of
$(\Lambda^{(0)}, R^{(0)})$ (requirements will appear below, and in
items 6 and 9). Suffice it to say here that however small $\delta_2$
may be, Lemma 3.2 guarantees that this can be done by shrinking $h$.
Once $x_1^{(\sigma)}$ is chosen, we set $\Omega =
W_\delta^u(x_1^{(\sigma)})$ and
$$
\Omega_n = \{ y \in \Omega : d(f^iy, f^{-1}(\partial M_\sigma)) \ge
\delta_1\lambda_1^{-i} \ {\rm for }\ 0\leq i\leq n\}.
$$
Then the sets $\Omega_\infty$, $\Gamma^s$, $\Gamma^u$ and $\Lambda^{(\sigma)}$ are constructed as before.

That $Q(\Lambda^{(\sigma)}) \approx Q(\Lambda^{(0)})$ follows
immediately from the proximity of $x_1^{(\sigma)}$ to $x_1^{(0)}$.
Since $\delta$ is fixed, $\mu^u(\Lambda ^{(\sigma)}) \approx
\mu^u(\Lambda ^{(0)})>0$ can be arranged by taking $\delta_2$
sufficiently small and using Lemma 3.2 with $h$ sufficiently small.
This proves Proposition 2.2(b)(i). With the separation time
happening sooner due to the enlarged discontinuity set, {\bf
(P3)}--{\bf (P5)} remain true with the same $C$ and $\alpha$ for the
closed system; in other words, Proposition 2.2(b)(iii) requires no
further work.

\bigskip
\noindent 3.  To arrange for mixing properties (not done in [Y]), we
will need to delay the return times to $\Lambda$ by forbidding
returns before time $R_2$ for some $R_2 \ge R_1$ determined by
$(\Lambda^{(0)}, R^{(0)})$; see Lemma 3.4. This aside, the
construction of $f^R$ is as before. The matching of Cantor sets
argument should be looked at again since the Cantor sets are
different, but the proof goes through as before because the sets are
dynamically defined.

Notice that for $\omega \in \tilde {\cal P}_n$, $f^i\omega$ is
either entirely in the hole or outside of the hole, as is
$f^i(\Lambda^s)$ where $\Lambda^s$ is the $s$-subset of $\Lambda$
associated with $\omega$, for $0 \leq i \leq n$; this is a direct
consequence of our taking the boundary of the hole into
consideration in our definition of the discontinuity set. Together
with the fact that $\Lambda$ is away from $\partial M_\sigma$, it
ensures that the generalized horseshoe we are constructing respects
the hole.

\bigskip
\noindent 4. This is where one of the more substantial modifications
occur: Lemma 2.3, which is based largely on the competition between
expansion along $u$-curves and the rate at which they are cut, is
clearly affected by the additional cutting due to our enlarged
discontinuity set. The condition (*) in Sect.~2.3 must now be
replaced by

\begin{lemma} There exists $K_1$ such that for any
$m \in {\mathbb Z}^+$, there exists $\eps_0>0$ with the property
that for any $u$-curve with $p(\omega)< \eps_0$, $f^m(\omega)$ has
$\le (K_1m^2+4)$ connected components with respect to the enlarged
discontinuity set.
\end{lemma}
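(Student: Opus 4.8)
{\it Proof plan.} The plan is to split the enlarged discontinuity set of $f^m$ into its ordinary billiard part and the part created by the hole, bound the first by the argument already used for Lemma~2.3, and control the second by a simple complexity estimate for the backward iterates of the finitely many segments $H_0\cup\partial H_\sigma$ along which $M$ is cut. Recall that the discontinuity set of $f^m$ on $M_\sigma$ is $\bigcup_{i=1}^m f^{-i}(\partial M_\sigma)=\mathcal B_m\cup\mathcal H_m$, where, viewing $\partial M_\sigma=\partial M\cup H_0\cup\partial H_\sigma$ as a subset of $M$, we set $\mathcal B_m:=\bigcup_{i=1}^m f^{-i}(\partial M)$ and $\mathcal H_m:=\bigcup_{i=1}^m f^{-i}(H_0\cup\partial H_\sigma)$. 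Each smooth curve making up $\mathcal B_m\cup\mathcal H_m$ is an $s$-curve, and a $u$-curve meets each such smooth $s$-curve in at most one point (opposite monotonicity in $(r,\varphi)$, together with the uniform slope bounds), so
\[
\#\{\text{connected components of }f^m(\omega)\}\ \le\ 1+\#(\omega\cap\mathcal B_m)+\#(\omega\cap\mathcal H_m).
\]
The term $\#(\omega\cap\mathcal B_m)$ is handled exactly as in the proof of Lemma~2.3: by property~(*) at most $K_0 m$ smooth segments of $\mathcal B_m$ pass through any point, and a compactness argument furnishes $\varepsilon_0'>0$, depending on $m$, so that any $u$-curve of $p$-length $<\varepsilon_0'$ meets at most $K_0 m$ of them. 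It remains to bound $\#(\omega\cap\mathcal H_m)$ by a multiple of $m^2$.

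Write $H_0\cup\partial H_\sigma=\zeta_1\cup\zeta_2\cup\zeta_3$ as a union of (vertical) segments, so $\mathcal H_m=\bigcup_{i=1}^m\bigcup_{j=1}^3 f^{-i}(\zeta_j)$. First I would establish, for fixed $i$ and $j$ and with $f^{-i}(\zeta_j)$ decomposed into smooth curves relative to the ordinary singularity set $\partial M\cup\mathcal B_i$ of the point map $f^i$, the complexity bound: \emph{at most $2K_0 i+c_2$ such smooth curves pass through or end in any point $x\in M$}, where $c_2$ depends only on the table. Indeed, $f^{-i}(\zeta_j)=\{x:f^i(x)\in\zeta_j\}$; by property~(*) and the finiteness of $\partial M$, at most $K_0 i+c_2'$ smooth curves of $\partial M\cup\mathcal B_i$ pass through or end in $x$, and these cut a neighborhood of $x$ into at most $2K_0 i+c_2$ sectors (with $c_2$ a suitable multiple of $c_2'$), on the interior of each of which $f^i$ is a diffeomorphism onto its image; since $\zeta_j$ is a single smooth segment, its preimage passes through $x$ in at most one branch per sector, which proves the bound. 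The key features are that this bound is linear in $i$, uses only the \emph{billiard} singularity structure, and is insensitive to any further subdivision of these curves inside $M_\sigma$ (which does not affect intersection points with $\omega$); in particular no circularity arises.

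Given the complexity bound, a second (routine) compactness argument --- uniform in $\sigma\in\Sigma_h$ once $h$ is small, Lemma~3.1 being used to control the dependence on $\sigma$ of the finitely many curves $f^{-i}(\zeta_j)$, $i\le m$ --- yields $\varepsilon_0''>0$ such that any $u$-curve with $p(\omega)<\varepsilon_0''$ meets at most $2K_0 i+c_2$ of the smooth curves of $f^{-i}(\zeta_j)$, for all $i\le m$ and all $j$. Summing over $i\le m$ and $j\in\{1,2,3\}$,
\[
\#(\omega\cap\mathcal H_m)\ \le\ \sum_{i=1}^m\sum_{j=1}^3(2K_0 i+c_2)\ \le\ (6K_0+3c_2)\,m^2.
\]
Taking $\varepsilon_0:=\min(\varepsilon_0',\varepsilon_0'')$, the number of connected components of $f^m(\omega)$ is at most $1+K_0 m+(6K_0+3c_2)m^2\le K_1 m^2+4$, where $K_1:=7K_0+3c_2$ depends only on the table and on the admissible family of holes, and in particular on neither $m$ nor $\sigma\in\Sigma_h$. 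When $q_0$ lies on a line with multiple tangencies, $f^{-1}(\zeta_j)$ acquires finitely many extra pieces of the type described in Sect.~3.1 --- still $s$-curves, in uniformly bounded number --- and the argument goes through unchanged.

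The step I expect to be the crux is the ``sectors'' bound of the second paragraph: that a point on $k$ smooth curves of the billiard singularity set $\partial M\cup\mathcal B_i$ has a neighborhood cut into $O(k)$ pieces on whose interiors $f^i$ is a diffeomorphism, so that the preimage of a fixed smooth segment has at most $O(k)$ branches through that point. This is essentially a re-derivation, with an arbitrary smooth segment in place of $\partial M$, of the structural fact underlying property~(*) ([BSC1], Lemma~8.4); it is standard for finite-horizon dispersing billiards but needs to be set down with care, in particular as regards the homogeneity strips. The two compactness steps, which manufacture a single $\varepsilon_0$ valid for all $i\le m$ and all $\sigma\in\Sigma_h$, are routine but tedious.
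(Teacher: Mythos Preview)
Your approach is correct but takes a longer route than the paper's. You pull the hole boundary back by $f^{-i}$, bound the local complexity of the resulting family of $s$-curves via a sectors argument, and then invoke a $\sigma$-uniform compactness step to manufacture $\eps_0''$. The paper instead works in forward time: having used the same $\eps_0$ as in Lemma~2.3 to cut $\omega$ by the ordinary billiard singularities into $\le K_0 m+1$ pieces $\omega_j$, it observes that each $f^k(\omega_j)$, $0\le k\le m$, is a genuinely connected (increasing) $u$-curve and therefore meets the three vertical segments $H_0\cup\partial H_\sigma$ in at most three points. Summing over $k$ gives at most $3(m+1)$ additional cuts per $\omega_j$, hence at most $(K_0 m+1)(3(m+1)+1)$ components altogether.

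The paper's argument is considerably shorter: it avoids your sectors estimate entirely, requires no compactness argument for the hole part, and is automatically uniform in $\sigma$ (the vertical lines move, but a $u$-curve meets each in at most one point regardless of where they sit). Your route is more general in spirit --- it would handle adjoining any finite family of transversal curves to the discontinuity set, not just vertical ones --- but here that generality is not needed, and its cost is exactly the two steps you yourself flag as the crux and as ``routine but tedious.'' In particular, the $\sigma$-uniformity of your compactness step does require care (one must pass to limits in both $\omega$ and $\sigma$ simultaneously), whereas the paper sidesteps this issue completely.
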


\noindent {\bf Proof:} Let $m \in {\mathbb Z}^+$ be given. As in
Sect.~2.3, choose $\eps_0 > 0$ small enough such that if $\omega$ is
a $u$-curve with $p (\omega) < \eps_0$, $f^m(\omega)$ has $\le (K_0
m + 1)$ connected components with respect to the original
discontinuity set $f^{-1}S_0$. Let $\omega_j$ be the $f^{-m}$-image
of one these connected components. This means that for $ 0 \le k\le
m$, $f^k(\omega_j)$ is, in reality, a connected $u$-curve even
though it may not be connected with respect to our enlarged
discontinuity set. Since $f^k \omega_j$ is an (increasing)
$u$-curve, it can meet the three vertical lines making up $(\partial
H_{\sigma}) \cup H_0$ in no more than three points.
(As the slopes $d \varphi / dr$ of $u$-curves are never less than the
curvature of $\Gamma_i$ at $r$, connected $u$-curves cannot wrap around
the cylinder $\Gamma_i \times [ -\pi / 2 , \pi / 2 ]$ and meet
$(\partial H_\sigma) \cup H_0$ more than once.)
Hence the
cardinality of $\{\omega_j \cap \bigcup_{k=0}^m f^{-k}((\partial
H_{\sigma}) \cup H_0 ) \} $ is $\leq 3(m+1)$, and as $(\partial
H_{\sigma}) \cup H_0$ is the additional set added to $\partial M$ to
create $\partial M_\sigma$, it follows that $f^m \omega$ has $\le
(K_0 m + 1) \cdot (3(m+1) + 1)$ connected components with respect to
the enlarged discontinuity set. \hfill $\square$

\medskip
Using Lemma 3.3, one adapts easily the proof of Lemma 2.3 to the
present setup with $\theta_1 = (K_1m^2+4)^{\frac{1}{m}}
(\lambda^{-1} + \alpha_0)$, where $m$ is chosen large enough so that
this number is $<1$. The constant $D_1$ depends only on the
properties of $Df$ and is unchanged. Hence Lemma 2.3 is valid with
$D_1$ and $\theta_1$ modified but independent of $\sigma$.
As in Section 2.3, these estimates can then be adapted to estimate
$p( \omega_n \backslash \{ T \le n \})$.

\bigskip
\noindent 5. Lemma 2.4 remains valid with modified constants which
are independent of $\sigma$.  Returning to the sketch of the proof
provided in Sect. 2.3, we see that both sets of estimates boil down
to  the geometry of the new discontinuity set and the rates of
growth {\it versus} cutting, which has been taken care of for the
enlarged discontinuity set in Paragraph 4 above.

\bigskip
\noindent 6. We need to show that there exist $n_1$ and
$\varepsilon_1>0$ independent of $\sigma$ such that for every
homogeneous $u$-curve with $p$-length $> \eps_0$, a fraction $\ge
\varepsilon_1$ of $\omega$ returns within the next $n_1$ steps.
Before we enlarged the discontinuity set, this property followed
from property (**) in Sect.~2.3.  We replace (**) here with the
following:

\begin{lemma} Given $\varepsilon_0 > 0$, provided $h$ and $\delta_2$
are sufficiently small, there exists $n_1$ such that the following
holds for each $\sigma \in \Sigma_h$: for every homogeneous
$W_{\text{loc}}^u$-curve $\omega$ with $p(\omega) > \varepsilon_0$
and each $q \in \{n_1, n_1 + 1 \}, f^q \omega$ contains a
homogeneous segment that u-crosses the middle half of
$Q(\Lambda^{(\sigma)})$ with greater than $2\delta$ sticking out
from each side.
\end{lemma}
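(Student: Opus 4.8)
The plan is to mimic the proof of property $(**)$ sketched in Sect.~2.4, but carried out uniformly in $\sigma$ and with the enlarged discontinuity set $f^{-1}(\partial M_\sigma)$ in place of $f^{-1}(\partial M)$. The key observation is that the argument for $(**)$ uses the mixing property of $(f,\nu)$ together with the geometry of the discontinuity set, and by Lemma~3.1 the extra curves in $\partial M_\sigma$ are all contained in an $\eps$-neighborhood of the fixed finite family $f^{-1}H_0 \cup H_0 \cup fH_0$ when $h$ is small. So first I would fix the mixing boxes $U_1,\ldots,U_k$ and the target box $U_0$ exactly as in Sect.~2.4, relative to the \emph{closed} system and to $\Lambda^{(0)}$ (note $U_0$ is tied to $Q(\Lambda^{(0)})$, which is why $\delta_2$-proximity of $x_1^{(\sigma)}$ to $x_1^{(0)}$ enters). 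These are finitely many fixed objects; applying the mixing of $(f,\nu)$ and Sublemma~A (in its strictly-homogeneous form) gives a single $n_1$ so that $\nu(f^q(\tilde U_j)\cap \tilde U_0)>0$ for all $q\in\{n_1,n_1+1\}$ and all $j$, and so that stable curves through the relevant points are contracted to $p$-length $<\ell_0$.

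The main modification is in the analog of Sublemma~B: I must show that for the $u$-subrectangle $Q^* = Q^*(q,j)$ obtained from the two sandwiching unstable curves, $f^{-i}Q^*$ avoids $\cup_{k}\partial I_k$ \emph{and} the new pieces of the discontinuity set, $\cup_{i=0}^q f^i(\partial M_\sigma)$, for $0\le i\le q$. The point for $f^i(\partial M)$ and $\partial I_k$ is unchanged: $\cup_1^q f^i(\partial M)$ is a finite union of increasing $u$-curves each stretching from $\{\varphi=-\pi/2\}$ to $\{\varphi=\pi/2\}$, so any such curve entering $Q^*$ through one component of $\partial^s Q^*$ must exit through the other and hence cross $f^q\gamma^s(x)$, a contradiction. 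For the extra curves $f^i(f(\partial M_\sigma))$, $1\le i\le q$ — i.e. forward images of $fH_0\cup f(\partial H_\sigma)$ — these are again increasing $u$-curves beginning and ending in $\partial M\cup f(\partial M)$ (by the geometry described in Sect.~3.1), so the same "enters one $s$-side, must exit the other, hence crosses $f^q\gamma^s(x)$" argument applies. The one genuinely new case is that a segment of the enlarged discontinuity set might enter $Q^*$ through $\partial^s Q^*$ and \emph{terminate} inside $Q^*$ (since $f(\partial H_\sigma)$ curves end in $\partial M\cup f(\partial M)$, this can only happen if $\partial M$ or $f(\partial M)$ passes through $Q^*$). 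Here I would invoke that $Q^*\cap\partial M=\emptyset$ by the choice of $x_1^{(0)}$ far from $\partial M$ (item~2 of Sect.~2.3), together with the proximity $d(x_1^{(\sigma)},x_1^{(0)})<\delta_2$, to rule this out for $\delta_2$ small; a symmetric argument handles $f(\partial M)$. Thus $f^{-q}|_{Q^*}$ is continuous with respect to the enlarged discontinuity set, and in fact maps $Q^*$ to a strictly homogeneous rectangle.

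Given the modified Sublemma~B, the proof of $(**)$ goes through verbatim: for a homogeneous $W^u_{\mathrm{loc}}$-curve $\omega$ with $p(\omega)>\eps_0$, pick the $U_j$ through whose middle third $\omega$ passes (property (iii), unaffected by holes since the $U_j$ are fixed), and then $f^q(\omega\cap f^{-q}Q^*)$ is a homogeneous segment $u$-crossing the middle half of $Q(U_0)$, hence of $Q(\Lambda^{(\sigma)})$ once $\delta_2$ is small enough that $Q(\Lambda^{(\sigma)})$ sits appropriately inside $U_0$, with more than $2\delta$ sticking out on each side. The quantifier over $q\in\{n_1,n_1+1\}$ (rather than a single $q=n_0$) is arranged by choosing $n_1$ so both values work — this is the device that will later kill periodicity and give mixing of the surviving dynamics. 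I expect the main obstacle to be the new terminating-segment case in Sublemma~B: one has to be careful that shrinking $h$ (to make $\partial H_\sigma$ close to $H_0$) and shrinking $\delta_2$ (to keep $x_1^{(\sigma)}$ near $x_1^{(0)}$, and $Q^*$ bounded away from $\partial M\cup f(\partial M)$) can be done consistently, and that the finitely many mixing boxes and $n_1$ can be fixed \emph{before} $h$ and $\delta_2$ are sent to zero — which they can, since all of $U_0,U_1,\ldots,U_k$ and the mixing time depend only on the closed system and on $\Lambda^{(0)}$.
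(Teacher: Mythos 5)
Your overall plan — run the argument for $(**)$ with the enlarged discontinuity set and perturb in $h$ and $\delta_2$ — is in the right spirit, and several of the ingredients you identify (Lemma 3.1, the finiteness of the mixing data, fixing $n_1$ before $h,\delta_2$ are shrunk) are exactly what the paper uses. But there is a genuine gap at the crucial step, the modified Sublemma~B, and the gap is precisely where you elect a direct combinatorial argument instead of a perturbative one.

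When you argue that a piece of $\cup_{i} f^i(\partial H_\sigma \cup H_0)$ entering $Q^*$ through one component of $\partial^s Q^*$ ``must exit through the other,'' you are tacitly using the same fact the paper states explicitly for $f^i(\partial M)$: that such curves \emph{cannot touch} $\partial^u Q^*$. For $f^i(\partial M)$ this holds because the curves $\hat\gamma^u_i$ making up $\partial^u Q^*$ are local unstable manifolds and therefore avoid the forward singularity set. But your $\Gamma^u(U_0)$ is built from the closed system (or at best from the $\sigma=0$ system via $\Lambda^{(0)}$). Those unstable curves have no reason to avoid $f^i(\partial H_\sigma)$ for $\sigma\neq 0$ — and once a piece of $f^i(\partial H_\sigma)$ can enter or exit $Q^*$ through $\partial^u Q^*$, the ``must cross $f^q\gamma^s(x)$'' contradiction evaporates. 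The terminating-segment case you flag is the less serious issue (its endpoints lie on $\cup_j f^j(\partial M)$, which is already excluded from $Q^*$); the missing case is the transversal crossing of $\partial^u Q^*$.

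The paper avoids this by a different decomposition: it first proves a strengthened $(**)'$ entirely within the $\sigma=0$ system — with middle \emph{fourth} and $4\delta$ instead of middle half and $2\delta$, and with the finitely many rectangles $Q^*(q,j)$ staying clear of $f^{-1}\partial M_0$ by a \emph{definite positive amount}. Only then does it bring in Lemma~3.1: since $f^{-1}\partial M_\sigma$ is within $\varepsilon$ of $f^{-1}\partial M_0$, the clearance persists for $h$ small, and since the whole statement involves only $q\in\{n_1,n_1+1\}$ and finitely many $j$, the $\varepsilon$ can be chosen uniformly before $h$ is sent to zero. The extra slack in ``fourth'' and ``$4\delta$'' is then spent on passing from $Q(\Lambda^{(0)})$ to $Q(\Lambda^{(\sigma)})$ via $\delta_2$. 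You mention all of these tools in your opening and closing paragraphs, but in the body you try to prove $Q^*\cap\bigcup_i f^i(\partial M_\sigma)=\emptyset$ directly, which is where the argument breaks. To repair it, replace the direct treatment of the $\partial H_\sigma$-curves with the paper's two-step scheme: establish the quantitative clearance for $\sigma=0$, then perturb.
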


Once Lemma 3.4 is proved, the fact that a fraction $\varepsilon_1$
(independent of $\sigma$) has the desired properties follows from
derivative estimates as in Sect.~2.3 and our uniform lower bound on
$\mu^u(\Lambda^{(\sigma)})$.  The reason we want $q$ to take two
consecutive values in the statement of Lemma 3.4 has to do with the
mixing property in item 9 below.

\medskip
\noindent {\bf Proof:} Fix $\varepsilon_0 > 0$.  We first prove the
following for the case $\sigma=0$:

\begin{itemize}
\item[(**)']
 {\it  For $\sigma = 0$, there exists $n_1$ such that
any homogeneous $W_{\text{loc}}^u$-curve $\omega$ with $p(\omega) >
\varepsilon_0$ and every $q \geq n_1$, $f^q\omega$ contains a
homogeneous segment that u-crosses the middle fourth of
$Q(\Lambda^{(0)})$ with greater than $4 \delta$ sticking out from
each side.  }
\end{itemize}
The proof of (**)' is completely analogous to the proof of (**)
outlined in Sect.~2.4. Sublemmas A and B continue to hold due to the
similar geometry of the discontinuity set.

Notice that unlike (**)', the assertion in Lemma 3.4 is only for
$q=n_1$ and $n_1+1$, so that its proof involves only a finite number
of mixing boxes $U_j$ and a finite number of iterates. This will be
important in the perturbative argument to follow.

Consider now $\sigma \neq 0$, and consider a homogeneous unstable
curve $\omega$ with $p(\omega) > \varepsilon_0$. First, $\omega$
continues to be an unstable curve with respect to the discontinuity
set $f^{-1}\partial M_0$, so by the proof of (**)', for $q \in \{n_1, n_1 + 1\}$
and every $j$, there is a rectangular region $Q^*=Q^*(q,j)$ such
that (i) $Q^*$ $u$-crosses the middle fourth of $Q(\Lambda^{(0)})$
with $> 4\delta$ sticking out, (ii) $f^{-q}Q^*$ is an
$s$-subrectangle in the middle third of $Q(U_j)$, and (iii) for
$i=0,1, \cdots, q$, $f^{-i}Q^*$ stays clear of $f^{-1}\partial M_0$
by some amount. Lemma 3.1 ensures that for $h$ small enough, (iii)
continues to hold with $f^{-1}\partial M_0$ replaced by
$f^{-1}\partial M_\sigma$. Finally, provided $\delta_2$ is small
enough, (i) holds for $Q(\Lambda^{(\sigma)})$ with $> 2\delta$ sticking out on each side.
\hfill $\square$

\bigskip
\noindent 7. Once steps 4, 5 and 6 have been completed, the argument
here is unchanged (as it is largely combinatorial), guaranteeing
constants $C_0$ and $\theta_0$ independent of $\sigma$ with $p\{R
\ge n\} \le C_0\theta_0^n$. This completes the proof of Proposition
2.2(a)(i) and (b)(ii).

\bigskip
We have reached the end of the 7 steps outlined in Sect.~2.3. Two
items remain:

\medskip
\noindent 8. That $\bar n(h) \to \infty$ as $h \to 0$ is easy:
Orbits from $\Lambda^{(\sigma)}$ start away from $H_0$ and cannot
approach $f^{-1}H_0$ faster than a fixed rate. Thus using Lemma 3.1, we
can arrange for orbits starting from $\Lambda^{(\sigma)}$ to stay
out of $H_\sigma$ for as long as we wish by taking $h$ small.

\bigskip
\noindent 9. The mixing of $(\Lambda^{(\sigma)}, R^{(\sigma)})$
follows from

\begin{lemma} There exists $R_2 \ge R_1$ (independent of
$\sigma$) such that for small enough $h$, the construction in Step 3
can be modified to give the following:

(i) no returns are allowed before time $R_2$, and

(ii) at both times $R_2$ and $R_2+1$, there are $s$-subsets of
$\Lambda^{(\sigma)}$ making full returns.
\end{lemma}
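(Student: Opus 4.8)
The plan is to establish Lemma 3.5 by showing that the mixing property of $(f,\nu)$, combined with the uniform proximity of $\Lambda^{(\sigma)}$ to $\Lambda^{(0)}$, forces full returns to occur at two consecutive times. First I would treat the case $\sigma = 0$. We already have (**)$'$ from Step 6, giving an $n_1$ such that for every homogeneous $u$-curve $\omega$ of $p$-length $>\eps_0$ and every $q \ge n_1$, $f^q\omega$ contains a homogeneous segment $u$-crossing the middle fourth of $Q(\Lambda^{(0)})$ with $>4\delta$ sticking out. The construction in Step 3 allows flexibility in \emph{when} we declare a return (we are not forced to take the first one); so I would choose $R_2$ large enough that (a) $R_2 \ge R_1$ (matching of Cantor sets still works), and (b) $R_2$ and $R_2+1$ both exceed $n_1$, and additionally large enough that by the time a $u$-curve has grown ``long'' (via the stopping-time machinery of Lemma 2.3/Step 4, which happens within an exponentially-controlled number of iterates) there is still room to push $n_1$ more steps and land inside $[R_2, R_2+1]$. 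Concretely: the argument in Step 7 already shows that from $\Omega$, after finitely many steps a positive-measure $s$-subset reaches a ``long'' segment at some bounded stopping time; applying (**)$'$ at the two values $q$ with stopping-time-plus-$q \in \{R_2, R_2+1\}$ yields $s$-subsets $\Lambda^s(R_2)$ and $\Lambda^s(R_2+1)$ making full returns. The key point enabling \emph{consecutive} times is precisely that (**)$'$/Lemma 3.4 holds for a \emph{range} of $q$, so we can hit both parities.

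Next I would transfer this to $\sigma \neq 0$ by a perturbation argument identical in spirit to the proof of Lemma 3.4 in Step 6. Since $\Lambda^{(\sigma)}$ is constructed with $x_1^{(\sigma)}$ within $\delta_2$ of $x_1^{(0)}$ and since $Q(\Lambda^{(\sigma)}) \approx Q(\Lambda^{(0)})$ with $\mu^u(\Lambda^{(\sigma)}) \ge \kappa$, the rectangular regions $Q^* = Q^*(q,j)$ produced by the $\sigma = 0$ argument still $u$-cross the middle half of $Q(\Lambda^{(\sigma)})$ with $>2\delta$ sticking out, provided $\delta_2$ is small. The only genuinely new issue is that the finitely many iterates $f^{-i}Q^*$, $0 \le i \le R_2+1$, which were shown to avoid $f^{-1}\partial M_0$ by a definite margin, must also avoid $f^{-1}\partial M_\sigma$ and stay out of $H_\sigma$; but this is exactly what Lemma 3.1 gives us once $h$ is small enough, since $f^{-1}\partial M_\sigma$ is within $\eps$ of $f^{-1}H_0$ and the whole argument involves only \emph{finitely many} iterates (here is where it matters that Lemma 3.4 was stated for just $q = n_1, n_1+1$ rather than all large $q$). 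The condition $f^\ell \Lambda^s(n) \cap H_\sigma = \emptyset$ for $0 \le \ell \le n$ — i.e. that the surviving dynamics, not just the closed dynamics, are mixing — then follows from the same margin estimate together with Step 8's observation that orbits from $\Lambda^{(\sigma)}$ cannot approach $f^{-1}H_0$ faster than a fixed rate.

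I would organize the write-up as: (1) fix $R_2$ from the $\sigma=0$ data and verify $R_2 \ge R_1$ so the matching-of-Cantor-sets argument survives the delayed-return modification; (2) using (**)$'$ at $q$ and $q+1$, exhibit $s$-subsets of $\Lambda^{(0)}$ making full returns at $R_2$ and $R_2+1$ that stay clear of $H_0$ and $f^{-1}H_0$ by a definite amount in all intermediate iterates; (3) invoke Lemma 3.1 and smallness of $\delta_2$, $h$ to carry the same $s$-subsets over to $\Lambda^{(\sigma)}$, checking that the finitely many preimages avoid the enlarged discontinuity set and the hole; (4) conclude mixing of both $(\Lambda^{(\sigma)}, R^{(\sigma)})$ and its surviving dynamics, which together with Step 7's general full-return-at-all-large-$n$ statement (now available once we have full returns at two coprime-gap times $R_2$, $R_2+1$) completes Proposition 2.2(a)(ii).

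The main obstacle I anticipate is bookkeeping around the \emph{consecutive-time} requirement: producing full returns at a single large time is routine from (**)$'$, but guaranteeing both $R_2$ and $R_2+1$ requires that the ``growth to long'' stopping time and the subsequent application of (**)$'$ compose to hit an interval of length two, and one must check that the $s$-subset surviving to the ``long'' stage at a \emph{fixed} bounded time has positive $\mu_\gamma$-measure on unstable leaves — this uses the uniform lower bound $\mu^u(\Lambda^{(\sigma)}) \ge \kappa$ and the derivative bounds, but the interplay of the two consecutive values with the homogeneity-preserving subdivisions needs care. The $\sigma$-uniformity is then comparatively painless because everything rests on the finitely-many-iterates perturbation already rehearsed in Step 6.
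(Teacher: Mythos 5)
Your overall strategy matches the paper's: for $\sigma=0$, let a $u$-curve $\omega$ in $\tilde{\cal P}_{R_1'}$ grow to $p$-length $> \eps_0$ at some finite time $R_1' \ge R_1$, set $R_2 = R_1' + n_1$, and use the two consecutive values $q = n_1, n_1+1$ in Lemma 3.4 to produce full returns at $R_2$ and $R_2+1$; then transfer to $\sigma \neq 0$ by a finite-iterate closeness argument. This is the right idea, and you correctly identify why the two-value statement of Lemma 3.4 matters for forcing consecutive return times.

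However, you mis-locate the genuinely new $\sigma$-uniformity concern. Your step (3) is devoted to checking that the finitely many preimages $f^{-i}Q^*$, $0 \le i \le R_2+1$, avoid $f^{-1}\partial M_\sigma$ and $H_\sigma$; but that is precisely the content of Lemma 3.4, which is already stated and proved uniformly for all $\sigma \in \Sigma_h$, so there is nothing further to verify about the $Q^*$ regions. The one new thing Lemma 3.5 must establish --- and which your proposal never addresses --- is the $\sigma$-independence of $R_1'$ itself: since $R_2 = R_1' + n_1$ must be fixed in advance of choosing $\sigma$, the time at which the first long element of $\tilde{\cal P}_n^{(\sigma)}$ appears must be shown not to vary with $\sigma$. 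The paper's resolution is that for $h$ and $\delta_2$ small enough, the elements of $\tilde{\cal P}_n^{(\sigma)}$ can be put in one-to-one correspondence with those of $\tilde{\cal P}_n^{(0)}$ for all $n \le R_1'(0)$, a finitely-many-iterates statement about the partitions themselves, distinct from the mixing-box perturbation behind Lemma 3.4. You gesture toward this via the proximity $d(x_1^{(\sigma)}, x_1^{(0)}) < \delta_2$, but you never verify that the long segment arises at the same time in the $\sigma$-construction. You also omit the small point that the two subsegments $\omega', \omega'' \subset \omega$ returning at $R_2$ and $R_2+1$ must be taken disjoint (the paper notes this is possible because $f$ has no fixed points), since otherwise $R$ would be multiply defined on their overlap.
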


\noindent {\bf Proof:} Again we first consider the case $\sigma =
0$. Here $R_2$ is chosen as follows: Without allowing any returns,
let $R_1'$ be the smallest time greater than or equal to $R_1$ such
that there exists $\omega \in \tilde{\cal P}_{R_1'}$ with
$p(f^{R_1'} \omega) > \eps_0>0$. With $\varepsilon_0$ chosen as
before, we take $n_1$ from Lemma 3.4 and set $R_2 = R_1' + n_1$.
Using Lemma 3.4, we find two subsegments $\omega'$ and $\omega''
\subset \omega$ such that $f^{R_2}\omega'$ and $f^{R_2+1}\omega''$
are both homogeneous segments that $u$-cross the middle half of
$Q(\Lambda^{(0)})$ with greater than $2\delta$ sticking out from
each side.  We may suppose that $\omega'$ and $\omega''$ are
disjoint since $f$ has no fixed points.  They give rise to two
$s$-subsets of $\Lambda^{(0)}$ with the properties in (ii).  From
time $R_2$ on, returns to $\Lambda^{(0)}$ are allowed as before.

When $\sigma \not= 0$, we follow the same procedure as above to
ensure the mixing of $(\Lambda^{(\sigma)}, R^{(\sigma)})$.  The only
concern is that $R_1'=R_1' (\sigma)$ (and hence also $R_2 = R_1' +
n_1$) might not be independent of $\sigma$.  This is not a problem
as the construction above involves only a finite number of steps:
With $h$ and $\delta_2$ sufficiently small, the elements of
$\tilde{\cal P}_n^{(\sigma)}$ can be defined in such a way that they
are in a one-to-one correspondence with those of $\tilde{\cal
P}_n^{(0)}$ for $n\leq R_1'(0)$. \hfill $\square$

\medskip

Finally, mixing of the surviving dynamics is ensured by choosing $h$
small enough that $\bar n(h)>R_2+1$. This ensures that the
$s$-subsets $\Lambda^s$ that make full returns at times $R_2$ and
$R_2+1$ cannot fall into the hole prior to returning.

\bigskip
The proof of Proposition 2.2 for holes of Type I is now complete.


\subsection{Modifications needed for holes of Type II}

The proof for Type II holes is very similar to that for Type I
holes. There are, however, some differences due to the more
complicated geometry of $\partial H_\sigma$. In the discussion
below, we assume the infinitesimal hole $\{q_0\}$ does not lie on
any segment in the table tangent to more than one scatterer. The
general situation is left to the reader.

From the discussion of the geometry of Type II holes in Sect.~3.1,
we see that the Important Geometric Facts ($\dagger$) in Sect.~2.3
continue to hold with $M_\sigma$ in the place of $M$, except that
$u$-curves need not be transversal to the $\partial H_\sigma \cup
H_0$ part of $\partial M_\sigma$. Potential problems that may arise
are discussed below. The discontinuity set of $f$, i.e. $f^{-1}
\partial M_\sigma$, has the same geometric properties as before.

We now go through the 9 points in Sect.~3.2. No modifications are
needed in items 1--3. As expected, item 4 is where the most
substantial modifications occur:

\medskip
\noindent {\bf Modifications in Item 4.} Lemma 3.3 is still true as
stated, but the geometry is different. In the discussion below related to
this lemma, the discontinuity set refers to $f^{-1}\partial M$, not the
enlarged discontinuity set $f^{-1}\partial M_\sigma$, and unstable
curves are defined accordingly. For Type I holes, the proof relies
on the fact that any (increasing) connected $u$-curve $\omega$ meets
$\partial H_\sigma \cup H_0$, which is the union of three vertical
lines, in at most three points.

\begin{lemma} Any unstable curve $\omega$ meets
$\partial H_\sigma \cup H_0$ in at most three points.
\end{lemma}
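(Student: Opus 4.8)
The statement concerns a hole of Type II, so $\partial H_\sigma \cup H_0$ is no longer a union of vertical lines as in the Type I case; rather, from the discussion in Sect.~3.1, $\partial H_\sigma$ (and the limiting object $H_0$) is a finite union of pairs of \emph{negatively sloped} curves whose slopes $d\varphi/dr$ are uniformly bounded away from $0$ and $-\infty$. The plan is to reduce the problem to a single curve-versus-curve crossing count: an unstable curve $\omega$ has tangent vectors in the unstable cone $C^u$, hence is \emph{increasing} (slope $d\varphi/dr$ bounded below by the curvature of the scatterer, as recalled in the proof of Lemma 3.3), whereas each component of $\partial H_\sigma \cup H_0$ is \emph{decreasing} with slope bounded away from $0$. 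Two monotone curves of strictly opposite monotonicity can cross at most once, so the whole count is bounded by the number of components of $\partial H_\sigma \cup H_0$.

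First I would recall precisely, from Sect.~3.1, the structure of $\partial H_\sigma \cup H_0$ for a Type~II hole whose infinitesimal hole $\{q_0\}$ lies on no multiple-tangency segment: $\partial H_\sigma$ consists of one pair of negatively sloped $C^1$ curves (coming from the two ``sides'' of $B_\sigma$ as seen in the relevant $\Gamma_i \times [-\pi/2,\pi/2]$ component), and $H_0$ is its infinitesimal analogue, so together they form at most a bounded number --- in fact, for a single convex hole placed generically, the count is exactly three pieces, matching the Type I statement --- of negatively sloped curves, each with slope in a fixed compact subinterval of $(-\infty, 0)$. Second, I would invoke the cone condition: any $u$-curve has all tangent directions in $C^u$, which in $(r,\varphi)$ coordinates means strictly positive slope, bounded below by a positive constant (indeed by the curvature bound on the $\Gamma_i$). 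Third, the elementary geometric observation: if $\gamma_1$ is a graph over $r$ with slope $\ge c_1 > 0$ and $\gamma_2$ is a graph over $r$ with slope $\le -c_2 < 0$, then the function measuring their vertical difference is strictly monotone, so they meet in at most one point; applying this to $\omega$ against each smooth piece of $\partial H_\sigma \cup H_0$ and summing gives the bound.

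The main subtlety --- the step I expect to require the most care --- is verifying that each component of $\partial H_\sigma \cup H_0$ really is a \emph{graph} over the $r$-coordinate with slope bounded away from $0$, \emph{uniformly in $\sigma$}, and that an unstable curve likewise projects injectively to the $r$-axis; only then does the ``two opposite-monotonicity graphs cross once'' argument apply cleanly. For $\partial H_\sigma$ this is exactly the content of the geometric description in Sect.~3.1 (negatively sloped, slopes bounded away from $-\infty$ and $0$), and for $u$-curves it follows from the invariant cone $C^u$ together with the derivative bounds in item (iii) of Sect.~2.3; but one should note that $\omega$ may live in a different component of $\Gamma_i\times[-\pi/2,\pi/2]$ than part of the hole-boundary, or may wrap partway around the cylinder. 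The wrap-around issue is handled exactly as in the parenthetical remark in the proof of Lemma~3.3: since the slope of $\omega$ exceeds the curvature of $\Gamma_i$, a \emph{connected} $u$-curve cannot wind around the cylinder $\Gamma_i\times[-\pi/2,\pi/2]$ enough to meet a given monotone boundary curve twice. Collecting the at-most-one intersections over the finitely many (uniformly bounded-length) smooth pieces of $\partial H_\sigma \cup H_0$ yields the bound of three, completing the proof.
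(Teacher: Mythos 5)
Your proposal has a genuine gap, and it traces to a misreading of the geometry of Type~II holes in Sect.~3.1. You assert that $\partial H_\sigma \cup H_0$ is a finite union of \emph{negatively sloped} curves (with slopes bounded away from $0$ and $-\infty$), and then argue by ``opposite monotonicity $\Rightarrow$ at most one crossing.'' But Sect.~3.1 attributes that negatively-sloped, bounded-slope structure to $f^{-1}(\partial H_\sigma)\setminus(\partial M\cup f^{-1}\partial M)$, i.e.\ to $\partial B_\sigma$, not to $\partial H_\sigma$ itself. Since $H_\sigma = f(B_\sigma)$, the boundary $\partial H_\sigma$ is (modulo pieces of $\partial M$ and $f(\partial M)$) the \emph{forward} image of those decreasing curves, and is therefore made of \emph{increasing} $u$-type curves. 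Your opposite-monotonicity argument simply does not apply: both $\omega$ and the ``primary'' pieces of $\partial H_\sigma$ are increasing.

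The correct mechanism, which the paper uses, is subtler. The paper decomposes $\partial H_\sigma$ into \emph{primary} segments (forward images of $\partial B_\sigma\setminus f^{-1}(\partial M)$) and \emph{secondary} segments (subsegments of $f(\partial M)$). Two separate observations then give the count. First, an unstable curve $\omega$ is confined to a single connected component of $M\setminus f(\partial M)$, because it cannot cross the discontinuity set of $f^{-1}$; hence $\omega$ never meets a secondary segment, and $H_0$ contributes at most one component inside that region. Second, although primary segments are increasing, their tangents lie \emph{strictly outside} the unstable cone: they are images under $Df$ of decreasing tangent directions, while the unstable cone is the image of $\{0 \le d\varphi/dr \le \infty\}$, so a primary segment is uniformly \emph{steeper} than any unstable curve and hence meets it at most once. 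Your proposal, by assuming the wrong sign of slope for $\partial H_\sigma$, sidesteps the essential cone-comparison step and also never addresses the secondary segments; both are needed. (Your remark about the wrap-around issue, imported from the proof of Lemma~3.3, is fine as far as it goes, but it does not repair the central misidentification.)
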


Even though Lemma 3.3 is stated for $u$-curves, we need it only for
unstable curves (and the argument here is slightly simpler for
unstable curves).

\begin{proof}
Let us distinguish between two different types of curves that
comprise $\partial H_\sigma$: {\it Primary} segments, which are the
forward images of curves in $\partial B_\sigma \setminus
f^{-1}(\partial M)$, and {\it secondary} segments, which are
subsegments of $f(\partial M)$.  For examples, see Fig.~4.  In
general, when $q_0$ does not lie on a line segment with multiple
tangencies to the scatterers, secondary segments are absent in
$H_0$, while each component of $H_0$ gives rise to two primary
segments in $\partial H_\sigma$ for $\sigma \ne 0$.

\begin{figure}[htp]
\centering
\hspace{.2 in}
\resizebox{!}{2.7 in}{
\includegraphics{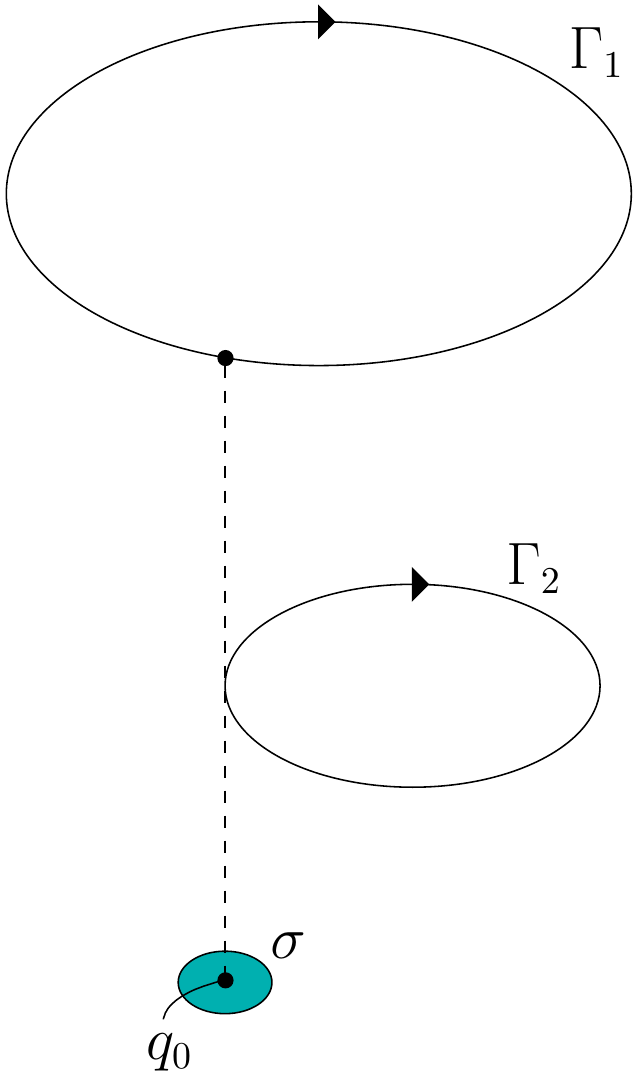}}
\hspace{.45 in}
\resizebox{!}{2.7 in}{
\includegraphics{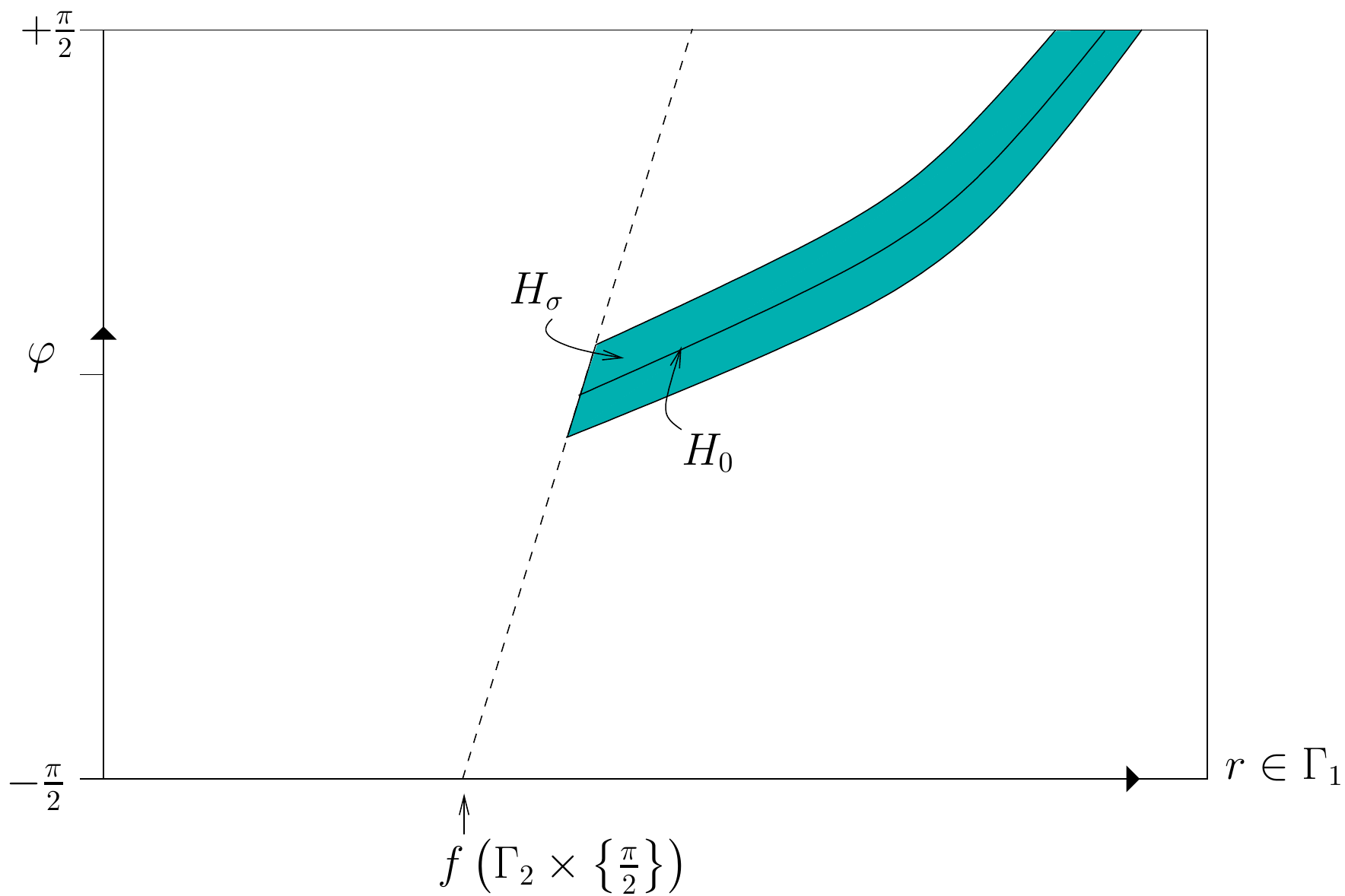}}

\bigskip

\resizebox{!}{2.7 in}{
\includegraphics{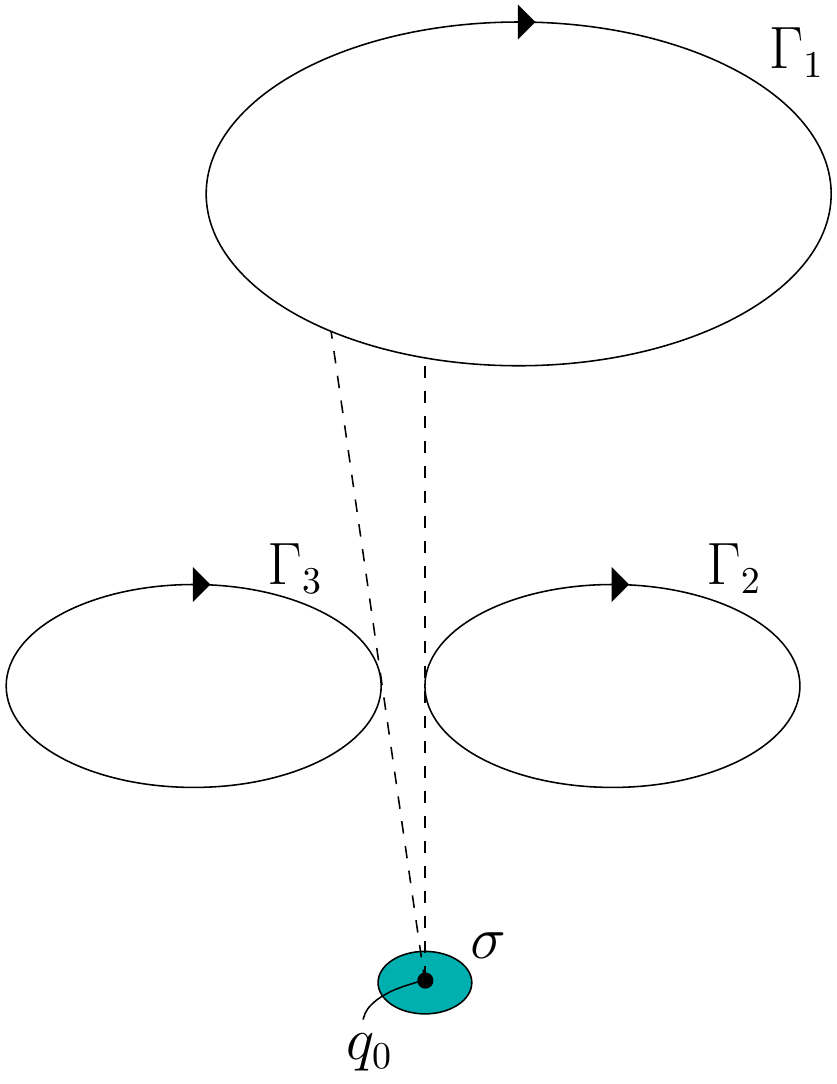}}
\hspace{.2 in}
\resizebox{!}{2.7 in}{
\includegraphics{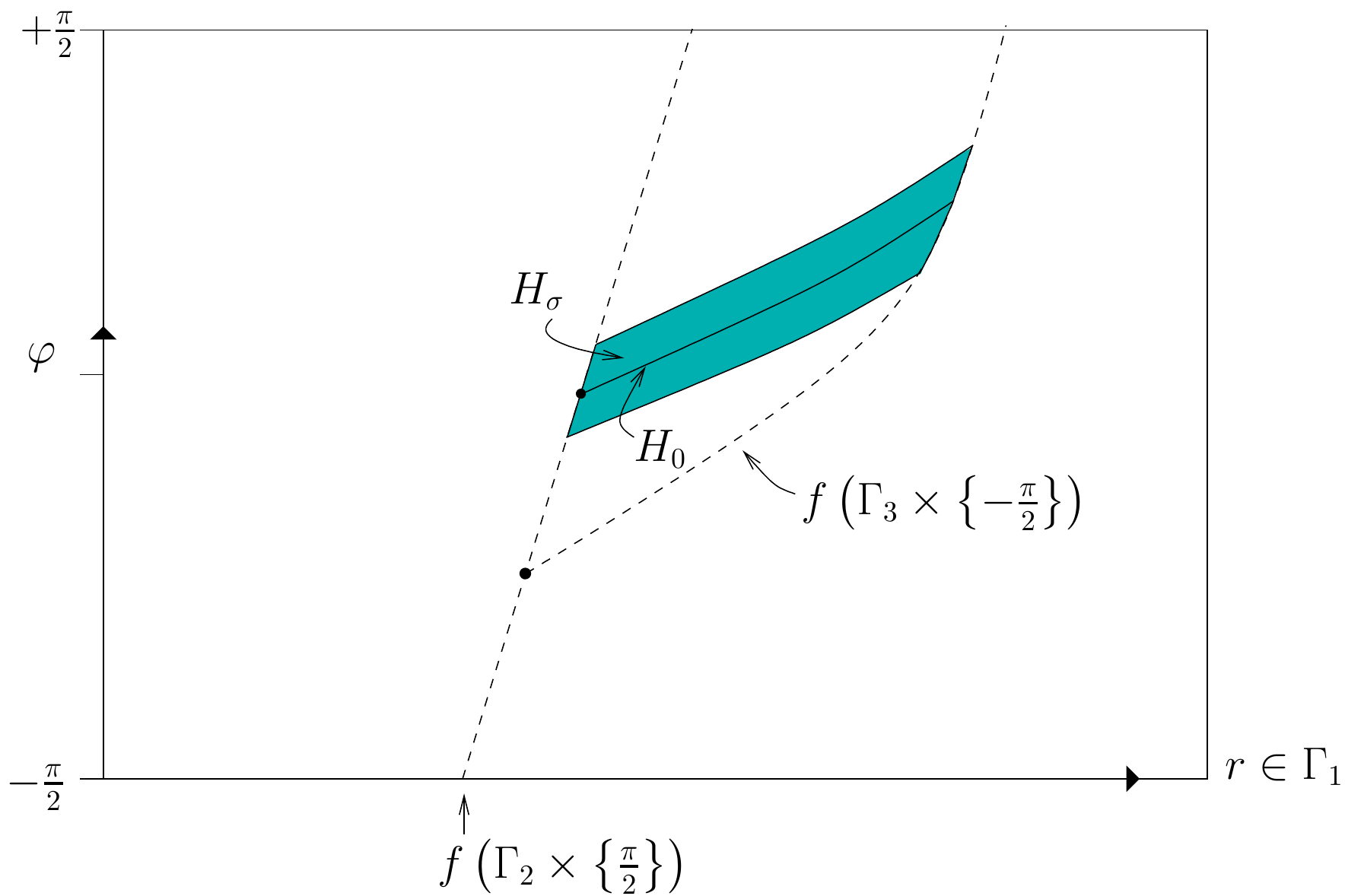}}\label{figure 4}

\caption{Representative examples of the geometry of Type II holes.
\emph{Top set:} On the left is a configuration on the billiard table
$X$, while on the right is the resulting configuration in the subset
$\Gamma_1 \times [-\frac{\pi}{2}, \frac{\pi}{2}]$ of phase space. In
this subset, $H_0$ consists of a single primary segment whose
endpoints lie on $f(\Gamma_2\times \{\frac{\pi}{2}\}) $ and
$\Gamma_1\times \{\frac{\pi}{2}\} $. For $\sigma\ne 0 $, $\partial
H_\sigma$ contains two primary segments and a single secondary
segment that lies on  $f(\Gamma_2\times \{\frac{\pi}{2}\}) $.
(Recall that by convention $\partial H_\sigma $ does not include
subsegments of $\partial M$.)
 \emph{Bottom set:} The analogous situation when the view of $\Gamma_1$ from
$q_0 $ is obstructed by two scatterers, instead of just one. Observe
that now $\partial H_\sigma$ contains two secondary segments in
$\Gamma_1 \times [-\frac{\pi}{2}, \frac{\pi}{2}]$. The situation
when the view of $\Gamma_1$ from $q_0 $ is unobstructed by other
scatterers is simple and is left to the reader.}

\end{figure}

To prove the lemma, observe first that
 $H_0$ can have no more than one component in any
connected component of $M \setminus f(\partial M)$. Second, $\omega$
must also be entirely contained inside one connected component of $M
\setminus f(\partial M)$. This is because unstable curves for $f$
cannot cross the discontinuity set of $f^{-1}$. As a consequence,
$\omega$ also cannot cross any secondary segment as secondary
segments of $\partial H_\sigma$ are contained in $f(\partial M)$.

It remains to show that $\omega$ can meet each primary segment in at
most one point. Although primary segments are increasing, their
tangent vectors lie outside of unstable cones (except at $\partial
M$ where the unstable cone is degenerate). This is because the
curves in $\partial B_\sigma \setminus f^{-1}\partial M$ are
decreasing, while the unstable cones are defined to be the forward
images of $\{0 \le \frac{d \varphi}{dr} \le \infty\}$ under $Df$.
Hence primary segments have greater slopes than $\omega$.
\end{proof}

As pointed out in Sect. 2.3, item 4, Lemma 3.3 must be modified to
account for the deletions that arise from intersections with forward
images of $\Omega_n$, and one might be concerned about the absence
of uniform estimates on transversality in ($\dagger$) between
$\partial H_\sigma$ and unstable curves. This, in fact, is not a
problem, because such deletions occur only in neighborhoods of
$f^{-1}\partial M_\sigma$, which are decreasing curves and hence
uniformly transversal to $u$-curves.

This completes the modifications associated with item 4.

\medskip
No modifications are required for items 5, 7, 8 and 9.

\medskip
\noindent {\bf Modifications in Item 6.} In the proof of (**)', the
argument needs to be modified, again due to the difference in
geometry: In order to prove that $Q^* \cap (\cup_0^q f^i(\partial
M_0)) = \emptyset$ (Sublemma B), in the case of Type I holes we use
that $\cup_1^q f^i(\partial M_0)$ is the union of finitely many
piecewise smooth increasing curves that stretch from $\{\varphi =
-\frac{\pi}{2}\}$ to $\{\varphi = \frac{\pi}{2}\}$. For Type II
holes, this is not true. However, it can be arranged that Sublemma B
will continue to hold as we now explain: First,
$$
\cup_1^q f^i(\partial M_0) \ \subset \left((\cup_1^qf^i(\partial M))
\cup (\cup_0^{q-1} f^i(H_0)) \right) \ \cup \ f^q(H_0) \ .
$$
If we write the right side as $A \cup f^q(H_0)$, then $A$ has the
desired geometry, i.e. it is the union of finitely many piecewise
smooth increasing curves that stretch from $\{\varphi =
-\frac{\pi}{2}\}$ to $\{\varphi = \frac{\pi}{2}\}$. Thus the same
argument as before shows that this set is disjoint from $Q^*$. One
way to ensure that $Q^* \cap f^q(H_0) = \emptyset$ is to choose the
mixing boxes $U_j$ disjoint from $H_0$, which can easily be arranged
given the geometry of primary segments discussed above.

\medskip
This completes the proof of Proposition 2.2 for Type II holes.


\section{Escape Dynamics on Markov Towers}
\label{towers}

In this section and the next, we lift the problems from the billiard systems
in question to their Markov tower extensions, and solve the problems there.
In Sect.~4, we review relevant works and formulate results on towers.
Proofs are given in Sect.~5.

\subsection{From generalized horseshoes to Markov towers (review)}

It is shown in [Y] that given a map $f: M \to M$ with a generalized horseshoe
$(\Lambda, R)$ as defined in Sect.~2.1, one can associate a Markov extension
$F: \Delta \to \Delta$ which focuses on the return dynamics to $\Lambda$
(and suppresses details between returns) .
We first recall some facts about this very general construction, taking
the opportunity to introduce some notation.

Let
\[
\Delta = \{ (x,n) \in \Lambda \times \N :  n < R(x) \} ,
\]
and define $F: \Delta \to \Delta$ as follows: For
$\ell <R(x)-1$, we let $F(x,\ell) = (x, \ell+1)$, and define
$F(x,R(x)-1) = (f^{R(x)}(x),0)$. Equivalently, one can view
$\Delta$ as the disjoint union
$\cup_{\ell \ge 0} \Delta_\ell$ where
$\Delta_\ell$, the $\ell^{\mbox{\tiny th}}$ level of the tower, is a copy of
$\{x \in \Lambda: R(x)> \ell\}$. This is the representation we will use.
There is a natural projection $\pi: \Delta \to M$ such that $\pi \circ F = f \circ \pi$.
In general, $\pi$ is not one-to-one, but for each $\ell \ge 0$,
it maps $\Delta_\ell$ bijectively onto $f^\ell(\Lambda \cap \{R \ge \ell\})$.

In the construction of $(\Lambda,R)$, one usually introduces an increasing
sequence of partitions of $\Lambda$ into $s$-subsets representing
distinguishable itineraries in the first $n$ steps.
(In Sects.~\ref{young horseshoe} and \ref{proof of horseshoe}, these partitions
were given by $\tilde \pa_\ell$ of $\tilde \Omega_\ell$.) These partitions
induce a partition $\{ \Delta_{\ell, j} \}$ of $\Delta$ which is finite on
each level $\ell$ and and is a (countable) Markov partition for $F$.
We define a separation time $s(x,y) \leq s_0(x,y)$
by $\inf \{ n>0: \mbox{$F^nx, F^ny$ lie in
different $\Delta_{\ell,j}$} \}$.

We borrow the following language from $(\Lambda, R)$ for use
on $\Delta$: For each $\ell,j$, recall that $\Gamma^s(\pi(\Delta_{\ell, j}))$ and
$\Gamma^u(\pi(\Delta_{\ell, j}))$ are the stable and unstable families
defining the hyperbolic product set $\pi(\Delta_{\ell, j})$. We will say
$\tilde \gamma \subset \Delta_{\ell, j}$ is an {\it unstable leaf} of
$\Delta_{\ell, j}$ if $\pi(\tilde \gamma) = \gamma \cap \pi(\Delta_{\ell, j})$
for some $\gamma \in \Gamma^u(\pi(\Delta_{\ell, j}))$, and use
$\Gamma^u(\Delta_{\ell, j})$ to denote the set of all such $\tilde \gamma$.
Let $\Gamma^u(\Delta) = \cup_{\ell,j}
\Gamma^u(\Delta_{\ell, j})$ be the set of all unstable leaves of $\Delta$.
{\it Stable leaves} of $\Delta_{\ell, j}$ and the families $\Gamma^s(\Delta_{\ell, j})$
and $\Gamma^s(\Delta)$ are defined similarly.

\medskip
Associated with $F:\Delta \to \Delta$, which we may think of as a ``hyperbolic
tower", is its quotient ``expanding tower" obtained by collapsing stable leaves
to points. Topologically, $\bDelta = \Delta/\! \!\sim$ where for
$x,y \in \Delta$, $x \sim y$ if and only if $y \in \gamma (x)$ for some
$\gamma \in \Gamma^s(\Delta)$. Let $\bpi
: \Delta \to \bDelta$ be the projection defined by $\sim$, and let
$\barF: \bDelta \to \bDelta$ be the induced map on $\bDelta$
satisfying $\barF \circ \bpi = \bpi \circ F$. We will use the notation
 $\bDelta_\ell =\bpi (\Delta_\ell), \bDelta_{\ell,j} = \bpi(\dlj)$, and so on.

It is shown in [Y] that there is a well defined differential structure on $\bDelta$
preserved by $\barF$. Recall that $\mu_\gamma$ is the Riemannian measure on
$\gamma$, and for $\gamma, \gamma'  \in \Gamma^u(\Lambda)$,
$\Theta_{\gamma, \gamma'}:
\gamma \cap \Lambda \to \gamma' \cap \Lambda$ is
the holonomy map obtained by sliding along stable curves, i.e.
$\Theta_{\gamma, \gamma'}(x) = \gamma^s(x) \cap \gamma'$.
We introduce the following notation: For $x \in \Lambda_i \cap \gamma$,
let $\gamma'$ be such that $f^{R_i}(\gamma) \subset \gamma'$. Then
 $J^u(f^R)(x) = J_{m_\gamma, m_{\gamma'}}(f^{R_i}|
(\gamma \cap \Lambda_i))(x)$ is the Jacobian of $f^R$ with respect to
the measures $m_\gamma$ and $m_{\gamma'}$.
Lemma 1 of [Y], which we recall
below, is key to the differential structure on $\bDelta$.

\begin{lemma}
\label{lemma:jacobian}
There is a function $u: \Lambda \to {\mathbb R}$ such that for each
$\gamma \in \Gamma^u(\Lambda)$, if $m_\gamma$ is the measure whose
density with respect to $\mu_\gamma$ is $e^u I_{\gamma \cap \Lambda}$,
then we have the following:
\begin{itemize}
  \item[(1)] For all $\gamma$, $\gamma' \in \Gamma^u(\Lambda)$,
  $(\Theta_{\gamma,\gamma'})_*m_\gamma = m_{\gamma'}$.
   \item[(2)] $J^u(f^R)(x) = J^u(f^R)(y)$ for all $y \in \gamma^s(x)$.
  \item[(3)] $\exists C_1>0$ (depending on $C$ and $\alpha$) such that
  for each $i$ and all $x,y \in \Lambda_i \cap \gamma$,
    \begin{equation}\label{eq:C_1}
        \left| \frac{J^u(f^R)(x)}{J^u(f^R)(y)} - 1\right|
    \leq C_1 \alpha^{s(f^Rx, f^Ry)/2} .
    \end{equation}
 \end{itemize}
 The properties of $u$ include $|u| \le C$ and $|u(x)-u(y)| \le
 4C \alpha^{\frac12 s(x,y)}$ on each $\gamma$.
\end{lemma}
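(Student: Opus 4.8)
The plan is to build the density $u$ on each unstable leaf as an infinite product of ratios of unstable Jacobians along forward orbits, in the style of the usual construction of SRB densities for hyperbolic systems, and then check that the three stated properties follow from the hyperbolicity and distortion bounds \textbf{(P3)}--\textbf{(P5)} already in hand. Concretely, fix a reference unstable leaf $\hat\gamma\in\Gamma^u(\Lambda)$ and, for $x\in\Lambda$, let $\hat x=\gamma^s(x)\cap\hat\gamma$ be its stable-holonomy image on $\hat\gamma$. Define
\[
u(x) \;=\; \sum_{i=0}^{\infty}\log\frac{\det Df^u(f^i\hat x)}{\det Df^u(f^ix)} .
\]
The series converges, and in fact satisfies $|u|\le C$, by \textbf{(P5)}(a) applied along the stable manifold through $x$ (since $\hat x\in\gamma^s(x)$, the partial sums from $n$ to $\infty$ are bounded by $C\alpha^n$; summing the geometric series and absorbing constants gives the bound $|u|\le C$, possibly after adjusting $C$). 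The H\"older estimate $|u(x)-u(y)|\le 4C\alpha^{\frac12 s(x,y)}$ on each $\gamma$ comes from splitting the defining series at index $n=\lfloor s(x,y)/2\rfloor$: for $i<n<s_0(x,y)$ the two summands for $x$ and $y$ nearly cancel term-by-term using \textbf{(P4)}(b), while for $i\ge n$ each tail is separately $\le C\alpha^{n}$ by \textbf{(P5)}(a); adding the four contributions yields the stated constant.

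With $u$ so defined, set $m_\gamma = e^u I_{\gamma\cap\Lambda}\,d\mu_\gamma$. Property (1), the holonomy-invariance $(\Theta_{\gamma,\gamma'})_* m_\gamma = m_{\gamma'}$, is essentially a restatement of \textbf{(P5)}(b): that condition computes $\frac{d(\Theta_*^{-1}\mu_{\gamma'})}{d\mu_\gamma}(x)$ as the infinite product $\prod_{i\ge0}\frac{\det Df^u(f^ix)}{\det Df^u(f^i\Theta x)}$, which is exactly $e^{u(x)-u(\Theta x)}$ given that $x$ and $\Theta x$ have the same stable-holonomy image $\hat x$ on $\hat\gamma$ (so the reference terms cancel in the difference $u(x)-u(\Theta x)$). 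Rearranging gives $e^{u(\Theta x)}\,\frac{d\mu_{\gamma'}}{d(\Theta_*\mu_\gamma)} = e^{u(x)}$, which is precisely the claimed change-of-variables identity for the measures $m_\gamma, m_{\gamma'}$. Property (2), that $J^u(f^R)$ is constant on stable leaves, then follows formally: $J^u(f^R)$ with respect to the $m$-measures is, by the chain rule and property (1), equal to $\frac{e^{u(f^Rx)}}{e^{u(x)}}\cdot\det Df^{u}(\text{along }f^R)$ — but a cleaner route is to note that $R$ is constant on stable leaves by construction (item 3 of Sect.~2.3), and that the $m$-Jacobian of $f^R$ restricted to $\gamma\cap\Lambda_i$ composed with the holonomy $\Theta$ equals the $m$-Jacobian along the image leaf, again by (1); since $f^R$ followed by $\Theta$ onto a fixed reference leaf is what we are differentiating, the value depends only on $\hat x$, hence only on the stable leaf.

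Property (3), the distortion bound \eqref{eq:C_1}, is the one that takes real work, and I expect it to be the main obstacle. Write $J^u(f^R)(x)=e^{u(f^Rx)-u(x)}\prod_{i=0}^{R_i-1}\det Df^u(f^ix)$ (schematically; the $u$-terms handle the passage between the Riemannian and the $m$-measures). Comparing this ratio for $x$ and $y$ in the same $\Lambda_i\cap\gamma$ splits into three pieces: (a) the difference $u(x)-u(y)$, controlled by the H\"older estimate above by $4C\alpha^{s(x,y)/2}$; (b) the difference $u(f^Rx)-u(f^Ry)$, controlled the same way by $4C\alpha^{s(f^Rx,f^Ry)/2}$; and (c) the ratio $\prod_{i=0}^{R_i-1}\frac{\det Df^u(f^ix)}{\det Df^u(f^iy)}$, which is bounded via \textbf{(P4)}(b) with $k=0$, $n=R_i-1$: since $x,y$ are not separated before their common return, $s_0(x,y)\ge R_i$, and \textbf{(P4)}(b) gives $\log$ of this product $\le C\alpha^{s_0(x,y)-(R_i-1)}$. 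The key point making this useful is that $s_0(x,y)-R_i$ relates to $s(f^Rx,f^Ry)$: after the return, $f^Rx$ and $f^Ry$ lie in $\Lambda$ and their separation time there is $s(f^Rx,f^Ry) = s_0(x,y)-R_i$ (or differs from it by a bounded amount, since one return has been subtracted and the partition $\{\Delta_{\ell,j}\}$ refines the $s_0$-itinerary appropriately). Substituting, piece (c) is $\le$ const $\cdot\,\alpha^{s(f^Rx,f^Ry)}$. Combining (a), (b), (c), using $\alpha^{s(f^Rx,f^Ry)}\le\alpha^{s(f^Rx,f^Ry)/2}$, and exponentiating ($|e^t-1|\le 2|t|$ for small $t$) gives $\left|\frac{J^u(f^R)(x)}{J^u(f^R)(y)}-1\right|\le C_1\alpha^{s(f^Rx,f^Ry)/2}$ with $C_1$ depending only on $C$ and $\alpha$. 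The delicate bookkeeping is exactly in reconciling $s_0$ with the tower separation time $s$ across a return — one must check that the definition of $s$ (via the Markov partition $\{\Delta_{\ell,j}\}$) makes $s(f^Rx,f^Ry)$ at most $s_0(x,y)-R_i$ up to an additive constant, which is where one cites the construction of $\tilde{\mathcal P}_n$ in Sect.~2.3.
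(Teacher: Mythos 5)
The paper does not prove this lemma; it is quoted verbatim as ``Lemma 1 of [Y]'', so your proof should be compared with Young's original argument, which it essentially reproduces: build $u$ as an infinite sum of log-derivative ratios relative to a reference leaf, read holonomy invariance off \textbf{(P5)}(b), and extract the distortion bound from \textbf{(P4)}(b) and the H\"older estimate on $u$. The overall architecture is right, but there is a concrete sign error that, as written, makes property (1) fail. With your definition
$u(x)=\sum_{i\ge0}\log\frac{\det Df^u(f^i\hat x)}{\det Df^u(f^i x)}$
one computes (using $\widehat{\Theta x}=\hat x$)
$u(x)-u(\Theta x)=\sum_i\log\frac{\det Df^u(f^i\Theta x)}{\det Df^u(f^i x)}$,
so $e^{u(x)-u(\Theta x)}$ equals the \emph{reciprocal} of the product $\prod_i\frac{\det Df^u(f^i x)}{\det Df^u(f^i\Theta x)}$ appearing in \textbf{(P5)}(b), not the product itself as you claim. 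The change-of-variables identity you derive, $\frac{d\mu_{\gamma'}}{d(\Theta_*\mu_\gamma)}(\Theta x)=e^{u(x)-u(\Theta x)}$, then requires $\prod_i\frac{\det Df^u(f^ix)}{\det Df^u(f^i\Theta x)}=e^{u(x)-u(\Theta x)}$, which your $u$ gives with the wrong sign. The fix is to take
$u(x)=\sum_{i\ge0}\log\frac{\det Df^u(f^i x)}{\det Df^u(f^i\hat x)}$
(i.e.\ flip numerator and denominator). All the bounds on $|u|$ and $|u(x)-u(y)|$ are symmetric in sign and survive unchanged, and with the corrected $u$ the holonomy-invariance computation closes.

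Two smaller points in part (3). First, your claim that $s(f^Rx,f^Ry)=s_0(x,y)-R_i$ ``or differs by a bounded amount'' is not quite right: what one actually has is $s_0(x,y)=R_i+s_0(f^Rx,f^Ry)$ and, by the definition of the tower partition, $s(x,y)=R_i+s(f^Rx,f^Ry)$ together with $s\le s_0$. The inequality $s(f^Rx,f^Ry)\le s_0(x,y)-R_i$ is all that is needed, and since $\alpha<1$ it goes the right way, so $\alpha^{s_0(x,y)-R_i}\le\alpha^{s(f^Rx,f^Ry)}$ and your estimate for piece (c) stands; it would be cleaner to say this directly rather than assert an approximate equality. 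Second, for pieces (a) and (b) you should note explicitly that $s(x,y)\ge s(f^Rx,f^Ry)$ (again from $s(x,y)=R_i+s(f^Rx,f^Ry)$), so that the bound $4C\alpha^{s(x,y)/2}$ on $|u(x)-u(y)|$ is indeed dominated by $4C\alpha^{s(f^Rx,f^Ry)/2}$. Finally, $|e^t-1|\le2|t|$ requires $|t|$ small; since the three pieces together are bounded by a uniform constant (roughly $9C$), one should instead use $|e^t-1|\le|t|e^{|t|}$ and absorb $e^{O(C)}$ into $C_1$. None of these affect the structure of the argument.
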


(1) and (2) together imply that there is a natural measure $\bm$ on
$\bDelta$ with respect to which  the Jacobian of $\barF$,
$J\barF$, is well defined: First, identify $\bDelta_0$ with $\gamma \cap
\Lambda$ for {\it any} $\gamma \in \Gamma^u(\Lambda)$, and let
$\bm|_{\Delta_0}$ be the measure that corresponds to $m_\gamma$.
(1) says that $\bm$ so defined is independent of $\gamma$, and (2) says that
with respect to $\bm$, $J\barF^R(x) = J^u(f^R)(y)$ for any
$y \in \gamma^s(x)$. We then extend $\bm$ to $\cup_{\ell >0}
\bDelta_\ell$ in such a way that $J\barF \equiv 1$ on all of $\bDelta \setminus
\barF^{-1}(\bDelta_0)$.

In the rest of Sect.~4.1 we will assume $\bm\{R>n\} < C_0 \theta_0^n$
for  some $C_0 \ge 1$ and $\theta_0<1$.\footnote{Our default rule is to use
the same symbol for corresponding objects for $f, F$ and $\barF$ when
no ambiguity can arise given context. Thus $R$ is the name of the return time
function on $\Lambda, \Delta_0$ and $\bDelta_0$.}

One of the reasons for passing from the hyperbolic tower to the expanding
tower is that the spectral properties of the transfer operator associated with
the latter can be leveraged. We fix $\beta$ with $1>\beta > \max \{ \theta_0, \sqrt{\alpha} \}$, and define a symbolic metric on $\bDelta$
 by $d_\beta(x,y) = \beta^{s(x,y)}$.
Since $\beta > \sqrt{\alpha}$, Lemma~\ref{lemma:jacobian}(3) implies that
$J\barF$ is log-Lipshitz with respect to this metric.
A natural function space on $\bDelta$
is ${\cal B} = \{ \rho \in L^1(\bDelta, \bm): \|\rho\| < \infty \}$
where $\|\rho\| = \|\rho\|_\infty + \|\rho\|_{\lip}$ and
\[
\|\rho\|_\infty = \sup_{\ell, j} \sup_{x \in \bDelta_{\ell,j}}
                      |\rho(x)| \beta^\ell ,  \qquad
\|\rho\|_{\lip} = \sup_{\ell,j} \mbox{Lip}(\rho |_{\bDelta_{\ell,j}}) \beta^\ell \ .
\]
Lip$(\cdot)$ above is with respect to the symbolic metric $d_\beta$.
The weights $\beta^\ell$ provide the needed contraction from
one level to the next, and $\beta > \theta_0$ is needed to maintain exponential
tail estimates.

\subsection{Towers with Markov holes}
\label{markov holes}

Now consider a leaky system $(f,M,H)$ as defined in Sect.~2.1, and
suppose $(\Lambda, R)$ is a generalized horseshoe respecting the hole $H$.
Let $F:\Delta \to \Delta$ be the associated tower map with $\pi: \Delta \to M$,
and let $\tilde H = \pi^{-1}(H)$. Then $(F, \Delta, \tilde H)$ is a leaky system in itself.
With the horseshoe respecting $H$, we have that $\tilde H$ is the union
of a collection of $\dlj$, usually an infinite number of them; we refer to holes of this type
as ``Markov holes". The notation $H_\ell := \tilde H \cap \Delta_\ell$ will be
used. Projecting and letting $\bH = \bpi(\tilde H)$, we obtain the quotient
leaky system $(\barF, \bDelta, \bH)$.
Let us say $(F, \Delta, \tilde H)$ and
$(\barF, \bDelta, \bH)$ are {\it mixing} if
the surviving dynamics of the horseshoe that gives rise to these towers are mixing;
see Sect.~2.1.

Letting $\hDelta = \Delta \setminus \tilde H$, we introduce the notation
$$\Delta^n = \cap_{i=0}^n F^{-1}\hDelta
= \{ x \in \Delta: F^ix \notin \tilde H \mbox{ for } 0 \le i \le n \}\ ,$$
so that in particular $\hDelta = \Delta^0$.
Corresponding objects for $(\barF, \bDelta, \bH)$ are denoted by $\bDelta^n$
.
\subsubsection{What is known: Spectral properties of expanding towers}

Expanding towers (that are not necessarily quotients of hyperbolic towers)
with Markov holes were studied in \cite{demers exp} and
\cite{demers bruin}. The following theorem summarizes several results proved
in \cite[Proposition 2.4, Corollary 2.5]{demers bruin}, under some conditions
on the tower that are easily satisfied
here. We refer the reader to \cite{demers bruin} for detail,
and state their results in our context of $(\barF, \bDelta, \bH)$.

Let $\B = \{\rho \in L^1(\bDelta^0, \bm): \|\rho\|< \infty\}$
where $\|\rho\|$ is
as above, and let $\bLp$ denote the transfer operator associated with
$\barF|_{\bDelta^1}$ defined on $\B$, i.e., for $\rho \in \B$
and $x \in \bDelta^0$,
\[
\bLp \rho(x) = \sum_{y \in \bDelta^0 \cap \barF^{-1}x} \rho (y) (J\barF(y))^{-1} .
\]

\begin{theorem}{\cite{demers bruin}}
\label{thm:exp conv}
Let $(\barF, \bDelta, \bH)$ be such that (i) $(\barF, \bDelta)$ has exponential
return times and (ii) $(\barF, \bDelta, \bH)$ is mixing. Assume the following
condition on hole size:
\begin{equation}
\sum_{\ell \geq 1} \beta^{-(\ell-1)}\bm(H_\ell)
     < \frac{(1-\beta)\bm(\bDelta_0)}{1+C_1}\ .
\label{holesize}
\end{equation}
Then the following hold:
\begin{enumerate}
\item[(1)] $\bLp$ is quasi-compact with a unique eigenvalue $\ra_*$ of maximum
modulus; $\ra_*$ is real and $>\beta$, and it has a unique eigenfunction
$h_* \in \B$ with $\int h_*d\bm=1$. In addition, there exist constants
$D>0$ and $\tau<1$ such that for all $\rho \in \B$,
\[
\| \ra_*^{-n}\bLp^n\rho - d(\rho)h_*\| \leq D\|\rho \| \tau^n, \; \; \;
\mbox{where }
d(\rho) = \lim_{n\to \infty} \lambda^{-n} \int_{\bDelta^n} \rho \, d\bm < \infty .
\]
\item[(2)] The eigenvalue $\ra_*$ satisfies
$\ra_* > 1 - \frac{1+C_1}{\bm(\bDelta_0)}
\sum_{\ell \geq 1} \beta^{-(\ell-1)}\bm(H_\ell)$.
\end{enumerate}
\end{theorem}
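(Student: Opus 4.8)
The plan is to run the standard quasi-compactness program for transfer operators on towers, with two twists special to this setting: the weights $\beta^\ell$ must do double duty (providing both the Lasota--Yorke contraction and the tail control that keeps the leading eigenvalue above $\beta$), and the hole must be shown small enough --- this is exactly \eqref{holesize} --- that a spectral gap survives. First I would establish a Lasota--Yorke inequality for $\bLp$ on $\B$: there are $n_0\in\N$, $\sigma_0<1$ and $C'>0$ with
\[
\|\bLp^{n_0}\rho\| \ \le\ \sigma_0\,\|\rho\| \ +\ C'\,|\rho|_{L^1(\bm)}, \qquad \rho\in\B .
\]
The $\|\cdot\|_\infty$- and $\|\cdot\|_{\lip}$-parts each pick up a factor $\beta$ per level climbed, and the log-Lipschitz bound on $J\barF$ from Lemma~\ref{lemma:jacobian}(3) (using $\beta>\sqrt\alpha$) controls distortion on returns; returns to the base are summable because $\beta>\theta_0$, and since the hole only deletes mass the estimate for $\bLp$ is no worse than that for the closed operator $\bLp_0$. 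Because $\int_{\bDelta_\ell}|\rho|\,d\bm\le C_0\|\rho\|_\infty(\theta_0/\beta)^\ell$ for $\rho\in\B$, the unit ball of $\B$ has uniformly small tails and is equi-Lipschitz on each (finite) level, hence is relatively compact in $L^1(\bm)$ by Arzel\`a--Ascoli and a diagonal argument. Hennion's theorem (in the Ionescu-Tulcea--Marinescu form) then gives: $\bLp$ is quasi-compact with essential spectral radius at most $\beta$.

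\textbf{Keeping the leading eigenvalue above the essential spectrum, and proving (2).} This is the step that uses \eqref{holesize}, and I expect it to be the main obstacle, because it requires an \emph{invariant-cone} estimate with the explicit constant $\tfrac{1+C_1}{\bm(\bDelta_0)}$. Using the distortion bound \eqref{eq:C_1} and the Markov structure of $\bDelta$, one shows that the cone of non-negative densities that are ``regular'' in the appropriate tower sense is (eventually) preserved by $\bLp$ and that every $\rho$ in it satisfies $\|\rho\|_\infty\le\tfrac{1+C_1}{\bm(\bDelta_0)}\int\rho\,d\bm$. Now iterate the mass balance: for such $\rho$,
\[
\int\bLp\rho\,d\bm \ =\ \int\rho\,d\bm\ -\ \int_{\bH}\bLp_0\rho\,d\bm \ \ge\ \Big(1-\tfrac{1+C_1}{\bm(\bDelta_0)}\sum_{\ell\ge1}\beta^{-(\ell-1)}\bm(H_\ell)\Big)\int\rho\,d\bm ,
\]
the weight $\beta^{-(\ell-1)}$ arising because $\bLp_0\rho$ restricted to $\bDelta_\ell$ comes from level $\ell-1$ with unit Jacobian and so is bounded by $\|\rho\|_\infty\beta^{-(\ell-1)}$. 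Iterating from $\rho=h$ (the closed-system density) gives $\int_{\bDelta^n}h\,d\bm\ge c\,\ell_*^{\,n}$ with $\ell_*:=1-\tfrac{1+C_1}{\bm(\bDelta_0)}\sum_{\ell\ge1}\beta^{-(\ell-1)}\bm(H_\ell)$, so the spectral radius of $\bLp$ is $\ge\ell_*$; by \eqref{holesize} this is strictly larger than $\beta\ge$ essential spectral radius. Hence $\bLp$ has a genuine spectral gap and its leading eigenvalue $\ra_*$ satisfies $\ra_*\ge\ell_*$, which is conclusion (2).

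\textbf{Simplicity, positivity, and the eigenfunction.} Quasi-compactness with $\ra_*$ exceeding the essential spectral radius gives finitely many eigenvalues of modulus $\ra_*$, each of finite multiplicity; $\bLp$ is positive, and the mixing hypothesis on $(\barF,\bDelta,\bH)$ --- some $s$-subset making a full \emph{surviving} return at every large time --- supplies the irreducibility and aperiodicity needed for a Perron--Frobenius (or cone-contraction) argument. The conclusion is that $\ra_*$ is real, positive, simple, and the unique eigenvalue of maximum modulus, with an eigenfunction $h_*\in\B$ realized as the cone limit of $\ra_*^{-n}\bLp^n h$; strict positivity of $h_*$ on $\bigcap_n\bDelta^n$ again follows from mixing, and we normalize $\int h_*\,d\bm=1$.

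\textbf{Exponential convergence and identification of $d(\rho)$.} Write $\bLp=\ra_*\Pi+N$ with $\Pi$ the rank-one spectral projection onto $\mathrm{span}(h_*)$ and $N$ the complementary operator; the spectral gap gives $\mathrm{spr}(N)<\ra_*$, so fixing $\tau<1$ with $\mathrm{spr}(N)<\tau\ra_*$ yields $\|\ra_*^{-n}\bLp^n-\Pi\|\le D\tau^n$, that is $\|\ra_*^{-n}\bLp^n\rho-d(\rho)h_*\|\le D\|\rho\|\tau^n$, where $d(\rho)$ is defined by $\Pi\rho=d(\rho)h_*$. Finally, integrating against $\bm$ and using the duality identity $\int\bLp^n\rho\,d\bm=\int_{\bDelta^n}\rho\,d\bm$ identifies $d(\rho)=\lim_{n\to\infty}\ra_*^{-n}\int_{\bDelta^n}\rho\,d\bm$, the existence and finiteness of this limit being part of the same convergence estimate. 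This completes the proposed proof.
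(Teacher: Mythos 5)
The paper does not prove this theorem: the text immediately preceding it states that it ``summarizes several results proved in [BDM, Proposition 2.4, Corollary 2.5]'' and explicitly refers the reader to that source, so there is no proof in this paper against which to compare your argument. On its own terms, your sketch is a reasonable reconstruction of the standard route for such results (Lasota--Yorke inequality plus Hennion/Ionescu-Tulcea--Marinescu gives quasi-compactness with essential spectral radius $\le \beta$; a mass-balance estimate using the hole-size condition keeps the spectral radius above $\beta$; the spectral decomposition then gives the exponential convergence and the formula for $d(\rho)$, using the duality $\int \bLp^n\rho\,d\bm = \int_{\bDelta^n}\rho\,d\bm$). Incidentally, note the $\lambda^{-n}$ in the paper's displayed formula for $d(\rho)$ appears to be a typographical slip for $\ra_*^{-n}$, which you use correctly.

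The one step you should flag as genuinely delicate is the cone inequality $\|\rho\|_\infty \le \frac{1+C_1}{\bm(\bDelta_0)}\int\rho\,d\bm$. This is false for general nonnegative $\rho\in\B$: take $\rho$ the indicator of a single element $\bDelta_{\ell,j}$ at a high level $\ell$; then $\|\rho\|_\infty = \beta^\ell$ while $\int\rho\,d\bm = \bm(\bDelta_{\ell,j}) \lesssim \theta_0^\ell$, and since $\beta > \theta_0$ the ratio $\|\rho\|_\infty / \int\rho\,d\bm$ is unbounded. The inequality is plausible only for a carefully chosen cone --- morally, for densities that are ``generated from the base,'' meaning the restriction to $\bDelta_\ell$ is obtained from $\bDelta_{\ell-1}$ by unit Jacobian so that $\sup_{\bDelta_\ell}\rho \cdot \beta^\ell \le \sup_{\bDelta_0}\rho$, together with the distortion bound \eqref{eq:C_1} which bounds $\sup_{\bDelta_0}\rho$ by $(1+C_1)$ times the $\bDelta_0$-average. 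Densities of the form $\ra_*^{-n}\bLp^n h$ (with $h$ the closed-system density) do lie in such a cone, but you would need to state the cone precisely, prove it is $\bLp$-invariant, and verify the constant; that is where the real work in [BDM] presumably lies.
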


The spectral property of $\bLp$ as described in Theorem \ref{thm:exp conv}(1)
 implies
that all $\rho$ except for those in a codimension 1 subspace have
$d(\rho) \ne 0$. Given the pivotal role played by the base $\bDelta_0$ of the
tower $\bDelta$,
one would guess that for a density $\rho$, if $\rho>0$ on $\bDelta_0$, then
$d(\rho) \ne 0$. A slightly more general condition is given
in Corollary \ref{cor:positive} below.
 We call $\bDelta_{\ell,j}$ a \emph{surviving element} of the tower if some part of $\bDelta_{\ell,j}$
returns to $\bDelta_0$ before entering $\overline H$.

\begin{corollary} \cite{demers bruin}
\label{cor:positive}
Let $\rho \in \B$ be a nonnegative function that is $>0$ on a surviving
$\bDelta_{\ell,j}$. Then $d(\rho) >0$.
\end{corollary}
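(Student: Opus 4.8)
The plan is to reduce the statement, via the positivity and $\bLp$-equivariance of the functional $d$, to the single claim $d(I_{\bDelta_0})>0$, and then to establish the latter by combining the spectral convergence in Theorem~\ref{thm:exp conv}(1) with a renewal-type lower bound on the measure of the $s$-subsets that make a full surviving return to the base $\bDelta_0$.

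For the reduction, first note that $d$ is a positive linear functional on $\B$ (since $d(\rho)=\lim_n\ra_*^{-n}\int_{\bDelta^n}\rho\,d\bm\ge 0$ for $\rho\ge 0$) and that $d\circ\bLp=\ra_*\,d$, because $\bLp^{n}(\bLp\psi)=\bLp^{n+1}\psi$ and so the coefficient of $h_*$ in $\lim_n\ra_*^{-n}\bLp^{n}(\bLp\psi)$ is $\ra_*\,d(\psi)$. Now let $\rho\ge 0$ be $>0$ on a surviving element $\bDelta_{\ell,j}$; since $\rho$ is Lipschitz and strictly positive on $\bDelta_{\ell,j}$ (which is compact in the $d_\beta$-metric), we have $\rho\ge c\,I_{\bDelta_{\ell,j}}$ for some $c>0$, with $I_{\bDelta_{\ell,j}}\in\B$. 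By definition of a surviving element there are $m\ge 0$ and a set $E\subseteq\bDelta_{\ell,j}$ with $\bm(E)>0$, $\barF^{i}E\cap\bH=\emptyset$ for $0\le i\le m$, and $\barF^{m}E=\bDelta_0$ (a return of the Markov partition carries a $u$-subset onto the entire base). On each return cylinder of $E$ the map $\barF^m$ is injective with $J\barF^m$ bounded, so for every $x\in\bDelta_0$ there is a surviving preimage $y_0\in E$ with $\bLp^{m}I_{\bDelta_{\ell,j}}(x)\ge(J\barF^m y_0)^{-1}\ge c'$ for some $c'>0$; that is, $\bLp^{m}I_{\bDelta_{\ell,j}}\ge c'\,I_{\bDelta_0}$. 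Hence $c\,c'\,d(I_{\bDelta_0})\le d\bigl(\bLp^{m}(c\,I_{\bDelta_{\ell,j}})\bigr)=\ra_*^{m}c\,d(I_{\bDelta_{\ell,j}})\le\ra_*^{m}\,d(\rho)$, and it suffices to prove $d(I_{\bDelta_0})>0$.

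To bound $d(I_{\bDelta_0})$ from below, fix $x\in\bDelta_0$ and write $\bLp^{n}I_{\bDelta_0}(x)$ as a sum over the return-time cylinders $\omega\subset\bDelta_0$ whose itineraries sum to exactly $n$ and whose orbits avoid $\bH$ — these are precisely the $s$-subsets making a full surviving return to $\bDelta_0$ at time $n$. For each such $\omega$, $\barF^{n}|_{\omega}$ is a bijection onto $\bDelta_0$, and the uniform distortion estimate of Lemma~\ref{lemma:jacobian}(3) gives $(J\barF^{n})^{-1}\ge(\mathrm{const})\,\bm(\omega)/\bm(\bDelta_0)$ on $\omega$; summing over $\omega$, $\bLp^{n}I_{\bDelta_0}(x)\ge(\mathrm{const})\,\bm(R_n)/\bm(\bDelta_0)$, where $R_n\subseteq\bDelta_0$ is the union of all such cylinders. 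Since $\ra_*^{-n}\bLp^{n}I_{\bDelta_0}\to d(I_{\bDelta_0})\,h_*$ in $\|\cdot\|$ by Theorem~\ref{thm:exp conv}(1) — hence uniformly on $\bDelta_0$, where the weight is $\beta^{0}=1$ — and $h_*$ is bounded on $\bDelta_0$ (being in $\B$), we obtain
\[
d(I_{\bDelta_0})\ \sup_{\bDelta_0}h_*\ \ge\ \frac{\mathrm{const}}{\bm(\bDelta_0)}\ \limsup_{n\to\infty}\ \ra_*^{-n}\,\bm(R_n).
\]
Thus $d(I_{\bDelta_0})>0$ will follow once we know $\limsup_n\ra_*^{-n}\bm(R_n)>0$.

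I expect this last inequality to be the main obstacle, and it is the point at which the hypothesis that $(\barF,\bDelta,\bH)$ is \emph{mixing} is genuinely used, rather than merely the existence of a conditionally invariant measure. It is a renewal estimate: with $g(z)=\sum_n\bm(\text{first full surviving return at time }n)\,z^{n}$, the series $\sum_n\bm(R_n)z^{n}$ is the renewal series generated by $g$, whose radius of convergence is $\ra_*^{-1}$; mixing — aperiodicity together with the existence of a full surviving return at every sufficiently large time — forces the singularity at $z=\ra_*^{-1}$ to be genuine, so $\bm(R_n)\asymp\ra_*^{n}$. This is exactly what is contained in \cite[Proposition~2.4, Corollary~2.5]{demers bruin}, which I would invoke directly; alternatively one can reprove it from the exponential return-time tail $\bm\{R>n\}\le C_0\theta_0^{n}$ with $\theta_0<\beta<\ra_*$ together with the explicit full-return $s$-subsets supplied by the mixing construction. (An equivalent route for the reduction step is to first show $h_*>0$ on every surviving element — by propagating positivity forward along surviving orbits using $\ra_*\,h_*(\barF z)\ge h_*(z)\,(J\barF z)^{-1}$ and the mixing hypothesis — and then to compare $\rho$ with a multiple of $h_*\,I_{\bDelta_{\ell,j}}$; the renewal estimate remains the essential input either way.)
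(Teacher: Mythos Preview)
The paper does not supply its own proof of this corollary; it is simply quoted from \cite{demers bruin}. So there is nothing in the paper to compare your argument against, and your proposal already goes further than the text does.

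Your reduction is sound. The positivity of $d$, the identity $d\circ\bLp=\ra_*\,d$, the lower bound $\rho\ge c\,I_{\bDelta_{\ell,j}}$ (Lipschitz on a compact Cantor set), and the pushforward estimate $\bLp^{m}I_{\bDelta_{\ell,j}}\ge c'\,I_{\bDelta_0}$ for a surviving element are all correct in this setting, and they cleanly reduce the problem to $d(I_{\bDelta_0})>0$. The reverse inequality $d(I_{\bDelta_{\ell,j}})\le C_{\ell,j}\,d(I_{\bDelta_0})$ also holds (each element sits in a single column, and $\bLp^{m}I_{\bDelta_{\ell,j}}$ is supported on $\bDelta_0$ and bounded above), so the reduction is in fact an equivalence.

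The difficulty is that your final step defers to \cite{demers bruin} for the renewal estimate $\limsup_n\ra_*^{-n}\bm(R_n)>0$, and that is precisely the paper from which the corollary itself is taken. As written, the argument is therefore not self-contained at its key point. Note also that the equality $\lim_n\ra_*^{-n}\bm(R_n)=d(I_{\bDelta_0})\int_{\bDelta_0}h_*\,d\bm$ follows directly from Theorem~\ref{thm:exp conv}(1) by integrating over $\bDelta_0$, so proving the renewal estimate really is equivalent to proving $d(I_{\bDelta_0})>0$ (given $\int_{\bDelta_0}h_*>0$); you have correctly located the content but not supplied it. Your parenthetical alternative via $h_*>0$ on surviving elements runs into the same issue: one still needs an input equivalent to the renewal estimate to close the loop, because $d$ is not obviously countably additive over the infinite decomposition $h_*=\sum_{\ell,j}h_*|_{\bDelta_{\ell,j}}$ in the $\|\cdot\|$-topology.
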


\subsubsection{What is desired: Results for hyperbolic towers}

Here we formulate a set of results for the hyperbolic tower
that connect the results in Sect.~4.2.1 to the stated theorems for billiards.
Let $\tG$ be the class of measures $\eta$ on $\Delta$ with
the following properties:
(i) $\eta$ has absolutely continuous conditional measures
on unstable leaves; and (ii) $\bpi_* \eta = \brho d\bm$ for some
$\brho \in \B$ with $d(\brho) >0$.

Let $(\Lambda^{(\sigma)}, R^{(\sigma)})$ be a generalized horseshoe with
the properties in Proposition 2.2, and let
$(F,\Delta)$ be its associated tower.
 Let $n(\Delta, \tilde H) := \sup\{\ell: H_\ell = \emptyset\}$,
i.e., $n(\Delta, \tilde H) = n(\Lambda^{(\sigma)}, R^{(\sigma)}; H_\sigma)$
as defined in Sect.~2.2.

\begin{theorem}
\label{thm:tower}
Assume that $n(\Delta, \tilde H)$
is large enough that
\begin{equation}
\sum_{\ell \geq n(\Delta, \tilde H)} \beta^{-(\ell-1)}\bm(\bDelta_\ell)
     < \frac{(1-\beta)\bm(\bDelta_0)}{1+C_1}\ .
\label{holelevel}
\end{equation}
Then the following hold:
\begin{itemize}
\item[(a)] There exists $\ra_*<1$ such that for all $\eta \in \tG$,
\[
\log \ra_* = \lim_{n\to \infty} \frac{1}{n} \log \eta (\Delta^n)
\ .
\]
\item[(b)] There exists a conditionally
invariant distribution $\tmu_* \in \tG$ with escape rate $- \log \ra_*$
for which the following hold:
For all $\eta \in \tG$, if $\brho$ is the density of $\bpi_*\eta$ and
$d(\brho)$ is as in Theorem \ref{thm:exp conv}, then
\[
\lim_{n \to \infty} \frac{\F_*^n\eta}{\F_*^n\eta(\hDelta)} = \tmu_*
\qquad \mbox{and} \qquad
\lim_{n \to \infty} \ra_*^{-n} \F_*^n\eta = d(\brho) \cdot \tmu_*
\]
where the convergence  is in the weak* topology.
\item[(c)] $\tmu_*$ has absolutely continuous conditional measures
on unstable leaves.
\end{itemize}
\end{theorem}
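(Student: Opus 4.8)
\medskip\noindent{\bf Proof strategy.}
The plan is to pull the spectral information of Theorem~\ref{thm:exp conv} from the expanding quotient $(\barF,\bDelta,\bH)$ up to the hyperbolic tower $(F,\Delta,\tilde H)$ via the semiconjugacy $\bpi$. First I would note that \eqref{holelevel} implies \eqref{holesize}: since $H_\ell=\emptyset$ for $\ell<n(\Delta,\tilde H)$ and $\bm(H_\ell)\le\bm(\bDelta_\ell)$, the left side of \eqref{holesize} is dominated by that of \eqref{holelevel}. So Theorem~\ref{thm:exp conv} applies, giving $\ra_*>\beta$, the eigenfunction $h_*\in\B$ with $\int h_*\,d\bm=1$, and $\|\ra_*^{-n}\bLp^n\brho-d(\brho)h_*\|\le D\|\brho\|\tau^n$; Corollary~\ref{cor:positive} gives $d(\brho)>0$ for $\brho\ge0$ positive on a surviving element, and $d(h_*)=1$ since $\ra_*^{-n}\bLp^n h_*\equiv h_*$. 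We have $\ra_*<1$ because escape is genuine: $\bH$ has positive $\bm$-measure and, by mixing, $\bm$-a.e.\ orbit eventually enters it, so $\int_{\bDelta^n}h_*\,d\bm=\ra_*^n\to0$. The key structural fact to record is that $\tilde H$, being a union of $\dlj$'s, is saturated by stable leaves; hence $\Delta^n=\bpi^{-1}(\bDelta^n)$ for all $n$, and the intertwining $\bpi\circ\F=(\barF|_{\bDelta^1})\circ\bpi$ holds on $\Delta^1$.

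From the intertwining and stable-saturation one obtains, by induction, $\bpi_*\F_*^n\eta=\bLp^n\brho\,d\bm$ as measures on $\bDelta^0$ whenever $\bpi_*\eta=\brho\,d\bm$. In particular $\F_*^n\eta(\hDelta)=\eta(\Delta^n)=\int_{\bDelta^n}\brho\,d\bm=\int_{\bDelta^0}\bLp^n\brho\,d\bm$, which by Theorem~\ref{thm:exp conv} satisfies $\ra_*^{-n}\eta(\Delta^n)\to d(\brho)\int h_*\,d\bm=d(\brho)>0$, the positivity holding precisely because $\eta\in\tG$. Part (a) is then immediate: $\tfrac1n\log\eta(\Delta^n)\to\log\ra_*$ with $\ra_*<1$.

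For (b) and (c) I would build $\tmu_*$ as the limit of $\ra_*^{-n}\F_*^n\eta_0$ for one convenient $\eta_0\in\tG$ --- say one whose unstable conditionals on $\Delta_0$ are the $m_\gamma$ of Lemma~\ref{lemma:jacobian} with $\bpi_*\eta_0=h_*\,d\bm$ --- and then show the limit is a fixed $d(\brho)$-multiple for every $\eta\in\tG$. Disintegrate $\ra_*^{-n}\F_*^n\eta$ over the partition $\{\dlj\}$: on each $\bDelta_{\ell,j}$ its transverse (quotient) component is $\ra_*^{-n}\bLp^n\brho|_{\bDelta_{\ell,j}}\,d\bm\to d(\brho)\,h_*|_{\bDelta_{\ell,j}}\,d\bm$ in $\B$, hence uniformly with the level weights $\beta^\ell$; its conditional measures on unstable leaves $\gamma\in\Gamma^u(\dlj)$ are pushforwards of pieces of $\eta$ under iterates of $f$, and the bounded-distortion estimates (P4)(b), (P5) and Lemma~\ref{lemma:jacobian} show these densities with respect to $\mu_\gamma$ are uniformly log-Lipschitz in the symbolic metric and converge, as $n\to\infty$, to a limit density depending only on $\gamma$ --- the forward-invariant (``SRB-type'') choice, \emph{independent of $\eta$}. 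Combining the two limits yields $\ra_*^{-n}\F_*^n\eta\to d(\brho)\,\tmu_*$, with $\tmu_*$ the measure having transverse component $h_*\,d\bm$ and those limit unstable conditionals; taking $\eta=\eta_0$ identifies $\tmu_*$. Tightness on the noncompact $\Delta$ --- needed for genuine weak* convergence --- follows from $\|\ra_*^{-n}\bLp^n\brho\|$ being uniformly bounded together with $\bm\{R>\ell\}\le C_0\theta_0^\ell$ and $\beta>\theta_0$: the mass of $\ra_*^{-n}\F_*^n\eta$ above level $N$ is at most $\mathrm{const}\sum_{\ell>N}(\theta_0/\beta)^\ell$, uniformly in $n$. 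Conditional invariance $\F_*\tmu_*=\ra_*\tmu_*$ follows since the limit is shift-invariant (equivalently, $\bpi_*\F_*\tmu_*=\bLp(h_*\,d\bm)=\ra_*h_*\,d\bm$, while $\F_*\tmu_*$ has abs.\ cont.\ unstable conditionals equal to $\ra_*$ times those of $\tmu_*$). The normalized statement is then $\F_*^n\eta/\F_*^n\eta(\hDelta)=(\ra_*^{-n}\F_*^n\eta)/(\ra_*^{-n}\eta(\Delta^n))\to d(\brho)\tmu_*/d(\brho)=\tmu_*$; part (c) holds by construction; and $\tmu_*\in\tG$ since $\bpi_*\tmu_*=h_*\,d\bm$ with $h_*>0$ on the surviving element $\bDelta_0$, so $d(h_*)>0$ by Corollary~\ref{cor:positive}.

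The step I expect to be the main obstacle is the leaf-by-leaf convergence of the unstable conditional measures of $\ra_*^{-n}\F_*^n\eta$ \emph{uniformly over the infinitely many $\dlj$}, and splicing it with the transverse convergence so as to get weak* convergence of the full measures on $\Delta$ rather than merely of their $\bpi$-images. The quotient spectral gap does not see the stable direction at all, so this regularity must be extracted separately from property (i) in the definition of $\tG$ together with uniform unstable expansion and the $C^{1+\eps}$-type distortion bounds of Lemma~\ref{lemma:jacobian}, all kept summable against the $\beta^\ell$ weights and the exponential return-time tail.
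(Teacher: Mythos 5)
Your treatment of part (a) is essentially identical to the paper's: project to the expanding quotient, invoke Theorem~\ref{thm:exp conv}, and use $\eta(\Delta^n)=\bpi_*\eta(\bDelta^n)$.

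For parts (b) and (c) you propose a genuinely different route, and it has a gap. You disintegrate $\ra_*^{-n}\F_*^n\eta$ over $\{\dlj\}$ into two pieces --- the quotient marginal $\ra_*^{-n}\bLp^n\brho\,d\bm$ and the unstable-leaf conditional densities --- and claim that convergence of both pieces forces weak* convergence of the full measure. But a measure on $\dlj$ is not determined by its $\bpi$-marginal together with its unstable conditionals; one also needs the \emph{factor measure} on the transversal $\Gamma^u(\dlj)$ (the measure that weights the unstable leaves). The marginal $\brho$ is only the $\eta^s$-average of the leaf densities, so knowing $\brho$ and every $\rho_\gamma$ still leaves the factor measure underdetermined. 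Your sketch addresses the marginal (spectral gap) and the leaf densities (distortion), but says nothing about the factor measure, and the tools you cite at the end --- unstable expansion, (P4)(b), Lemma~\ref{lemma:jacobian}, $\beta^\ell$-summability --- all live in the unstable direction and cannot control it. In fact the factor measure of $\ra_*^{-n}\F_*^n\eta$ on $\Gamma^u(\dlj)$ is essentially a pushforward of the original factor measure of $\eta$ along $F^{-n}$-preimage cylinders, and there is no reason it should converge to something $\eta$-independent without a new input.

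The paper circumvents exactly this difficulty by exploiting the \emph{stable} direction rather than trying to control it. The pivotal observation (Lemma~\ref{lemma:close convergence}) is that for a test function $\vf\in\mathrm{Lip}^s(\hDelta)$, the pullback $\vf\circ F^n$ oscillates by at most $|\vf|^s_{\lip}\lambda^{-n}$ along stable leaves. Hence if $\eta_1,\eta_2\in\tG$ have the same projected density $\brho$, then
\[
\ra_*^{-n}\bigl|F^n_*\eta_1(\vf)-F^n_*\eta_2(\vf)\bigr|
\le 2\,\ra_*^{-n}\,|\bLp^n\brho|_1\,|\vf|^s_{\lip}\,\lambda^{-n}\longrightarrow 0,
\]
so the particular factor measure never has to converge --- its effect on the test integral is washed out by stable contraction. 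The paper then proves convergence (Lemma~\ref{lemma:cauchy}) by a Cauchy argument: compare $\ra_*^{-(n+k)}F_*^{n+k}\eta$ with $\ra_*^{-(n+k)}F_*^n\,\bpi_*^{-1}\bpi_*F_*^k\eta$ using Lemma~\ref{lemma:close convergence} (same projection), and compare the two lifted measures using the $\B$-norm convergence $\ra_*^{-k}\bLp^k\brho\to d(\brho)h_*$. This is the missing idea in your proposal: you need a mechanism that renders the stable factor measure asymptotically irrelevant, and that mechanism is stable contraction acting on the test function, not any unstable-side estimate. Once you have it, your construction of $\tmu_*$ via a convenient $\eta_0$ and the claims about $\tmu_*\in\tG$, $d(h_*)=1$, and conditional invariance are all fine.
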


\begin{remark}
{\rm In Sect. 5.1 we show that $\bpi_* \tmu_* = h_* \bm$.  Thus $\tmu_* \in \tG$
and the $\vartheta_*$ of Theorem~\ref{thm:exp conv} is the same as the
$\vartheta_*$ of Theorem~\ref{thm:tower}.}
\end{remark}

Theorem \ref{thm:tower} treats one hole at a time. The
following uniform bounds are also needed, mostly for purposes of
proving Theorem 4.

\begin{proposition}
\label{prop:uniform bounds}
Consider all $(F,\Delta, \tilde H)$ arising from
any $(\Lambda^{(\sigma)}, R^{(\sigma)})$ in Proposition 2.2 for which
the hole condition in (\ref{holelevel}) is met.
Let $\tmu_*$ and $\ra_*$ be as in Theorem~\ref{thm:tower}. Then there are
constants $C_2, K >0$ such that  \vspace{-.1 in}
\begin{enumerate}
\item[(i)] the conditional densities $\rho_\gamma$
of $\tmu_*|_{\Delta_\ell}$ with respect to $\mu_\gamma$
on unstable leaves
satisfy $C_2^{-1} \ra_*^{-\ell} \leq \rho_\gamma \leq C_2 \ra_*^{-\ell}$;
\vspace{-.1 in}
\item[(ii)]  $\tmu_*(\cup_{\ell> L} \Delta_\ell)\leq  K\beta^{-L}\theta_0^L$; and
\vspace{-.1 in}
\item[(iii)] $\ra_* \to 1$ as $n(\Delta, \tilde H) \to \infty$.
\end{enumerate}
\end{proposition}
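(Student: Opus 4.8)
Everything here will be read off from the identity $\bpi_*\tmu_* = h_*\bm$ recorded in the Remark after Theorem~\ref{thm:tower}, together with uniform control of the eigenfunction $h_*$ and the uniform tail estimates of Proposition~\ref{prop:horseshoe}(b). The latter give, for all towers in question and with constants independent of $\sigma$: $\bm(\bDelta_\ell) = m_\gamma\{R>\ell\} \le e^{C}C_0\theta_0^\ell$ and $e^{-C}\kappa \le \bm(\bDelta_0) \le {\rm const}$, since $\bm|_{\bDelta_0}$ is a copy of $m_\gamma$, $dm_\gamma/d\mu_\gamma = e^{u}$ with $|u|\le C$ (Lemma~\ref{lemma:jacobian}), $\mu^u(\Lambda^{(\sigma)})\ge\kappa$, and $\mu_\gamma\{R^{(\sigma)}>\ell\} < C_0\theta_0^\ell$. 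First note that the hole condition (\ref{holelevel}) implies (\ref{holesize}), because $H_\ell = \emptyset$ for $\ell < n(\Delta,\tilde H)$ and $\bm(H_\ell)\le\bm(\bDelta_\ell)$; hence Theorem~\ref{thm:exp conv} applies. Moreover the Lasota--Yorke estimates underlying that theorem hold with constants depending only on $C_1, C_0, \theta_0, \beta$, hence uniformly; applying one to $h_* = \ra_*^{-n}\bLp^n h_*$, letting $n\to\infty$, and using $\ra_*>\beta$ and $\int h_*\,d\bm = 1$ gives a uniform bound $\|h_*\| \le {\rm const}$. In particular $\sup_{\bDelta_\ell} h_* \le \|h_*\|_\infty\beta^{-\ell}$ with $\|h_*\|_\infty$ uniform.

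\textbf{Parts (ii) and (iii).} For (iii), Theorem~\ref{thm:exp conv}(2) and $H_\ell = \emptyset$ for $\ell < n(\Delta,\tilde H)$ give
\[
1-\ra_*\ \le\ \frac{1+C_1}{\bm(\bDelta_0)}\sum_{\ell\ge n(\Delta,\tilde H)}\beta^{-(\ell-1)}\bm(\bDelta_\ell)\ \le\ \frac{(1+C_1)e^{2C}\beta}{\kappa(1-\theta_0/\beta)}\,(\theta_0/\beta)^{\,n(\Delta,\tilde H)},
\]
which $\to 0$ as $n(\Delta,\tilde H)\to\infty$ since $\theta_0<\beta$; with $\ra_*<1$ from Theorem~\ref{thm:tower}(a) this is (iii). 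For (ii), using $\bpi_*\tmu_* = h_*\bm$ and the estimates above,
\[
\tmu_*\Big(\bigcup_{\ell>L}\Delta_\ell\Big) = \sum_{\ell>L}\int_{\bDelta_\ell} h_*\,d\bm\ \le\ \|h_*\|_\infty e^{C}C_0\sum_{\ell>L}(\theta_0/\beta)^\ell\ \le\ K\beta^{-L}\theta_0^{L}
\]
with $K = \|h_*\|_\infty e^{C}C_0/(1-\theta_0/\beta)$ uniform.

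\textbf{Part (i).} On the support of $\tmu_*$ every backward orbit avoids $\tilde H$ (conditional invariance gives $\F_*^n\tmu_* = \ra_*^n\tmu_*$, supported on $F^n(\Delta^n)$, for all $n$), and backward passage from level $\ell$ to level $0$ uses only non-return steps, along which $F$ is the identity on the $\Lambda$-coordinate --- hence a $\mu_\gamma$-isometry on unstable leaves --- with $J\barF\equiv 1$. Iterating $\F_*\tmu_* = \ra_*\tmu_*$ therefore yields $\tmu_*|_{\Delta_\ell} = \ra_*^{-\ell}(F^\ell)_*(\tmu_*|_{S_\ell})$ for some $S_\ell\subset\Delta_0\cap\{R>\ell\}$, so the conditional densities obey $\rho_\gamma = \ra_*^{-\ell}(\rho_{\gamma_0}\circ F^{-\ell})$. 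It thus suffices to show $C_2^{-1}\le\rho_{\gamma_0}\le C_2$ on $\Delta_0$. Using $\bpi_*\tmu_* = h_*\bm$, the absolute continuity of stable holonomies and the fact that they preserve $m_\gamma$ (Lemma~\ref{lemma:jacobian}(1), property {\bf (P5)}(b)), together with $|u|\le C$, one relates $\rho_{\gamma_0}$ to $h_*|_{\bDelta_0}$ up to bounded distortion; the upper bound then comes from $h_*\le\|h_*\|_\infty$, and the lower bound from a uniform estimate $h_*\ge c_0>0$ on $\bDelta_0$.

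\textbf{Main obstacle.} The crux is that uniform lower bound $h_*\ge c_0>0$ on $\bDelta_0$. Strict positivity of $h_*$ there is the standard Perron--Frobenius feature: $h_*$ is a nonnegative eigenfunction of the mixing, quasi-compact $\bLp$, positive on some surviving element, and the renewal relation $\ra_* h_*|_{\bDelta_0} = \bLp h_*|_{\bDelta_0}$ spreads this positivity over $\bDelta_0$. For the hole-uniform quantitative version the plan is a Harnack-type argument: the mixing of the surviving dynamics (Proposition~\ref{prop:horseshoe}(a)(ii)) furnishes, with data uniform in $\sigma$, an $s$-subset making a full return to $\bDelta_0$ within boundedly many steps while avoiding $\tilde H$; the log-Lipschitz control of $J\barF^R$ (Lemma~\ref{lemma:jacobian}(3), with $C_1$ uniform) and the uniform bound on $\|h_*\|$ convert the positivity of $h_*$ into a comparison $\inf_{\bDelta_0} h_* \ge c_0\sup_{\bDelta_0} h_*$ with $c_0$ uniform, while the mixing also prevents $\tmu_*$ from concentrating on high levels uniformly in $\sigma$, which bounds $\int_{\bDelta_0} h_*\,d\bm$ below and pins down $c_0$. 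Securing the uniformity of all the constants that enter --- by tracking the proofs of \cite{demers bruin} and invoking Proposition~\ref{prop:horseshoe}(b) --- is, I expect, the most laborious step; the rest is the bookkeeping above.
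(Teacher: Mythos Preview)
Your treatment of (iii) is essentially the paper's, and your (ii) is a minor variant (the paper derives (ii) from (i) via $\rho_\gamma \le C_2\ra_*^{-\ell}$ and $\ra_*>\beta$ rather than through $\|h_*\|_\infty$, but the arithmetic is the same).

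For (i), however, the paper takes a quite different and more direct route that completely bypasses your ``main obstacle.'' Instead of analyzing $h_*$, it fixes a single leaf $\gamma_0 \in \Gamma^u(\Delta_0)$, lets $\eta_0$ be Lebesgue on $\gamma_0$, checks $\eta_0 \in \tG$, and pushes it forward. The point is that for $\eta^{(n)} = \F_*^n\eta_0/|\F_*^n\eta_0|$, the conditional density on any image leaf $\gamma$ is a sum over inverse branches of $(J_{\mu_\gamma}\F^n)^{-1}$, and the distortion bound {\bf (P4)(b)} gives $\sup_\gamma \rho_\gamma^{(n)}/\inf_\gamma \rho_\gamma^{(n)} \le e^C$ uniformly in $n$ and $\sigma$. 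Together with $\mu^u(\Lambda^{(\sigma)}) \ge \kappa$ this pins $\rho_\gamma^{(n)}$ between fixed constants on $\Delta_0$; the bounds survive the weak limit $\eta^{(n)}\to\tmu_*$, and conditional invariance transports them up the tower with the factor $\ra_*^{-\ell}$. No lower bound on $h_*$ is ever needed --- indeed such a bound is a \emph{consequence} of (i), not an input.

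Your route via $h_*$ is not wrong in spirit, but it has a second gap beyond the one you flag. The relation $\bpi_*\tmu_* = h_*\bm$ says only that $h_*(\bar x) = \int_{\gamma^s(x)} \rho_\gamma\, d\tmu_*^s(\gamma)$, an \emph{average} of leafwise densities against the transverse measure. Bounds on $h_*$ do not by themselves bound each individual $\rho_\gamma$; your appeal to stable holonomy and Lemma~\ref{lemma:jacobian}(1) shows that $m_\gamma$ is holonomy-invariant, but says nothing about the conditional measures of $\tmu_*$ across leaves. Closing this would require an additional argument that all the $\rho_\gamma$ on $\Delta_0$ are mutually comparable --- which is exactly what the paper's single-leaf pushforward with {\bf (P4)(b)} delivers for free.
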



\section{Proofs of Theorems on the Tower}
\label{tower proofs}

The following notational abbreviations {\it are used only
in this section}:

\smallskip
\noindent -- We will sometimes drop the $\ \tilde{} \ $ used to
distinguish between objects on $M$ and corresponding

objects on $\Delta$; there can be
no ambiguity as long as we restrict ourselves to the towers.

\smallskip
\noindent -- We will at times drop the $\mathring \ $ in $\F$. Specifically,
$F^n_*\eta$ is to be interpreted as $\F^n_*\eta$, and $\barF^n_*\bareta$

is to be interpreted the same way.

\medskip
We focus on the stable direction, since that is what lies between
Theorem \ref{thm:exp conv}
and Theorem~\ref{thm:tower}. The following is a class of test functions on $\hDelta$
that are Lipschitz in the stable
direction. For $\gamma^s \in \Gamma^s(\Delta)$ and $x,y \in \gamma^s$,
we denote by $d^s(x, y)$ the distance between $\pi(x)$ and $\pi(y)$ according to
the $p$-metric, so that $d^s(F^nx, F^ny) \le \lambda^{-n} d^s(x,y)$
for some $\lambda>1$ (see Sect.~2.3).
Let $\mathcal{F}_b$ be the set of bounded, measurable functions on
$\hDelta$. For $\vf \in \mathcal{F}_b$,
we define $|\vf|^s_{\lip}$ to be the Lipshitz constant
of $\vf$ restricted to stable leaves, i.e.
\[
|\vf|^s_{\lip} = \sup_{\gamma^s\in \Gamma^s(\hDelta)} \sup_{x,y \in \gamma^s}
            \frac{\vf(x)-\vf(y)}{d_s(x,y)}  ,
\]
and let Lip$^s(\hDelta) = \{ \vf \in \mathcal{F}_b : |\vf|^s_{\lip} < \infty \}$.


\subsection{Proof of Theorem~\ref{thm:tower}}

\noindent {\bf A. Escape rates}

\smallskip
Theorem~\ref{thm:tower}(a) follows easily from Theorem \ref{thm:exp conv}
as $(F, \Delta, H)$ and $(\barF, \bDelta, \bH)$ have the same escape rate.
In more detail, let $\eta \in\tG$ and notice that since $H$ is a union of $\dlj$,
we have, for each $n$,
$\eta(\Delta^n) = \bareta (\bDelta^n)$ where $\bareta = \bpi_* \eta$.
By definition of $\tG$, $\frac{d\bareta}{d\bm} = \brho \in \B$ with $d(\brho)>0$.
Theorem~\ref{thm:exp conv}(1) then implies that $\ra_*^{-n} \bLp^n\brho$
converges  to $d(\brho) h_*$.  Since the convergence
is in the $\| \cdot \|$-norm, we may integrate with respect to $\bm$.  Noting that
$\int_{\bDelta} \bLp^n \brho \, d\bm =  \int_{\bDelta^n} \brho \, d\bm
= \bareta(\bDelta^n)$, we have
\begin{equation}
\label{eq:escape conv}
\lim_{n \to \infty} \ra_*^{-n} \eta(\Delta^n) =
\lim_{n \to \infty} \ra_*^{-n} \bareta(\bDelta^n) = d(\brho) .
\end{equation}
Thus $-\log \ra_*$, where $\ra_*$ is the eigenvalue in Theorem \ref{thm:exp conv},
is the common escape rate of $(F, \Delta, H)$ for initial distributions
 in $\tG$.

\bigskip
\noindent {\bf B. Uniqueness of limiting distributions}

\smallskip
We first prove uniqueness postponing the proof of existence of limiting
distributions.

Given $\eta \in \tG$, we define a measure $\eta^s$ on
$\Gamma^u(\Delta)$, i.e. a measure transverse to unstable leaves, as follows:
Set $\eta^s(\Gamma^u(\dlj))=0$ if $\eta(\dlj)=0$.
If $\eta(\dlj) \ne 0$, then $\eta^s|_{\Gamma^u(\dlj)}$ is the factor measure
of $\eta|_{\dlj}$  {\it normalized},
and $\{ \rho dm_\gamma, \gamma \in \Gamma^u(\dlj)\}$ is the disintegration of
$\eta$ into measures on unstable leaves. We will use the convention that
$\eta^s(\dlj)=1$, and $ \rho|_\gamma$ is the density with respect to
$m_\gamma$, so that $\int \rho|_\gamma d\eta^s(\gamma) = \brho$ where
$d \bpi_*\eta = \brho d\bm$.

\begin{lemma}
\label{lemma:unique} Let $\eta_1$ and $\eta_2 \in \tG$. Suppose for $i=1,2$,
there exists $\mu_*^i$ such that
$$\lim_{n\to \infty}\ra_*^{-n} F_*^n\eta_i = d(\brho_i) \mu_*^i$$ where
$\brho_i$ is the density of $\bpi_* \eta_i$ Then $\mu_*^1=\mu_*^2$.
\end{lemma}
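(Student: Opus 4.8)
\medskip
\noindent\emph{Proof strategy.}
The plan is to push everything down to the expanding tower, where Theorem~\ref{thm:exp conv} already gives complete control, and to absorb the loss of information in passing from $\Delta$ to $\bDelta$ into the uniform stable contraction $d^s(F^nx,F^ny)\le\lambda^{-n}d^s(x,y)$. Since finite Borel measures on $\hDelta$ are separated by bounded functions that are continuous on each $\dlj$ and Lipschitz along stable leaves — disintegrate over the unstable foliation and note that products of a Lipschitz function of the leaf parameter with an arbitrary continuous function along leaves are of this form — it suffices to prove $\int\vf\,d\mu_*^1=\int\vf\,d\mu_*^2$ for every such $\vf$. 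Fix one, normalized so that $\|\vf\|_\infty\le1$ and $|\vf|^s_{\lip}\le1$.

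The heart of the matter is a two-scale decomposition. For $0<m<n$ write
\[
\int\vf\circ F^n\,d\eta_i \;=\; \int\vf\circ F^m\,d\big(F^{n-m}_*\eta_i\big).
\]
On each atom of the time-$m$ Markov partition $\bigvee_{k=0}^{m-1}F^{-k}\{\dlj\}$ — which is an $s$-subset, so no stable leaf straddles two atoms — the map $F^m$ is continuous and contracts stable leaves by $\lambda^{-m}$; as stable leaves have $p$-length $\le2\delta$, the function $\vf\circ F^m$ therefore differs uniformly on $\hDelta$ by at most $2\delta\lambda^{-m}$ from a function $\psi_m\circ\bpi$, where $\psi_m$ is a bounded function on $\bDelta$ with $\|\psi_m\|_\infty\le1$ that is constant on the atoms of $\bpi\big(\bigvee_{k=0}^{m-1}F^{-k}\{\dlj\}\big)$, a partition of $\bDelta$ finite on each level. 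Crucially $\psi_m$ depends only on $\vf$, $m$, and the (fixed) tower, not on $i$. Using $\bpi_*F^{n-m}_*\eta_i=(\bLp^{n-m}\brho_i)\bm$ and $F^{n-m}_*\eta_i(\hDelta)=\eta_i(\Delta^{n-m})$,
\[
\Big|\ra_*^{-n}\!\!\int\vf\circ F^n\,d\eta_i\;-\;\ra_*^{-n}\!\!\int\psi_m\,\bLp^{n-m}\brho_i\,d\bm\Big|\;\le\;2\delta\lambda^{-m}\,\ra_*^{-n}\eta_i(\Delta^{n-m}).
\]

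Now let $n\to\infty$. By part~A (equation~(\ref{eq:escape conv})), $\ra_*^{-(n-m)}\eta_i(\Delta^{n-m})\to d(\brho_i)$, so the right-hand side tends to $2\delta(\lambda\ra_*)^{-m}d(\brho_i)$. By Theorem~\ref{thm:exp conv}(1), $\ra_*^{-(n-m)}\bLp^{n-m}\brho_i\to d(\brho_i)h_*$ in $\|\cdot\|$, hence in $L^1(\bm)$ — the weights $\beta^\ell$ together with $\bm\{R>\ell\}\le C_0\theta_0^\ell$ and $\theta_0<\beta$ give $\|\cdot\|_{L^1(\bm)}\le\mathrm{const}\cdot\|\cdot\|$ — so, $\psi_m$ being bounded, $\ra_*^{-n}\int\psi_m\,\bLp^{n-m}\brho_i\,d\bm\to\ra_*^{-m}d(\brho_i)\int\psi_m h_*\,d\bm$. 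Since $\ra_*^{-n}\int\vf\circ F^n\,d\eta_i\to d(\brho_i)\int\vf\,d\mu_*^i$ by hypothesis, dividing by $d(\brho_i)>0$ yields, for every $m$,
\[
\Big|\int\vf\,d\mu_*^i\;-\;\ra_*^{-m}\!\!\int\psi_m h_*\,d\bm\Big|\;\le\;2\delta(\lambda\ra_*)^{-m}.
\]
Here $\lambda\ra_*>1$ — e.g.\ because $\ra_*>\beta>\sqrt{\alpha}\ge\lambda^{-1/2}$, or simply because $\ra_*$ is close to $1$ for small holes — and the right-hand side depends neither on $i$ nor on the (also $i$-independent) quantity $\ra_*^{-m}\int\psi_m h_*\,d\bm$. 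Letting $m\to\infty$ forces $\int\vf\,d\mu_*^1=\lim_m\ra_*^{-m}\int\psi_m h_*\,d\bm=\int\vf\,d\mu_*^2$; since $\vf$ was arbitrary in the separating class, $\mu_*^1=\mu_*^2$. (The common value is, unsurprisingly, $\int\vf\,d\tmu_*$ for the measure $\tmu_*$ of Theorem~\ref{thm:tower}(b), once its existence is established.)

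I expect the main obstacle to be the careful construction of $\psi_m$: one must verify that $F^m$ really is continuous on the atoms of $\bigvee_{k=0}^{m-1}F^{-k}\{\dlj\}$, that these atoms are $s$-subsets (so collapsing stable leaves is compatible with the partition and the sup-norm estimate survives intact), and that the resulting $\psi_m$ is an honest bounded function on $\bDelta$, constant on a partition finite on each level, so that the $\|\cdot\|$-convergence in Theorem~\ref{thm:exp conv}(1) can be integrated against it. A second, more routine point to be pinned down is that bounded functions continuous on each $\dlj$ and Lipschitz along stable leaves separate the finite Borel measures on $\hDelta$; this is where one uses the continuity of $\Gamma^s(\Delta)$ and $\Gamma^u(\Delta)$ together with mollification along the stable foliation. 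Everything else — the decomposition, the error estimate, and the passage to the limit — is a direct application of the uniform stable contraction and Theorem~\ref{thm:exp conv}.
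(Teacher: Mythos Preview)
Your argument is correct, but it takes a different route from the paper's. The paper first observes that $\bpi_*\mu_*^i=\bmu_*$ for both $i$ (by commuting $\bpi_*$ with $F_*$ and invoking Theorem~\ref{thm:exp conv}), then applies Lemma~\ref{lemma:close convergence} to the pair $\mu_*^1,\mu_*^2$ themselves: since they have equal projections, $\ra_*^{-n}|F_*^n\mu_*^1(\vf)-F_*^n\mu_*^2(\vf)|\to0$, and conditional invariance gives $\ra_*^{-n}F_*^n\mu_*^i=\mu_*^i$, forcing equality. Your two-scale approximation $\vf\circ F^m\approx\psi_m\circ\bpi$ is instead much closer in spirit to the paper's \emph{existence} argument (Lemma~\ref{lemma:cauchy}), where the same splitting through $\bpi_*^{-1}\bpi_*$ appears. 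What your route buys is that you never need to check that the $\mu_*^i$ are conditionally invariant---a fact the paper uses but which is not entirely immediate from weak convergence, since $\vf\circ\F$ need not lie in the same test class as $\vf$. What the paper's route buys is modularity: Lemma~\ref{lemma:close convergence} is isolated and reused.

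One small sharpening: your error bound $2\delta\lambda^{-m}\,\ra_*^{-n}\eta_i(\Delta^{n-m})$ uses the full mass of $\F_*^{n-m}\eta_i$, but the integral of $|\vf\circ F^m-\psi_m\circ\bpi|$ is supported on $\Delta^m$, where $\F_*^{n-m}\eta_i(\Delta^m)=\eta_i(\Delta^n)$. With this tighter bound the limiting error is $2\delta\lambda^{-m}d(\brho_i)$ rather than $2\delta(\lambda\ra_*)^{-m}d(\brho_i)$, and you no longer need $\lambda\ra_*>1$---which, while true under the small-hole hypothesis, is not a consequence of $\ra_*>\beta>\sqrt{\alpha}$ alone (there is no stated relation between $\alpha$ and $\lambda$). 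The two points you flag at the end (that the time-$m$ partition consists of $s$-subsets so $\psi_m$ is well defined, and that $\mathrm{Lip}^s$ separates measures) are genuine but routine; the paper handles the first implicitly and the second by passing to $C^0_b(\hDelta)$ in Lemma~\ref{lemma:cauchy}.
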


The crux of the argument for Lemma \ref{lemma:unique} is contained in

\begin{lemma}
\label{lemma:close convergence}
Let $\eta_1$ and $\eta_2$ be as above, and assume $\brho_1=\brho_2$.
Then for all $\vf \in \mbox{Lip}^s(\hDelta)$,
$\ra_*^{-n} |F^n_*\eta_1(\vf) - F^n_*\eta_2(\vf)| \to 0$ exponentially fast as $n \to \infty$.
\end{lemma}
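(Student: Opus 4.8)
The plan is to pass everything down to the expanding tower $\bDelta$, where Theorem~\ref{thm:exp conv}(1) gives exponential convergence of $\ra_*^{-n}\bLp^n\brho$ to $d(\brho)h_*$ in the $\|\cdot\|$-norm, and to use the Lipschitz-in-the-stable-direction hypothesis on $\vf$ to control the discrepancy between $F^n_*\eta_i(\vf)$ and its "quotient" value. First I would observe that since $\brho_1 = \brho_2 =: \brho$, the two measures $\bpi_*\eta_1$ and $\bpi_*\eta_2$ on $\bDelta$ coincide, hence $\bpi_* F^n_*\eta_1 = \barF^n_*(\bpi_*\eta_1) = \barF^n_*(\bpi_*\eta_2) = \bpi_* F^n_*\eta_2$ for every $n$; equivalently, for any function $\bpsi$ on $\bDelta$ we have $F^n_*\eta_1(\bpsi\circ\bpi) = F^n_*\eta_2(\bpsi\circ\bpi)$. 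So the difference $F^n_*\eta_1(\vf) - F^n_*\eta_2(\vf)$ is entirely a ``stable-direction'' phenomenon: it would vanish if $\vf$ were constant on stable leaves.

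The key quantitative step is then the following. Fix $\vf\in\mathrm{Lip}^s(\hDelta)$ with $|\vf|^s_{\lip} = 1$ (by scaling). For each $\ell,j$ choose a reference unstable leaf, or rather use the stable holonomy to define, for $x\in\Delta^n$, a point $\bar x$ in the stable leaf through $x$ lying on that reference leaf; since $x\in\Delta^n$ means $F^ix\notin\tilde H$ for $0\le i\le n$, and the horseshoe respects the hole, the whole stable leaf through $x$ stays in $\hDelta$ for these times, so $\bar x$ is again in $\Delta^n$ and $F^i\bar x\notin \tilde H$. Define $\bar\vf(x) = \vf(\bar x)$, a function constant on stable leaves, hence of the form $\bpsi_n\circ\bpi$ (the dependence on $n$ only through which leaves survive, which is harmless). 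By the above, $F^n_*\eta_1(\bar\vf) = F^n_*\eta_2(\bar\vf)$, so
\[
\bigl| F^n_*\eta_1(\vf) - F^n_*\eta_2(\vf)\bigr|
\;\le\; \sum_{i=1,2} \bigl| F^n_*\eta_i(\vf) - F^n_*\eta_i(\bar\vf)\bigr|
\;\le\; \sum_{i=1,2} \int_{\Delta^n} \bigl|\vf - \bar\vf\bigr|\, d(F^n_*\eta_i).
\]
Now for $x$ in level $\ell$ at time $n$, i.e.\ $x = F^n y$ with $y\in\Delta$, the point $x$ and $\bar x$ lie on a common stable leaf and have been contracted toward each other for $n - (\text{last return before }n)$ steps; using {\bf (P3)} / the contraction $d^s(F^kx,F^k\bar x)\le \lambda^{-k} d^s(x,\bar x)$ together with bounded geometry of the levels, one gets $|\vf(x)-\vf(\bar x)| \le |\vf|^s_{\lip}\, d^s(x,\bar x) \le C\lambda^{-(n-r(x))}$ where $r(x)$ is the time of the most recent return. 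The mass of points whose most recent return was long ago is controlled by the exponential return-time tail $\bm\{R>k\}\le C_0\theta_0^k$ (equivalently by Proposition~\ref{prop:uniform bounds}(ii) once $\tmu_*$ enters the picture, but at this stage it is cleaner to use the a priori bound $\ra_*^{-n}F^n_*\eta_i(\Delta^n)\to d(\brho)$ from \eqref{eq:escape conv} together with the level decomposition). Splitting the integral into $\{n - r(x) \le \epsilon n\}$ and its complement: on the first set the total $\ra_*^{-n}$-weighted mass is bounded by a geometric sum $\sum_{k>\epsilon n}(\text{return tail at }k)$ times a constant, hence exponentially small; on the second set $|\vf-\bar\vf|\le C\lambda^{-\epsilon n}$ while the total weighted mass is bounded uniformly in $n$ by \eqref{eq:escape conv}. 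Choosing $\epsilon$ so that both $\theta_0^{\epsilon}\ra_*^{-1}<1$ and $\lambda^{-\epsilon}<1$ gives $\ra_*^{-n}|F^n_*\eta_1(\vf)-F^n_*\eta_2(\vf)| \le C\tau_1^n$ for some $\tau_1<1$, as desired.

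The main obstacle I anticipate is making the comparison map $x\mapsto\bar x$ genuinely well-defined and measurable in a way compatible with the hole — one must be sure that sliding along stable leaves from a surviving point lands on a surviving point and does not change the $\bDelta$-projection, so that the cancellation $F^n_*\eta_1(\bar\vf)=F^n_*\eta_2(\bar\vf)$ is exact. This is where ``the horseshoe respects the hole'' is used in an essential way: $\tilde H$ is a union of $\dlj$'s, so membership in $\Delta^n$ is constant on stable leaves. A secondary technical point is bookkeeping the level decomposition of $F^n_*\eta_i$ and matching ``time since last return'' with the return-time tail; this is routine given the tower structure and the exponential tail hypothesis, but needs to be written carefully so that the constants are uniform in $n$ (and, for later use, in $\sigma$, via Proposition~\ref{prop:uniform bounds}).
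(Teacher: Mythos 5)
Your plan has the right cancellation, but the place where you apply the Lipschitz hypothesis is wrong, and it costs you the entire decay.

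You replace $\vf$ by the stable-constant function $\bar\vf$ defined by holonomy at \emph{time $n$}, and then bound
$|F^n_*\eta_1(\vf)-F^n_*\eta_2(\vf)|\le\sum_i\int|\vf-\bar\vf|\,d(\F^n_*\eta_i)$.
For $x$ in the support of $\F^n_*\eta_i$ lying in $\hDelta_\ell$, the stable leaf through $x$ projects to a stable curve of $p$-diameter $\le C\lambda^{-\ell}$, so the best you can say is $|\vf(x)-\bar\vf(x)|\le C|\vf|^s_{\lip}\lambda^{-\ell}$, where $\ell$ is the level \emph{at time $n$}. That $\ell$ is the time since the last return, not $n$. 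The resulting bound is
$\ra_*^{-n}\sum_\ell\lambda^{-\ell}(\F^n_*\eta_i)(\hDelta_\ell)$, which by Theorem~\ref{thm:exp conv}(1) converges to
$d(\brho)\sum_\ell\lambda^{-\ell}\int_{\bDelta_\ell}h_*\,d\bm$, a strictly positive constant. So the quantity is bounded but does not decay, let alone exponentially. The split you propose into $\{n-r(x)\le\epsilon n\}$ and its complement does not rescue this: the set of \emph{low} levels (recent returns) carries the bulk of the mass and is precisely where $|\vf-\bar\vf|$ is not small; the return-tail estimate controls only the \emph{high}-level set, which is where $|\vf-\bar\vf|$ is already small. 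Both good estimates land on the same piece of the decomposition, leaving the other piece uncontrolled.

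The missing idea is to make the comparison at \emph{time $0$} rather than at time $n$. Write $\F^n_*\eta_i(\vf)=\int_{\Delta^n}\vf\circ F^n\,d\eta_i$, disintegrate $\eta_i$ over the partition elements $\dlj$ and their time-$0$ stable leaves $\gamma^s$, and replace $\vf\circ F^n$ on each $\gamma^s$ by its average $\bvf_n=\int_{\gamma^s}\vf\circ F^n\,d\eta^s_1$. Because $\brho_1=\brho_2$ the $\bvf_n$-terms cancel exactly, and for the remainder the Lipschitz bound sees the \emph{full} $n$-step contraction of $F^n$ on $\gamma^s$: $|\vf\circ F^n(y)-\bvf_n(y)|\le|\vf|^s_{\lip}\,\mathrm{diam}\bigl(F^n(\gamma^s)\bigr)\le C|\vf|^s_{\lip}\lambda^{-n}$. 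Summing and using $\ra_*^{-n}|\bLp^n\brho_1|_1\to d(\brho_1)$ then gives the claimed exponential rate $\lambda^{-n}$. This is what the paper does; your version, by pushing the holonomy comparison forward to time $n$, forfeits the contraction accumulated before the last return.
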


\begin{proof}
For $i=1,2$, let $\eta^s_i$ and $\rho_i$ be the (normalized)
factor measure and (unnormalized) densities on $\gamma \in \Gamma^u(\Delta)$
of $\eta_i$ as described above.

We consider functions which are constant along
stable leaves to be defined on both $\hDelta$ and $\bDelta^0$ and do not distinguish between the two versions of such functions.
For each $\dlj$, let $\hat{\gamma} \in \Gamma^u(\dlj)$ be
a representative leaf. Then
\begin{equation}
\label{eq:factor diff}
|F_*^n \eta_1 (\vf) - F_*^n \eta_2 (\vf) |
\; \leq \; \sum_{\ell,j}
        \int_{\hat{\gamma} \cap \Delta^n_{\ell,j}} dm_{\hat{\gamma}}
        \left| \int_{\gamma^s} \rho_1 \, \vf \circ F^n d\eta^s_1
                -  \int_{\gamma^s} \rho_2 \, \vf \circ F^n d\eta^s_2 \right|  .
\end{equation}
Next fix $x \in \hat{\gamma} \cap \Delta^n$ and estimate the integrals on $\gamma^s(x)$.
Define $\bvf_n = \int_{\gamma^s} \vf \circ F^n \, d\eta^s_1$. Then,
\[
\begin{split}
\left| \int_{\gamma^s} \rho_1 \, \vf \circ F^n d\eta^s_1
                -  \int_{\gamma^s} \rho_2 \, \vf \circ F^n d\eta^s_2 \right|
\leq & \left| \int_{\gamma^s} \rho_1 \, (\vf \circ F^n - \bvf_n) d\eta^s_1 \right|
+ \left| \int_{\gamma^s} \rho_2 \, (\vf \circ F^n - \bvf_n) d\eta^s_2 \right| \\
& + \left| \int_{\gamma^s} \bvf_n \rho_1 \, d\eta^s_1
                 -  \int_{\gamma^s} \bvf_n \rho_2 \, d\eta^s_2 \right| \ .
\end{split}
\]
Since $\bvf_n$ is constant on $\gamma^s$ and $\brho_1=\brho_2$,
the third term above is 0.  For the first
two terms, we note that for each $y \in \gamma^s(x)$,
$|\bvf_n(y) - \vf \circ F^n(y)| \leq |\vf |^s_{\lip} \lambda^{-n}$. Thus
\begin{equation}
\label{eq:close proj}
\ra_*^{-n}|F_*^n \mu_1 (\vf) -  F_*^n \mu_2 (\vf) |
\leq \ra_*^{-n} \sum_{\ell,j} \int_{\bDelta^n_{\ell,j}} 2 \brho_1 d\bm \,
 | \vf |^s_{\lip} \lambda^{-n}
= 2 \ra_*^{-n} | \bLp^n \brho_1|_1 | \vf |^s_{\lip} \lambda^{-n} ,
\end{equation}
which proves the lemma since $\ra_*^{-n} |\bLp^n \brho_1| \to d(\brho_1)$
by Theorem~\ref{thm:exp conv}.
\end{proof}

\begin{remark} {\rm
We have used in the proof above a property of the billiard maps,
namely $d^s(F^nx, F^ny) \le \lambda^{-n} d^s(x,y)$. For general towers,
one has only the contraction guaranteed by {\bf (P3)} which is nonuniform. It is not
hard to see that the lemma holds in the more general case with the exponential
rate given by $\max \{ \alpha^{\frac{n}{2}}, \beta^{-n}\theta_0^n \}$
in the place of $\lambda^{-n}$;
we leave the proof to the interested reader.}
\end{remark}

\begin{proof}[Proof of Lemma \ref{lemma:unique}]
Let $\bmu_* = h_* \bm$ be the
conditionally invariant measure given by
Theorem~\ref{thm:exp conv}.  For $i=1,2$, we have, on the one hand,
\[
\lim_{n\to \infty} \ra_*^{-n} \barF_*^n \bareta_i = d(\brho_i) \bmu_* ,
\]
which follows from Theorem~\ref{thm:exp conv}, and on the other,
\[
\lim_{n\to \infty} \ra_*^{-n} \bpi_* F^n_* \eta_i = d(\brho_i) \bpi_* \mu_*^i,
\]
which follows from the hypothesis of the lemma.
 Since $\bpi_* F^n_* \eta_i = \barF^n_* \bpi_* \eta_i$ for each $n \geq 0$,
we have
$\bpi_* \mu_*^1 = \bmu_* = \bpi_* \mu_*^2$.
Thus $\ra_*^{-n} |F_*^n \mu_*^1 - F_*^n \mu_*^2| \to 0$ as $n \to \infty$ by
Lemma~\ref{lemma:close convergence}.
But $\ra_*^{-n} F_*^n \mu_*^i = \mu_*^i$ since $\mu_*^i$ is conditionally invariant.
Hence $\mu_*^1 = \mu_*^2$.
\end{proof}

\noindent {\bf C. Convergence to conditionally invariant measure}

\smallskip
For a probability measure $\eta$ on $\hDelta$,
$|F^n_*\eta| = \eta(\Delta^n) = \bpi_* \eta(\bDelta^n)$.
So for $\eta \in \tG$, \eqref{eq:escape conv} implies
$\lim_{n\to \infty} \ra_*^{-n} |F^n_*\eta| = d(\brho) >0$
where $\brho$ is the density of $\bareta = \bpi_* \eta$.
More than that is true:

\begin{lemma}
\label{lemma:cauchy}
$\ra_*^{-n} F_*^n \eta/d(\brho)$ converges weakly to a conditionally invariant
probability measure $\mu_*$ as $n \to \infty$.
\end{lemma}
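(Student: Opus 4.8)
The plan is to show that $\mu^{(n)} := \ra_*^{-n}\F^n_*\eta$ converges weakly to a finite measure $\mu$ with $|\mu| = d(\brho)$ and $\F_*\mu = \ra_*\mu$; then $\mu_* := \mu/d(\brho)$ is the asserted conditionally invariant probability measure, since $|\mu^{(n)}| \to d(\brho)$ by \eqref{eq:escape conv}. Everything constant along stable leaves is already under control, because $\bpi_*\mu^{(n)} = \ra_*^{-n}\bLp^n\brho\,d\bm$ converges to $d(\brho)h_*\bm$ in $\|\cdot\|$, hence in $L^1(\bm)$ and weakly, by Theorem~\ref{thm:exp conv}. All the remaining work is transverse to this, and there the only tool is the uniform stable contraction $d^s(F^nx,F^ny)\le\lambda^{-n}d^s(x,y)$. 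First I would note that $\{\mu^{(n)}\}$ is tight: since $|g|\le\|g\|_\infty\beta^{-\ell}$ on $\bDelta_\ell$ and $\bm(\bDelta_\ell)\le C_0\theta_0^\ell$ with $\theta_0<\beta$,
\[
\mu^{(n)}\Big(\bigcup_{\ell>L}\Delta_\ell\Big) \;=\; \int_{\cup_{\ell>L}\bDelta_\ell}\big(\ra_*^{-n}\bLp^n\brho\big)\,d\bm \;\le\; C_0\,\Big(\sup_n\|\ra_*^{-n}\bLp^n\brho\|\Big)\sum_{\ell>L}(\theta_0/\beta)^{\ell} ,
\]
which $\to 0$ as $L\to\infty$, uniformly in $n$.

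The core is a Cauchy estimate for $\mu^{(n)}(\vf)$ with $\vf\in\mathrm{Lip}^s(\hDelta)$ bounded. Fix $m\ge1$, write $\F^{N+m}=\F^m\circ\F^N$, and on the $\bpi$-saturated surviving set $\Delta^{(m)}:=\cap_{i=0}^m F^{-i}\hDelta$ replace $\vf\circ F^m$ by $\psi_m(x):=\vf(F^m\hat x)$, where $\hat x$ is the intersection of the stable leaf through $x$ with a fixed reference unstable leaf of the partition element containing $x$. Then $\psi_m$ is constant along stable leaves, so it descends to a bounded function $\overline{\psi}_m$ on $\bpi(\Delta^{(m)})$, and by stable contraction $|\vf\circ F^m-\psi_m|\le 2\delta\,|\vf|^s_{\lip}\lambda^{-m}$ on $\Delta^{(m)}$. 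Using that $\mu^{(N)}(\Delta^{(m)})=\ra_*^{-N}\eta(\Delta^{N+m})=\ra_*^{m}|\mu^{(N+m)}|\le\ra_*^{m}C'$ with $C':=\sup_k|\mu^{(k)}|<\infty$, the factors $\ra_*^{\pm m}$ cancel and one gets, uniformly in $N$,
\[
\Big|\,\mu^{(N+m)}(\vf)\;-\;\ra_*^{-m}\!\!\int_{\bpi(\Delta^{(m)})}\!\!\overline{\psi}_m\,\big(\ra_*^{-N}\bLp^N\brho\big)\,d\bm\,\Big|\;\le\;2\delta C'|\vf|^s_{\lip}\lambda^{-m}\;=:\;\eps_m .
\]
Letting $N\to\infty$ with $m$ fixed, the displayed integral converges (because $\ra_*^{-N}\bLp^N\brho\to d(\brho)h_*$ in $L^1(\bm)$ and $\overline{\psi}_m$ is bounded), whence $\limsup_n\mu^{(n)}(\vf)-\liminf_n\mu^{(n)}(\vf)\le 2\eps_m$; since $\eps_m\to0$, $\lim_n\mu^{(n)}(\vf)$ exists. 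Combined with tightness and the uniform density of bounded Lipschitz functions in $C_c(\Delta)$ (and $\sup_n|\mu^{(n)}|<\infty$), this gives weak convergence $\mu^{(n)}\to\mu$ with $|\mu|=d(\brho)$.

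For conditional invariance I would pass to the limit in $\ra_*\mu^{(n+1)}=\F_*\mu^{(n)}$, that is, in $\ra_*\mu^{(n+1)}(\vf)=\mu^{(n)}\big((\vf\circ F)\mathbf 1_{F^{-1}\hDelta}\big)$ for $\vf\in C_b(\Delta)$. The function $(\vf\circ F)\mathbf 1_{F^{-1}\hDelta}$ is bounded, and its discontinuity set---together with the discontinuity set of $F$, the set $\partial\tilde H$, and their $F$-preimages---lies, by the Markov structure of the tower, in a countable union $\mathcal S$ of stable leaves (and points). Each stable leaf is $\bpi$-saturated and collapsed by $\bpi$ to a point, so $\mu$ is non-atomic and $\mu(\mathcal S)=\bpi_*\mu(\bpi(\mathcal S))=d(\brho)\,h_*\bm(\bpi(\mathcal S))=0$, since $h_*\bm$ is non-atomic. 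Hence the continuous-mapping theorem applies and $\mu^{(n)}\big((\vf\circ F)\mathbf 1_{F^{-1}\hDelta}\big)\to\mu\big((\vf\circ F)\mathbf 1_{F^{-1}\hDelta}\big)=\F_*\mu(\vf)$, while the left side tends to $\ra_*\mu(\vf)$; thus $\F_*\mu=\ra_*\mu$. \emph{The main obstacle is the Cauchy estimate above}: Theorem~\ref{thm:exp conv} controls the quotient tower but says nothing transverse to unstable leaves, and one must exploit that applying $F$ many times renders $\vf\circ F^m$ essentially constant on stable leaves, thereby reducing the transverse behaviour to the already-understood quotient dynamics; the discontinuity-set argument in the last step is routine given the structure of the tower.
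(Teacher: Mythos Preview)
Your proof is correct and follows the same core strategy as the paper: establish that $\ra_*^{-n}\F_*^n\eta(\vf)$ is Cauchy for $\vf\in\mbox{Lip}^s(\hDelta)$ by using stable contraction to reduce to the quotient tower, where Theorem~\ref{thm:exp conv} applies. The implementations differ in two places worth noting. For the Cauchy estimate, the paper introduces the stable-leaf average $\bvf_n(x) = \int_{\gamma^s(x)}\vf\circ\F^n\,d\mu^s$ (for a fixed transverse measure $\mu^s$) and a three-term splitting through the intermediary $\bpi_*^{-1}\bpi_*\F_*^k\eta$, invoking Lemma~\ref{lemma:close convergence}; your single-estimate route via a reference unstable leaf $\hat x$ is more direct and equally clean. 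For conditional invariance, however, the paper bypasses your continuous-mapping/discontinuity-set argument entirely by the algebraic identity $\F_*^n\eta(\vf\circ\F) = \F_*^{n+1}\eta(\vf)$, from which $\mu_*(\vf\circ\F) = \ra_*\mu_*(\vf)$ follows in one line; this is simpler and avoids having to check that $\mu$ assigns zero mass to the discontinuity locus of $F$ and $\partial\tilde H$.
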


This is half of Theorem~\ref{thm:tower}(b). Once we have this, it will follow
immediately that
\begin{equation}
\label{eq:alt conv}
\lim_{n\to \infty} \frac{F^n_*\eta}{|F^n_*\eta|} =
\lim_{n\to \infty} \frac{\ra_*^{-n} F^n_*\eta}{\ra_*^{-n} |F^n_*\eta|} = \mu_* \ ,
\end{equation}
which is the other half.

We will use the following algorithm to
``lift" measures from $\bDelta$ to $\Delta$: Fix a measure
$\mu^s$ on $\Gamma^u(\Delta)$ with $\mu^s(\Gamma^u(\dlj))=1$.
Given $\bareta$ on $\bDelta$ with density $\brho$, we define
$\bpi^{-1}_* \bareta$ to be the measure on $\Delta$ with the property
that restricted to each $\dlj$, $\bpi^{-1}_* \bareta$ decomposes into
the factor measure $\mu^s$ and leaf measures $\{\rho dm_\gamma\}$ where
$\rho|_{\bpi^{-1}(x)} \equiv \brho(x)$.
Notice that $\bpi_* \bpi_*^{-1} \bareta = \bareta$.

\begin{proof}[Proof of Lemma \ref{lemma:cauchy}]
Our first step is to fix $\vf \in \mbox{Lip}^s(\hDelta)$ and show that
$\ra_*^{-n}F_*^n \eta(\vf)$ is a Cauchy sequence.
For a fixed $\mu^s$ as above, and let
$\bvf_n(x) = \int_{\gamma^s(x)} \vf \circ \F^n \, d\mu^s$.
Define $\bareta = \bpi_* \eta$.  Since $\eta \in \tG$, $\bareta$ has density
$\brho \in \B$ with $d(\brho)>0$.
Then by definition of $\bpi_*^{-1}$,
\begin{equation}
\label{eq:average}
(\bpi_*^{-1} \bpi_* \eta) (\vf \circ \F^n) = \sum_{\ell,j}
\int_{\Gamma^u(\dlj)} d\mu^s(\gamma) \int_{\gamma^u} \vf\circ \F^n \, \brho \,
    dm_\gamma
  = \sum_{\ell,j} \int_{\bDelta_{\ell,j}} \brho \, \bvf_n \, d\bm
  = \bpi_*\eta(\bvf_n) .
\end{equation}
For $n, k_1, k_2 \geq 0$, write
\begin{equation}
\label{eq:split}
\begin{split}
 | \ra_*^{-n-k_1}F_*^{n+k_1}  \eta(\vf) - \ra_*^{-n-k_2} & F_*^{n+k_2}\eta(\vf) |
\leq \ra_*^{-n-k_1}| F_*^{n+k_1}\eta(\vf) - F_*^n \bpi_*^{-1} \bpi_* F_*^{k_1}\eta(\vf) | \\
& + | \ra_*^{-n-k_1}F_*^n \bpi_*^{-1} \bpi_* F_*^{k_1}\eta(\vf)
- \ra_*^{-n-k_2} F_*^n \bpi_*^{-1} \bpi_* F_*^{k_2}\eta(\vf) | \\
& + \ra_*^{-n-k_2} | F_*^n \bpi_*^{-1} \bpi_* F_*^{k_2}\eta(\vf) \eta(\vf) - F_*^{n+k_2}\eta(\vf) |\ .
\end{split}
\end{equation}
The first and third terms of \eqref{eq:split} are estimated using
Lemma~\ref{lemma:close convergence} since
$\bpi_*(\ra_*^{-k_i} F_*^{k_i}\eta)  = \bpi_*(\ra_*^{-k_i} \bpi_*^{-1} \bpi_* F_*^{k_i}\eta)$ for $i = 1,2$.
Thus by Lemma~\ref{lemma:close convergence},
\[
\begin{split}
& \ra_*^{-n-k_i} | F_*^{n+k_i}\eta(\vf) - F_*^n \bpi_*^{-1} \bpi_* F_*^{k_i}\eta(\vf) |
\leq C' d(\brho)  (|\vf|^s_{\lip} + |\vf|_\infty) \zeta^n
\end{split}
\]
for some $C'>0$ and $\zeta < 1$.

We now fix $n$ and estimate the second term of \eqref{eq:split}. Due to
\eqref{eq:average}, for any $k\geq 0$ we have
\[
\begin{split}
\ra_*^{-n-k}F^n_* & \bpi_*^{-1} \bpi_* F^k_* \eta (\vf) = \ra_*^{-n-k}
\bpi_*^{-1} \bpi_* F_*^k \eta (\vf \circ F^n \cdot 1_{\hDelta^n})
= \ra_*^{-n-k} \bpi_* F_*^k \eta ( \overline{\vf}_n
          \cdot 1_{\bDelta^n}) \\
& = \ra_*^{-n-k} \barF_*^k
\bareta ( \overline{\vf}_n \cdot 1_{\bDelta^n})
= \ra_*^{-n-k} \int_{\bDelta^n}
\overline{\vf}_n \cdot \bLp^k \brho  \, d\bm  .
\end{split}
\]
Recalling that $\brho \in \B$ and $d(\brho)>0$
since $\eta \in \tG$, we estimate
\begin{equation}
\begin{split}
 & | \ra_*^{-n-k_1} F_*^n \bpi_*^{-1} \bpi_* F_*^{k_1}\eta(\vf)
- \ra_*^{-n-k_2} F_*^n \bpi_*^{-1} \bpi_* F_*^{k_2}\eta(\vf) |  \\
& \leq   |\vf|_\infty \ra_*^{-n}
  \int_{\bDelta^n}
        \left| \ra_*^{-k_1} \bLp^{k_1} \brho - d(\brho) h_* \right| d\bm
        + |\vf|_\infty \ra_*^{-n}
  \int_{\bDelta^n}
        \left| \ra_*^{-k_2} \bLp^{k_2} \brho - d(\brho) h_* \right| d\bm .
\end{split}
\label{eq:last term}
\end{equation}
Both terms of \eqref{eq:last term} are small: By Theorem~\ref{thm:exp conv}(1),
\[
\ra_*^{-n}  \int_{\bDelta^n}
        \left|\ra_*^{-k} \bLp^k \brho - d(\brho) h_* \right| d\bm
\leq
\ra_*^{-n}
        \left\| \ra_*^{-k} \bLp^k \brho - d(\brho) h_* \right\|
    \int_{\bDelta^n} 1_\beta \, d\bm
 \leq \ra_*^{-n} |\bLp^n 1_\beta|_1
         D \| \brho \| \tau^k
\]
for $k=k_i$,
where $1_\beta(x) = \beta^{-\ell}$ for $x \in \hDelta_\ell$.
Since $1_\beta \in \B$, $\ra_*^{-n} |\bLp^n 1_\beta|_1$
converges to $d(1_\beta)$ as $n\to \infty$.
Thus \eqref{eq:last term} can be made
arbitrarily small by choosing $k_1$ and $k_2$ sufficiently large.

We have shown that $\ra_*^{-n} F^n_*\eta(\vf)/d(\brho)$ is
a Cauchy sequence and therefore
converges to a number $Q(\vf)$.
The functional $Q(\vf) := \lim_{n \to \infty} F_*^n\eta(\vf)/d(\brho)$ is clearly linear
in $\vf$, positive and satisfies
$Q(1) = 1$.  Also $|Q(\vf)| \leq |\vf|_\infty Q(1)$ so that $Q$ extends to a bounded linear functional
on $C_b^0(\hDelta)$, the set of bounded functions which are continuous on each $\hdlj$.

By the Riesz representation theorem, there exists a unique Borel probability measure
$\mu_*$ satisfying $\mu_*(\vf) = Q(\vf)$ for each $\vf \in C_b^0(\hDelta)$
\cite[Section 56]{halmos}.
Also,
\[
d(\brho) \mu_*(\vf \circ \F) = \lim_{n\to \infty} \ra_*^{-n} F_*^n \eta (\vf \circ \F)
= \ra_* \lim_{n\to \infty} \ra_*^{-n-1} F_*^{n+1} \eta(\vf)
= \ra_* d(\brho) \mu_*(\vf)
\]
so that $\mu_*$ is a conditionally
invariant measure for $\F$ with escape rate $-\log \ra_*$.
\end{proof}

This completes the proof of parts (a) and (b) of Theorem \ref{thm:tower}.
To prove part (c), we must show that $\mu_*$ has absolutely continuous
conditional measures on unstable leaves.
Proof of a stronger version of this fact is contained in the
proof of Proposition~\ref{prop:uniform bounds}(i) below.

Notice that from the proof of Lemma~\ref{lemma:unique}, we have
$\bpi_*\mu_* = \bmu_*$
so that the density of $\bpi_*\mu_*$ is precisely $h_*$.  Since $h_* \in \B$
and $d(h_*) =1>0$, we conclude $\mu_* \in \tG$.


\subsection{Proof of Proposition~\ref{prop:uniform bounds}}

Consider the set of holes $\Sigma_h(q_0)$ for fixed $q_0$ where $h$ is small
enough as required in Proposition~\ref{prop:horseshoe}.
For $\sigma \in \Sigma_h(q_0)$, let $\Delta^{(\sigma)}$ be the tower with holes
induced by the generalized horseshoe, and
let $\mu_*^{(\sigma)}$ be the conditionally invariant measure given by
Theorem \ref{thm:tower}.

\bigskip
\noindent {\it Proof of (i).}
We drop the superscript $(\sigma)$ in what follows and point out that the
constants we use are uniform for all $\sigma \in \Sigma_h(q_0)$ and $h$ sufficiently
small.

Choose $\gamma_0 \in \Gamma^u(\Delta_0)$ and let $\eta_0$ be the measure supported on $\gamma_0$ with uniform density with respect to $\mu_\gamma$.
We claim that $\eta_0 \in \tG$.  It is immediate that $\bpi_* \eta_0$ has
density $\brho = e^{-u}|_{\gamma_0}$ with respect to $\bm$, which is in $\B$ by
Lemma~\ref{lemma:jacobian}.  To see that $d(\brho) > 0$, notice that
the mixing assumption on $(\barF, \bDelta)$ implies that $\bDelta_0$ is
necessarily a surviving partition element.  Since $\brho > 0$ on $\bDelta_0$,
Corollary~\ref{cor:positive} implies that $d(\brho)>0$.

By Theorem~\ref{thm:tower}(b),
$\eta^{(n)} := F^n_*\eta_0/|F^n_*\eta_0|$ converges to $\mu_*$.
Let $\rho^{(n)}_\gamma$ denote the density of $\eta^{(n)}$  with respect
to $\mu_\gamma$ on
$\gamma \in \Gamma^u(\Delta)$.  Notice that inverse branches of $\F^n$ on
$\gamma$ are well defined. For any $x_1, x_2 \in \gamma$, treating one
branch at a time and summing of all branches, we obtain that
\[
\frac{\rho_\gamma^{(n)}(x_1)}{\rho_\gamma^{(n)}(x_2)}
= \frac{\sum_{y_1 \in \F^{-n}x_1} (J_{\mu_\gamma}\F^n(y_1))^{-1}}
       {\sum_{y_2 \in \F^{-n}x_2} (J_{\mu_\gamma}\F^n(y_2))^{-1}}
\leq \sup_{y_1 \in \F^{-n}x_1} \frac{J_{\mu_\gamma}\F^n(y_2)}
    {J_{\mu_\gamma}\F^n(y_1)}
\leq e^C
\]
by Property {\bf (P4)(b)} where $J_{\mu_\gamma} \F^n$ is the Jacobian of $\F^n$
with respect to $\mu_\gamma$.
Since by Proposition~\ref{prop:horseshoe} the constant $C$ is independent of $\sigma$, $x$ and $n$, we have
\begin{equation}
\label{eq:uniform ratio}
e^{-C} \leq \frac{\sup_{x\in \gamma} \rho_\gamma^{(n)}(x)}
{\inf_{x \in \gamma} \rho_\gamma^{(n)}(x)} \leq e^C .
\end{equation}
This estimate plus the minimum length $\kappa$ of $\mu^u(\Lambda)$ given
by Proposition~\ref{prop:horseshoe} yields the desired uniform upper and lower
bounds on the conditional densities of $\eta^{(n)}$ with respect to $\mu_\gamma$
(and hence to $m_\gamma$) on $\Delta_0$.
The uniformity of these bounds in
$n$ implies that they pass to the conditional densities of $\mu_*$
in the limit as $n \to \infty$.
Since $\mu_*$ is conditionally invariant,
$\mu_*|_{\hDelta_\ell} = \ra_*^{-1} \mu_*|_{\F^{-1}\hDelta_\ell}$.  The required
bounds on the densities extend easily to $\hDelta_\ell$ for $\ell >0$.

\bigskip
\noindent {\it Proof of (ii).}
We decompose $\mu_*$ into a normalized factor measure $\mu^s_*$ on
$\Gamma^u(\Delta_\ell)$
and densities $\rho_\gamma$ with respect to $m_\gamma$ on $\gamma \in \Gamma^u(\Delta_\ell)$.  Then
\[
\begin{split}
\mu_*(\cup_{\ell \geq L} \Delta^{(\sigma)}_\ell)
& = \sum_{\ell \geq L} \int_{\Gamma^u(\Delta_\ell^{(\sigma)})} d\mu^s_*
\int_\gamma \rho_\gamma dm_\gamma
\; \leq \; \sum_{\ell \geq L} C_2 \ra_*^{-\ell}  \bm(\bDelta_\ell)
\; \leq \; C_2  \sum_{\ell \geq L} C_0 \theta_0^\ell \beta^{-\ell}\ .
\end{split}
\]
Here we have used Proposition~\ref{prop:uniform bounds}(i) to estimate
$\rho_\gamma$,  Proposition~\ref{prop:horseshoe}
and Lemma~\ref{lemma:jacobian} for the
uniformity of $C_0$ and $\theta_0$, and the fact that $\ra_* > \beta$.
The sum can be made arbitrarily small since $\beta > \theta_0$.

\bigskip
\noindent {\it Proof of (iii).}
Notice that $n(\Delta, \tH) \geq \bar{n}(h)$ by definition of $\bar{n}(h)$
in Sect. 2.2.
From Theorem~\ref{thm:exp conv}(2),
we know that the escape rate $-\log \ra_*$ satisfies
\[
\ra_* > 1 - \frac{1+C_1}{\kappa} \sum_{\ell \geq 1} \beta^{\ell -1}
\bm(\overline H \cap \bDelta_\ell)
> 1 - \frac{1+C_1}{\kappa} \sum_{\ell \geq \bar{n}(h)}
\beta^{\ell-1} C_0 \theta_0^\ell  .
\]
By Proposition~\ref{prop:horseshoe}, $\bar{n}(h) \to \infty$ as $h \to 0$,
so that $\ra_* \to 1$.


\section{Proofs of Theorems for Billiards}
\label{billiard proofs}

In the Proofs of Theorems 1--3, we fix a hole $\sigma$ that is acceptable with respect
to  Proposition~\ref{prop:horseshoe} and for which
$n(\Lambda^{(\sigma)}, R^{(\sigma)}, H_\sigma)$ is large enough to meet
the condition in Theorem \ref{thm:tower}. We suppress mention of $\sigma$,
and let $(F, \Delta, \tilde H)$ be the tower constructed from
$(\Lambda, R, H)$. Define
 $M^n = \cap_{i=0}^n f^{-n} \hM$, $M^\infty = \cap_{n \ge 0} M^n$.

\subsection{Proof of Theorems~\ref{thm:escape rate} and \ref{thm:accim}}
\label{proof of accim}

The first order of business is to show that each $\eta \in \G$ can be lifted to
a measure $\teta \in \tG$ in such a way that the escape dynamics on $\hDelta$
with initial distribution $\teta$ reflect those on $\hM$ with initial distribution $\eta$.
Recall that the natural invariant probability measure for the closed billiard system
$f: M \to M$ is denoted by $\nu$. In [Y, Sect.~2], it is shown that there is a unique
invariant probability measure $\tnu$ for the tower map $F:\Delta \to \Delta$ with absolutely continuous conditional
measures on unstable leaves, and this measure has the property $\pi_*\tnu = \nu$.
Given $\eta \in \G$, we define $\teta$ on $\Delta$ as follows:
By definition, every $\eta \in \G$ is absolutely continuous with respect to $\nu$.
Let $\psi =  \frac{d\eta}{d\nu}$. We take $\teta$ to be the measure
given by $d\teta = \tilde \psi d\tnu$ where $\tilde \psi = \psi \circ \pi$.
This implies in particular that $\pi_* \teta = \eta$.

\begin{lemma}
\label{lemma:G}
If $\eta \in \G$, then $\teta \in \tG$.
\end{lemma}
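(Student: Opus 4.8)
The plan is to check the two defining properties of $\tG$: (i) $\teta$ has absolutely continuous conditional measures on unstable leaves, and (ii) $\bpi_*\teta = \brho\, d\bm$ for some $\brho \in \B$ with $d(\brho) > 0$. Property (i) is immediate: $\tnu$ has absolutely continuous conditional measures on unstable leaves (Sect.~4.1), and $\teta = \tpsi\, d\tnu$ with $\tpsi = \psi \circ \pi$ bounded and Borel, so the conditional measures of $\teta$ on $\gamma \in \Gamma^u(\Delta)$ are just those of $\tnu$ multiplied by $\tpsi|_\gamma$ and renormalized --- still absolutely continuous with respect to $m_\gamma$. For (ii), write $\bpi_*\tnu = h\, d\bm$ where $h$ is the invariant density of the closed quotient tower $\barF$; by [Y] this $h$ exists, is bounded and bounded away from $0$, and is log-Lipschitz on each $\bDelta_{\ell,j}$ in the symbolic metric $d_\beta$, so $h \in \mathcal{B}$. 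Disintegrating $\tnu$ over $\bpi$ produces probability conditionals $\nu^{\bar x}$ on the stable leaves $\gamma^s(\bar x) = \bpi^{-1}(\bar x)$, and since $\tpsi$ is bounded one gets directly
\[
\bpi_*\teta = \brho\, d\bm, \qquad \brho(\bar x) = h(\bar x)\, \Psi(\bar x), \qquad \Psi(\bar x) := \int_{\gamma^s(\bar x)} \psi \circ \pi \; d\nu^{\bar x}.
\]
Thus, modulo the positivity statement $d(\brho) > 0$, it suffices to show $\Psi$ is bounded with $\Psi|_{\bDelta_{\ell,j}}$ uniformly Lipschitz in $d_\beta$; then $\brho = h \Psi$, being a product of two elements of $\mathcal{B}$, lies in $\mathcal{B}$, and its restriction to $\bDelta^0$ lies in $\B$.

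Boundedness of $\Psi$ is trivial ($|\Psi| \le \|\psi\|_\infty$), and the Lipschitz estimate --- which I expect to be the main obstacle --- is exactly the stable-direction control that separates Theorem~\ref{thm:exp conv} from Theorem~\ref{thm:tower}. Fix $\bar x, \bar y$ in a single $\bDelta_{\ell,j}$ with $s(\bar x, \bar y) = n$ and let $\Theta : \gamma^s(\bar x) \to \gamma^s(\bar y)$ be the holonomy along unstable leaves. Decompose
\[
\Psi(\bar x) - \Psi(\bar y) = \int_{\gamma^s(\bar x)} \big( \psi \circ \pi - \psi \circ \pi \circ \Theta \big)\, d\nu^{\bar x} \;+\; \Big( \int \psi \circ \pi \, d(\Theta_* \nu^{\bar x}) - \int \psi \circ \pi \, d\nu^{\bar y} \Big).
\]
For the first term: given $z \in \gamma^s(\bar x)$, the points $z$ and $\Theta z$ lie on a common unstable leaf of $\Delta$ and have tower separation time equal to $s(\bar x,\bar y)$ (the partition $\{\dlj\}$ consists of unions of stable leaves), so their $\pi$-images lie on a common unstable curve in $M$ that has not yet separated after $n$ steps; \textbf{(P4)(a)} (with $d \le p^{1/2}$) then gives $d(\pi z, \pi \Theta z) \le C \alpha^{n/2}$, hence $|\psi \circ \pi(z) - \psi \circ \pi(\Theta z)| \le C\,\mathrm{Lip}(\psi)\, \alpha^{n/2}$ and the first term is $O(\alpha^{n/2})$. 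For the second term I would invoke the bounded-distortion estimates for the $m_\gamma$-densities of $\tnu$ --- a consequence of \textbf{(P4)(b)}, \textbf{(P5)} and Lemma~\ref{lemma:jacobian} --- to get $\| \Theta_* \nu^{\bar x} - \nu^{\bar y} \| \le C \alpha^{n/2}$, so this term is $O(\alpha^{n/2})$ too. Since $\beta > \sqrt{\alpha}$, altogether $|\Psi(\bar x) - \Psi(\bar y)| \le C' (\|\psi\|_\infty + \mathrm{Lip}(\psi))\, \beta^{s(\bar x, \bar y)}$ with $C'$ independent of $\ell, j$, so $\|\Psi\|_{\lip} < \infty$ and $\brho \in \B$.

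It remains to see $d(\brho) > 0$, which I would deduce from Corollary~\ref{cor:positive}. Here $\brho = h \Psi \ge 0$ with $h > 0$ everywhere; and because $\pi^{-1}(M^\infty)$ equals $\Delta^\infty := \bigcap_{n \ge 0} \Delta^n$, which is $\bpi$-saturated ($\tilde H$ is a union of elements $\dlj$, so survival is constant along stable leaves), for every $\bar x \in \bpi(\Delta^\infty)$ the stable curve $\pi(\gamma^s(\bar x))$ lies entirely in $M^\infty$, where $\psi > 0$; hence $\Psi > 0$ and so $\brho > 0$ on $\bpi(\Delta^\infty)$. Using the mixing of the surviving dynamics (Proposition~\ref{prop:horseshoe}(a)(ii)) one locates a surviving partition element $\bDelta_{\ell,j}$ on which $\brho$ is strictly positive, and Corollary~\ref{cor:positive} then gives $d(\brho) > 0$. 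The delicate point here is matching the exact hypothesis of Corollary~\ref{cor:positive} --- strict positivity of $\brho$ on an \emph{entire} surviving $\bDelta_{\ell,j}$, not merely on $\bpi(\Delta^\infty)$ --- which is where one uses that $\{\psi > 0\}$ is an open neighborhood of the compact set $M^\infty$ (by continuity of $\psi$) together with the smallness of the hole built into the construction of $(\Lambda^{(\sigma)}, R^{(\sigma)})$.
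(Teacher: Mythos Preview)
Your argument for part (i) is correct but takes a longer route than the paper's. The paper does not factor $\brho = h\Psi$; instead it computes directly the conditional density of $\teta$ on each unstable leaf $\gamma \in \Gamma^u(\Delta)$ as the product $\rho_\gamma = \tpsi \cdot \frac{d\tnu}{d\mu_\gamma} \cdot \frac{d\mu_\gamma}{dm_\gamma}$, observes (via the auxiliary Lemma~\ref{lemma:lip}, which is essentially your ``first term'' estimate) that each of the three factors is bounded and in $\mathrm{Lip}^u(\Delta)$, and then notes that $\brho(\bar x) = \int_{\gamma^s(\bar x)} \rho_\gamma \, d\teta^s$ is a transverse average which trivially inherits the uniform $\mathrm{Lip}^u$ bound. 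Your route through the stable-leaf conditionals $\nu^{\bar x}$ and the holonomy estimate $\|\Theta_*\nu^{\bar x} - \nu^{\bar y}\| = O(\alpha^{n/2})$ is valid, but that estimate is extra work the paper avoids entirely by staying on the unstable side.

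In part (ii) there is a genuine gap. You correctly observe that $\brho > 0$ only on $\bDelta^\infty = \bpi(\Delta^\infty)$, and you correctly flag that Corollary~\ref{cor:positive} needs strict positivity on an \emph{entire} surviving element $\bDelta_{\ell,j}$. But your proposed fix --- openness of $\{\psi > 0\}$ in $M$ together with smallness of the hole --- does not deliver this: openness in $M$ does not produce a full partition element of $\bDelta$ on which $\brho > 0$, and the hole size is fixed in this lemma while $\psi$ is arbitrary in $\G$. The paper closes the gap differently and more simply, using only what you have already proved. Since $\brho \in \B$ is $d_\beta$-continuous and $\brho(\bar x) > 0$ at some $\bar x \in \bDelta^\infty$, one has $\brho > 0$ on a depth-$k$ cylinder $E_k$ about $\bar x$. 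Because $\bar x$ never enters $\bH$ and the hole is a union of partition elements, $E_k$ is a surviving cylinder and $\barF^k(E_k)$ is an entire element $\bDelta_{\ell',j'}$ (which one may take to be surviving by choosing $k$ at a return of $\bar x$ to $\bDelta_0$). Now $\bLp^k\brho > 0$ on $\bDelta_{\ell',j'}$, Corollary~\ref{cor:positive} gives $d(\bLp^k\brho) > 0$, and the identity $d(\bLp^k\brho) = \ra_*^k\, d(\brho)$ finishes.
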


As before, let $\mathcal{F}_b$ denote the set of bounded functions on $\Delta$.
For $\vf \in \mathcal{F}_b$ and $\gamma \in \Gamma^u(\Delta)$, we
let Lip$^u(\vf|_\gamma)$ be the Lipschitz constant of $\vf|_\gamma$
with respect to the $d_\beta$-metric (notice that $d_\beta$, the symbolic metric
defined on $\bDelta$, can be thought of as a metric on unstable leaves).
Let
$$|\vf|^u_{\lip} = \sup_{\gamma \in \Gamma^u(\Delta)} {\rm Lip}^u(\vf|_\gamma)\ ,
$$
and Lip$^u(\Delta) = \{ \vf \in \mathcal{F}_b : |\vf|^u_{\lip} < \infty\}$.
The first step toward proving Lemma \ref{lemma:G} is

\begin{lemma}
\label{lemma:lip} Let $\vf: M \to {\mathbb R}$ be Lipschitz. Then
$\tphi := \vf \circ \pi \in \mbox{Lip}^u(\Delta)$ with
$|\tphi|^u_{\lip} \leq C {\rm Lip}(\vf)$.
\end{lemma}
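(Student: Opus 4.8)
The statement asserts that the projection $\pi:\Delta\to M$ is Lipschitz along unstable leaves with respect to the symbolic metric $d_\beta(x,y)=\beta^{s(x,y)}$, once composed with a Lipschitz function on $M$. So the plan is: (1) reduce the claim to an estimate on $p(\pi x,\pi y)$ for $x,y$ on a common unstable leaf of $\Delta$; (2) express such $x,y$ in base coordinates and apply property {\bf (P4)(a)} of the horseshoe; (3) convert between the $p$-metric and the symbolic metric using $\beta>\sqrt\alpha$.

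\textbf{Step 1.} Since $M$ is compact, $\tphi=\vf\circ\pi$ is bounded and measurable, so $\tphi\in\mathcal F_b$ and we only need to bound $|\tphi(x)-\tphi(y)|/\beta^{s(x,y)}$ for $x,y$ in a single $\gamma\in\Gamma^u(\Delta)$. As $\vf$ is Lipschitz for the Euclidean metric $d$ on $M$ and $d(\cdot,\cdot)\le p(\cdot,\cdot)^{1/2}$ (Sect.~2.3(ii)), we have $|\tphi(x)-\tphi(y)|\le {\rm Lip}(\vf)\,p(\pi x,\pi y)^{1/2}$, so it suffices to show $p(\pi x,\pi y)\le C\,\alpha^{\,s(x,y)}$.

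\textbf{Step 2.} Write $x,y\in\gamma\subset\dlj$ as $x=(x_0,\ell)$, $y=(y_0,\ell)$ with $x_0,y_0$ on a common $\gamma^u\in\Gamma^u(\Lambda)$; since $\pi$ is injective on $\Delta_\ell$ and $\pi\circ F=f\circ\pi$, we have $\pi x=f^\ell x_0$ and $\pi y=f^\ell y_0$. The tower separation time, read off at level $\ell$, equals $s_0(x_0,y_0)-\ell$, and by construction of the Markov partition $\{\dlj\}$ from the increasing partitions $\tilde{\mathcal P}_m$ one has $s(x,y)\le s_0(x_0,y_0)-\ell$ (this is the inequality $s\le s_0$ recorded in Sect.~4.1). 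In particular $\ell<s_0(x_0,y_0)$, so {\bf (P4)(a)} applies at time $n=\ell$ to the pair $x_0,y_0$ and gives $p(\pi x,\pi y)=p(f^\ell x_0,f^\ell y_0)\le C\,\alpha^{\,s_0(x_0,y_0)-\ell}\le C\,\alpha^{\,s(x,y)}$, as needed for Step~1.

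\textbf{Step 3.} Combining, $|\tphi(x)-\tphi(y)|\le C^{1/2}\,{\rm Lip}(\vf)\,\alpha^{\,s(x,y)/2}$; since $\beta>\sqrt\alpha$ and $s(x,y)\ge 1$, $\alpha^{\,s(x,y)/2}\le\beta^{\,s(x,y)}=d_\beta(x,y)$, and with the usual normalization $C\ge 1$ this yields $|\tphi|^u_{\lip}\le C\,{\rm Lip}(\vf)$.

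\textbf{Main obstacle.} Everything is a direct substitution except the compatibility $s(x,y)\le s_0(x_0,y_0)-\ell$: one must track the level offset $\ell$ and the resets of $s_0$ that occur each time the orbit returns to the base (using $s_0(f^{R}\cdot,f^{R}\cdot)=s_0(\cdot,\cdot)-R$ on not-yet-separated pairs) to see that the combinatorial separation time of $F$ never exceeds the geometric separation time to which {\bf (P4)(a)} applies. This is precisely the feature built into the tower construction, and together with the choice $\beta>\sqrt\alpha$ — which makes the $\tfrac12$-power loss from $d\le p^{1/2}$ harmless — it is the only non-mechanical point in the proof.
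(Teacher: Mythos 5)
Your proof is correct and follows essentially the same route as the paper's: both reduce to the chain $|\tphi(x)-\tphi(y)|\le \mathrm{Lip}(\vf)\,p(\pi x,\pi y)^{1/2}$, bound $p(\pi x,\pi y)$ via \textbf{(P4)(a)} and the compatibility of separation times, and then use $\beta>\sqrt\alpha$ to absorb the square root into $d_\beta$. Your version is somewhat more explicit than the paper's three-line argument in that it tracks the level offset $\ell$ and applies \textbf{(P4)(a)} at time $n=\ell$ to the base points $x_0,y_0$ rather than invoking the $s_0$-estimate directly for $\pi x,\pi y$ — but since $s_0(\pi x,\pi y)=s_0(x_0,y_0)-\ell$, these are the same estimate, and the extra bookkeeping just makes the dependence on $s\le s_0$ visible; there is no substantive difference.
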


\begin{proof} Recall that for $x, y \in M$ lying in a piece of
local unstable manifold, we have $d(x,y) \leq p(x,y)^{1/2}$ where
$p(\cdot, \cdot)$ is the $p$-metric (see Sect.~2.3). Now for
$\gamma \in \Gamma^u(\Delta)$ and $x,y \in \gamma$, we have
$$
|\tphi(x)-\tphi(y)| =  |\vf(\pi x) - \vf(\pi y)| \leq {\rm Lip}(\vf) d(\pi x, \pi y)
\le {\rm Lip}(\vf) p(\pi x, \pi y)^{\frac12}\ .
$$
By {\bf (P4)(a)}, $p(\pi x, \pi y)^{\frac12} \le C \alpha^{s(\pi x,\pi y)/2}$, which is
$ \le C d_\beta(x,y)$ since $s \leq s_0$ and $\beta \geq \sqrt{\alpha}$.
\end{proof}

\begin{proof}[Proof of Lemma \ref{lemma:G}]

\medskip
(i)  First we show $\bpi_* \teta = \brho \, \bm$ with $\brho \in \B$.
Let $\psi = \frac{d\eta}{d\nu}$. Then disintegrating $\teta$ into
$\teta^s$ and $\{\rho_\gamma dm_\gamma, \gamma \in \Gamma^u(\Delta)\}$,
we obtain
$$
\rho_\gamma := \tpsi \cdot \frac{d\tnu}{d\mu_\gamma} \cdot
\frac{d\mu_\gamma}{dm_\gamma} \ .
$$
Now $\tpsi$ is bounded by assumption and is $\in  \mbox{Lip}^u(\Delta)$
by Lemma~\ref{lemma:lip}, $\frac{d\tnu}{d\mu_\gamma}$ is
bounded and is $\in  \mbox{Lip}^u(\Delta)$ ([Y], Sect.~2), as is
$\frac{d\mu_\gamma}{dm_\gamma}$
(Lemma~\ref{lemma:jacobian}). Thus we conclude
that $\rho_\gamma \in \mbox{Lip}^u(\Delta)$ and is bounded.
Recall that $\brho(x) = \int_{\gamma^s(x)} \rho_\gamma \, d\teta^s$.
It follows immediately that $|\brho|_\infty \le \sup_\gamma |\rho_\gamma|_\infty$
and Lip$(\brho) \le \sup_\gamma {\rm Lip}^u(\rho_\gamma)$.

\medskip \noindent
(ii) It remains to show $d(\brho) > 0$.
By definition of $\G$, $\psi >0$ on $M^\infty$, the set of points
which never escape from $\hM$, so
$\tpsi >0$ on $\Delta^\infty$.  The fact that
$d\tnu/d\mu_\gamma$ and $e^{-u}$ are strictly positive implies that $\brho > 0$ on
$\Delta^\infty$; hence it is $>0$ on a surviving cylinder set, i.e. a set
$E_k$ such that $\barF^k$ maps $E_k$ onto a surviving $\bDelta_{\ell,j}$
before any part
of it enters the hole. By Corollary~\ref{cor:positive}, $d(\bLp^k\brho) >0$.
Since $\int_{\bDelta^n} g \, d\bm = \int_{\bDelta} \bLp^n g \, d\bm$ for
each $n \geq 0$ and $g \in L^1(\bm)$, we have $d(\bLp^k\brho) = \ra_*^kd(\brho)$ so
that $d(\brho)>0$ as well.
\end{proof}

\begin{proof}[Proof of Theorems 1 and 2.]
Given $\eta \in \G$, let $\teta$ be as defined earlier. Then $\teta \in \tG$
by Lemma \ref{lemma:G}.
For $\vf \in C^0(M)$, let $\tphi = \vf \circ \pi$.
Then $\tphi \in C^0_b(\Delta)$ and
for $n \geq 0$ we have,
\begin{equation}
\label{eq:push}
\hf_*^n \eta(\vf) = \eta(\vf \circ f^n \cdot 1_{M^n})
= \teta(\tilde \vf \circ F^n \cdot 1_{\Delta^n})
= \F_*^n\teta(\tphi) .
\end{equation}
Setting $\vf \equiv 1$ in \eqref{eq:push}, we have
$\eta(M^n) = \hf^n_*\eta(\hM) = \F^n_*\teta (\hDelta) = \teta(\Delta^n)$ for
$n>0$, so
\[
\lim_{n\to \infty} \frac{1}{n} \log \eta(M^n)
= \lim_{n\to \infty} \frac{1}{n} \log \teta(\Delta^n) = \log \ra_*
\]
by Theorem~\ref{thm:tower}(a). This proves Theorem~\ref{thm:escape rate}.

Let $\mu_* = \pi_* \tmu_*$ where $\tmu_*$ is given by Theorem~\ref{thm:tower}.
Then $\mu_*(\vf) = \tmu_*(\tilde \vf)$, and
$$
\hf_*\mu_*(\vf) = \F_*\tmu_*(\tphi) = \ra_* \tmu_*(\tphi) = \ra_* \mu_*(\vf),
$$
proving $\mu_*$ is conditionally invariant.
Using \eqref{eq:push} again, the fact that the normalizations are equal, and
Theorem~\ref{thm:tower}(b), we obtain
\[
\lim_{n\to \infty} \frac{\hf_*^n \eta(\vf)}{\hf_*^n \eta(\hM)}
= \lim_{n \to \infty} \frac{\F_*^n\teta(\tphi)}{\F_*^n\eta(\hDelta)}
= \tmu_*(\tphi) = \mu_*(\vf)\ .
\]
Thus $\hf_*^n \eta/\eta(M^n) \to \mu_*$ weakly.
Finally,
\[
\lim_{n\to \infty} \ra_*^{-n} \hf_*^n \eta(\vf) = \lim_{n\to \infty} \ra_*^{-n}
\F_*^n \teta(\tphi)
= d(\brho) \cdot \tmu_*(\tphi) = d(\brho) \cdot \mu_*(\vf) ,
\]
where $d(\brho)>0$ since $\teta \in \tG$. This completes the proof of Theorem 2.
\end{proof}

\begin{remark}
\label{rem:generalize} {\rm
In the proof of Lemma~\ref{lemma:G}, step (i) holds
for any $\eta$ that has Lipschitz densities on unstable leaves.  Thus for
this class of measures, Theorem~\ref{thm:accim}(b) holds (with $c(\eta)$ possibly
equal to zero).  It is also clear from step (ii) that to show $d(\brho)>0$,
it suffices to assume $\psi >0$ on $M^\infty \cap \Lambda$, or on
$M^\infty \cap \pi(\dlj)$ where $\bpi(\dlj)$ is any surviving element.}
\end{remark}


\subsection{Proof of Theorem~\ref{thm:geometry}}

Let $\mu_* = \pi_* \tmu_*$ be as above.

\medskip
\noindent (a) Since $\hf_* \mu_* = \ra_* \mu_*$, it follows that  $\mu_*$ is supported on
$M \setminus \cup_{n \ge 0} f^n(H)$ where $H=H_\sigma$.
This set has Lebesgue measure zero since
by the ergodicity of $f$, $\cup_{n \ge 0} f^n(H)$ has full Lebesgue measure.
Thus $\mu_*$ is singular with respect to Lebesgue measure.

\medskip
\noindent
(b) First, we argue that $\mu_*$ has absolutely continuous conditional measures
on unstable leaves (without claiming that the densities are strictly positive).
This is true because for each $\ell,j$, $\tilde \mu_*|_{\dlj}$ has absolutely continuous conditional measures on $\gamma \in \Gamma^u(\dlj)$, and
$\pi|_{\dlj}$, which is one-to-one, identifies each $\gamma$
with a positive Lebesgue measure subset of a local unstable manifold of $f$.

The rest of the proof is concerned with showing that the conditional densities
of $\mu_*$ are strictly positive. To do that, it is not productive to view $\mu_*$
as $\pi_*\tmu_*$. Instead, we will view $\mu_*$ as the weak limit of
$\nu^{(n)}:= \hf^n_*\nu/|\hf^n_*\nu|$ as $n \to \infty$ where
$\nu$ is the natural invariant measure for $f$. This convergence of $\nu^{(n)}$ is
guaranteed by Theorem 2.
We will prove that $\mu_*$ has the properties immediately following the
statement of Theorem 3 in Sect.~1.3.

Step 1: Our first patch is built on $V=\cup \{\gamma^u: \gamma^u \in
\Gamma^u(\Lambda)\}$ where $\Gamma^u=\Gamma^u(\Lambda)$ is the
defining family of unstable curves for $\Lambda$. To understand the
geometric properties  of $\nu^{(n)}|_V$, observe that in backward
time, each $\gamma^u \in \Gamma^u$ either falls into the hole
completely or stays out completely. This is because $f(\partial H)$
is regarded as part of the discontinuity set for $f^{-1}$ when we
constructed the horseshoe $\Lambda$ (see Sect.~3.2). Thus there is a
decreasing sequence of sets $U_n = \cup\{ \gamma^u \in \Gamma^u :
f^{-i}\gamma^u \cap H = \emptyset \mbox{ for all } 0 \leq i \leq n
\} \subset V$ consisting of whole $\gamma^u$-curves. Assuming
$\nu(U_n) >0$ for now, we have $\nu^{(n)}|_V = c_n \nu|_{U_n}$ for some
constant $c_n>0$ as $\nu$ is $f$-invariant. Let $\zeta$ be a limit
point of $\nu^{(n)}|_V$, {\it i.e.}, $\zeta = \lim_{n_k}
\nu^{(n_k)}|_V$. Assuming $\zeta(V)>0$, lower bounds for conditional probability
densities of $\nu^{(n_k)}|_V$, equivalently those of $\nu|_{U_n}$, are passed
to $\zeta$, and these bounds are strictly positive.

To see that $\zeta(V)>0$, recall that $\nu = \pi_*\tnu$ for some
$\tnu$ on the tower $\Delta$, so that
$\nu^{(n)} = \pi_* \tnu^{(n)}$
where $\tnu^{(n)} = \F^n_*\tnu/|\F^n_*\tnu|$. Since $\pi(\Delta_0)
\subset V$, we have
$$
\zeta = \lim_{n_k} \nu^{(n_k)}|_V \geq
\lim_{n_k} \pi_*( \tnu^{(n_k)} |_{\Delta_0})
= \pi_*(\tilde \mu_*|_{\Delta_0})\ .
$$
We have written an inequality (as opposed to equality) above because parts of
$\Delta_\ell$ for $\ell \ge 1$ may get mapped into $V$ as well.
Clearly, $\tmu_*(\Delta_0) >0$, thereby ensuring $\zeta(V) >0$, hence
$\nu(U_n) >0$ and
the strictly positive conditional densities property above.
This together with $\zeta \le \mu_*|_V$ (equality
is not claimed because it is possible for part of $\nu^{(n_k)}$ from
outside of $V$ to leak into $V$ in the limit) proves that
$(V,\zeta)$ is an acceptable patch.

Step 2: Next we use $(V,\zeta)$ to build patches $(V_{\ell,j}, \zeta_{\ell,j})$
corresponding to partition elements $\dlj$ of the tower $\Delta$ with $\ell>0$
and $\tmu_*(\dlj) >0$. From Sections~3 and 4, we know that $\pi(\dlj)$ is a hyperbolic product set, and $\pi(\dlj) = f^\ell(\Lambda^s)$ for some $s$-subset
$\Lambda^s \subset \Lambda$. Moreover, $f^i(\Lambda^s) \cap H =
\emptyset$ for all $0 < i \le \ell$. Thus we may assume
$V_{\ell,j} = \cup \{\gamma^u : \gamma^u \in \Gamma^u(\pi(\dlj))\}
\subset f^\ell(V)$. Let $\zeta_{\ell,j} = \ra_*^{-\ell}(f^\ell_*\zeta)|_{V_{\ell,j}}$.
Then $\zeta_{\ell,j}$ has strictly positive conditional densities on unstable curves
because $\zeta$ does,
and $\zeta_{\ell,j} \le \mu_*|_{V_{\ell,j}}$ as
$\mu_*$ satisfies $\hf_*\mu_* = \ra_* \mu_*$.

Finally, since $\zeta_{\ell,j} \ge \pi_*(\tilde \mu_*|_{\dlj})$ for each $\ell,j$,
it follows that $\sum_{\ell,j} \zeta_{\ell,j} \ge \mu_*$, completing the
proof of Theorem 3.


\subsection{Proof of  Theorem~\ref{thm:small hole limit}}

Suppose $h_n$ is a sequence of numbers tending
to $0$, $\sigma_{h_n} \in \Sigma_{h_n}(q_0)$ is a sequence
of holes in the billiard table, and $H_n = H_{\sigma_{h_n}}$ the corresponding
holes in $M$. For each $n$, let $\ra_n$ be the escape rate and
$\mu_n$ the physical measure for the leaky system $(f,M,H_n)$
given by Theorem~\ref{thm:accim}. By Proposition~\ref{prop:uniform bounds}(iii),
we have
$\ra_n \to 1$ as $n\to \infty$. To prove $\mu_n \to \nu$, we will
assume, having passed to a subsequence, that $\mu_n$ converges weakly
to some $\mu_\infty$, and show that
(i) $\mu_\infty$ is $f$-invariant, and (ii) it has absolutely continuous
conditional measures on unstable leaves. These two properties together
uniquely characterize $\nu$.

The following notation will be used:
$\Lambda(n)$ is the generalized horseshoe respecting the hole
$H_n$, $\Delta(n) = \cup_\ell \Delta_\ell(n)$ is the corresponding tower,
$F_n: \Delta(n) \to \Delta_n$ is the tower map,
$\pi_n: \Delta(n) \to M$ is
the projection, and $\tmu_n$ is the conditionally
invariant measure on $\Delta(n)$ that projects to $\mu_n$.

\medskip
\noindent (i) {\it Proof of $f$-invariance:} Let $S = \partial M \cup f^{-1}\partial M$.

\begin{lemma}
\label{lemma:singularity}
$\mu_\infty(S)=0$
\end{lemma}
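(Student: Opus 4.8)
The plan is to establish a \emph{uniform} (in $n$) upper bound on the $\mu_n$-mass of a small neighbourhood of $S$, and then pass to the weak limit. Write $N_\varepsilon(\cdot)$ for the $\varepsilon$-neighbourhood in $M$; since $\mu_n=\pi_{n*}\tmu_n$ we have $\mu_n(N_\varepsilon(S))=\tmu_n(\pi_n^{-1}N_\varepsilon(S))$, and the idea is to split $\pi_n^{-1}N_\varepsilon(S)$ according to the level of the tower $\Delta(n)$. The tail of the tower is immediately harmless: by Proposition~\ref{prop:uniform bounds}(ii), $\tmu_n(\bigcup_{\ell>L}\Delta_\ell)\le K\beta^{-L}\theta_0^L$ with $K$ independent of $n$, and since $\theta_0<\beta$ this is $<\delta/2$ once $L=L(\delta)$ is large.

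The work is in the finitely many low levels $\ell\le L$, and the key input there is geometric. Every connected $u$-curve of $M$ is contained in one of the cylinders $\Gamma_i\times[-\frac{\pi}{2},\frac{\pi}{2}]$, is there a strictly monotone graph $\varphi=\varphi(r)$ that cannot wrap around the cylinder, and is uniformly transversal to $S=\partial M\cup f^{-1}\partial M$ (Important Geometric Facts ($\dagger$) of Sect.~2.3). Hence such a curve meets each of the finitely many smooth branches of $S$ at most once, and near each crossing it spends length $O(\varepsilon)$ inside $N_\varepsilon(S)$; since $\pi_n$ is an isometry from each unstable leaf onto a $u$-curve of $M$, this yields a constant $C_S$, depending only on the table, with $\mu_\gamma\big(\gamma\cap\pi_n^{-1}N_\varepsilon(S)\big)\le C_S\varepsilon$ for every unstable leaf $\gamma$ of $\Delta(n)$. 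Now decompose $\tmu_n|_{\Delta_{\ell,j}}$ into its transverse measure and its conditional densities on unstable leaves: by Proposition~\ref{prop:uniform bounds}(i) the densities are $\le C_2\ra_n^{-\ell}$ with $C_2$ uniform, and by Proposition~\ref{prop:uniform bounds}(iii), $\ra_n\to1$, so $\ra_n^{-\ell}\le 2$ for $\ell\le L$ once $n$ is large; finally, the number of cylinders $\Delta_{\ell,j}$ with $\ell\le L$ is finite and (for $h$ small) bounded by some $N(L)$ independent of $n$, using the uniform combinatorial structure of Proposition~\ref{prop:horseshoe}. Integrating gives $\tmu_n\big(\pi_n^{-1}N_\varepsilon(S)\cap\bigcup_{\ell\le L}\Delta_\ell\big)\le 2C_2\,N(L)\,C_S\,\varepsilon$ for all large $n$.

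Combining the two estimates: given $\delta>0$, first fix $L=L(\delta)$ to control the tail, then $\varepsilon=\varepsilon(\delta)$ to make the low-level term $<\delta/2$, so that $\mu_n(N_\varepsilon(S))<\delta$ for every sufficiently large $n$. Since $N_\varepsilon(S)$ is open in the compact space $M$ and $\mu_n\to\mu_\infty$ weakly, the portmanteau theorem gives $\mu_\infty(S)\le\mu_\infty(N_\varepsilon(S))\le\liminf_n\mu_n(N_\varepsilon(S))\le\delta$, and letting $\delta\to0$ finishes the proof. The main obstacle I anticipate is not any single inequality but ensuring that \emph{every} constant entering the low-level bound — the conditional-density bounds, the transversality angle, and above all the count of cylinders on bounded levels — stays uniform in $n$ as $h_n\to0$; this is precisely what the uniform clauses of Propositions~\ref{prop:horseshoe} and~\ref{prop:uniform bounds} are there to supply, so no genuinely new difficulty is expected, but it is the point demanding the most care.
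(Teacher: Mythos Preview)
Your approach is correct in outline and reaches the right conclusion, but it takes a noticeably longer route than the paper's proof. The paper exploits a fact you did not use: by the very construction of the horseshoe (the definition of the sets $\Omega_n$ and $B^\pm_{\lambda_1,\delta_1}$ in Sect.~2.3, items 1--2), every point of $\pi_n(\Delta_\ell(n)) = f^\ell(\Lambda(n) \cap \{R>\ell\})$ lies at distance at least $\delta_1\lambda_1^{-\ell}$ from $S$. Consequently, for $\varepsilon<\delta_1$ the low levels $\ell \le -\log(\varepsilon/\delta_1)/\log\lambda_1$ are \emph{entirely disjoint} from $N_\varepsilon(S)$, and the whole bound reduces to a single application of the tail estimate in Proposition~\ref{prop:uniform bounds}(ii). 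This yields directly $\mu_n(N_\varepsilon(S)) \le C_3\varepsilon^\varsigma$ with $C_3,\varsigma$ independent of $n$, without any need to invoke transversality, density bounds on low levels, or a uniform cylinder count $N(L)$.

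Your argument, by contrast, must first fix $L$ to tame the tail and then shrink $\varepsilon$ to handle levels $\le L$, and the low-level estimate leans on several uniform constants at once --- in particular the uniformity in $n$ of the cylinder count $N(L)$, which is plausible from the construction but is not an explicit conclusion of Proposition~\ref{prop:horseshoe} and would need a short justification. The paper's route sidesteps all of this by recognising that the horseshoe was built precisely so that its iterates recede from $S$ at a geometric rate; what you flagged as ``the point demanding the most care'' simply does not arise.
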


\begin{proof} Let $\delta_1$ and $\lambda_1$ be as in Sect.~2.3, and let
$N_\ve(S)$ denote the $\ve$-neighborhood of $S$. We claim that there
exist constants $C_3, \varsigma > 0$ such that for $\ve<\delta_1$ and for all $n$,
$\mu_n(N_\ve(S)) \leq C_3 \ve^\varsigma$.
By the construction of $\Lambda=\Lambda(n)$, any $n$,
 $d(f^\ell(\Lambda), S) \geq \delta_1 \lambda_1^{-\ell}$.
 Thus $f^\ell(\Lambda) \cap N_\ve(S) = \emptyset$ for all
 $\ell  \le - \log (\ve/\delta_1) / \log \lambda_1$. Hence
\[
\mu_n(N_\ve(S)) \leq \; \sum_{\ell >  - \log (\ve/\delta_1) / \log \lambda_1}
\tmu_n(\Delta_\ell(n))\ ,
\]
which by Proposition~\ref{prop:uniform bounds}(ii) is
$\leq K (\beta^{-1}\theta_0)^{- \log (\ve/\delta_1) / \log \lambda_1}$, proving
the claim above with $C_3 = K/\delta_1$
and $\varsigma = \log (\beta \theta_0^{-1})/\log \lambda_1$.
Since $C_3$ and $\varsigma$ are independent of $n$, these bounds
pass to $\mu_\infty$, implying $\mu_\infty(S)=0$.
\end{proof}

Having established that $f$ is well defined $\mu_\infty$-a.e., we
now verify that $\mu_\infty$ is $f$-invariant: Let
$\varphi:M \to {\mathbb R}$ be a continuous function. Then
$$
\int (\varphi \circ f) d\mu_\infty = \lim_{n\to \infty} \int (\varphi \circ f) d\mu_n
=  \lim_{n\to \infty} \int \varphi \ d(f_*\mu_n)\ ,
$$
and
\begin{equation}
\int \varphi \ d(f_*\mu_n) = \int_{M \setminus H_n} \varphi \ d(f_*\mu_n)
+ \int_{H_n} \varphi \ d(f_*\mu_n)\ .
\label{eq:int}
\end{equation}
Since $(f_*\mu_n)|_{M \setminus H_n} = \mathring f_*\mu_n = \ra_n \mu_n$, the first integral on the
right side of (\ref{eq:int}) is equal to $\ra_n \int \varphi \, d\mu_n$, while the
absolute value of the second is bounded by
$ (1-\ra_n) |\varphi|_\infty$.
Since $\ra_n \to 1$ as $n \to \infty$,
the right side of
(\ref{eq:int}) tends to $\int \varphi \, d\mu_\infty$.

\medskip
\noindent {\it (ii) Absolutely continuous conditional measures on unstable leaves:}
Since the measures $\tmu_n$ do not live on the same space for different $n$,
a first task here is to find common domains in $M$ on which $(\pi_n)_*\tmu_n$
can be compared. In the constructions to follow, the discontinuity set refers to
the real discontinuity set of $f$, {\it not} the ones that include
 boundaries of holes (as was done in Sect.~3).

We choose a rectangular region
$\check Q$ slightly larger than $Q$ in Proposition 2.2, large enough that
$\check Q \supset \Lambda(n)$ for all $n$, and let $\check \Gamma^u$
denote the set of all homogeneous unstable curves connecting the
two components of $\partial^s \check Q$.
Let $\check V = \cup \{\gamma^u
\in \check \Gamma^u\}$. Then $\Lambda(n) \subset \check V$ for all
$n$, for $\gamma \cap \check Q \in \check \Gamma^u$ for
every $\gamma \in \Gamma^u(\Lambda(n))$ (defined using the enlarged
discontinuity set).
Now for all $n$, $(\pi_n)_*(\tilde \mu_n|_{\Delta_0(n)})$ is a sequence
of measures on $\check V$ with
absolutely continuous conditional measures on the elements of
$\check \Gamma^u$.  Moreover, the conditional densities are uniformly bounded
from above with a bound independent of $n$ (Proposition~\ref{prop:uniform bounds}(i)).
Let $\mu_{\infty,0}$ be a limit point of $(\pi_n)_*(\tilde \mu_n|_{\Delta_0(n)})$.
Assuming $\mu_{\infty,0}(\check V) >0$, these density bounds are inherited
by $\mu_{\infty,0}$. To show  $\mu_{\infty,0}(\check V) >0$, we will argue
there exists $b>0$ such that
$\tilde \mu_n(\Delta_0(n))>b$ for all $n$, and that is true because the
$\tilde \mu_n$ are probability measures,
there is a uniform lower bound on $\tilde \mu_n(\cup_{\ell <L} \Delta_\ell(n))$
for large enough $L$ (Proposition~\ref{prop:uniform bounds}(ii)), and
$\tilde \mu_n(\Delta_{\ell+1}(n)) \le \ra_n^{-1}\tilde \mu_n(\Delta_\ell(n))$.

For $\ell>0$, we define $\check Q_\ell$ to be the finite union of
$s$-subrectangles of $\check Q$ retained in $\ell$ steps in the construction
of $\Lambda$ {\it when $f$ has no holes}, i.e. roughly speaking,
$\check Q_\ell$ consists of points that stay away from
$S = \partial M \cup f^{-1} \partial M$ by a distance $ \ge \delta_1 \lambda_1^{-i}$
at step $i$. Let $\check V_\ell = \check V \cap \check Q_\ell$.
Then $\pi_n(\F_n^{-\ell} \Delta_\ell(n)) \subset \check V_\ell$ for all $n$.
In fact, for each $j$, $\pi_n(\F_n^{-\ell}\dlj(n))$ is contained in a connected
component of $\check Q_\ell$. The argument for $\mu_{\infty,0}$ can now
be repeated to conclude the existence of a limit point of
$(\pi_n \circ \F_n^{-\ell})_* (\tilde \mu_n|_{\Delta_\ell(n)})$ with
absolutely continuous conditional
measures on unstable leaves. Pushing all measures
forward by $f_*^\ell$, this gives a limit point $\mu_{\infty, \ell}$ of
$(\pi_n)_*(\tilde \mu_n|_{\Delta_\ell(n)})$  as $n \to \infty$ with the same property.

To proceed systematically, we perform a Cantor diagonal argument,
choosing a single subsequence $n_k$
with the property that for each $\ell \ge 0$,
$(\pi_{n_k})_*(\tilde \mu_{n_k}|_{\Delta_\ell(n_k)})$ converges to a measure
$\mu_{\infty,\ell}$ on $f^\ell \check V_\ell$. Finally, to conclude $\mu_\infty =
\sum_\ell \mu_{\infty,\ell}$, we need a tightness condition as the towers are
noncompact. This is given by Proposition~\ref{prop:uniform bounds}(ii).

\medskip
The proof of Theorem 4 is now complete.



\small

\end{document}